\newtheorem{theorem}{Theorem}[subsection]
\newtheorem{lemma}[theorem]{Lemma}
\newtheorem{lem}[theorem]{Lemma}
\newtheorem{proposition}[theorem]{Proposition}
\newtheorem{prop}[theorem]{Proposition}
\newtheorem{cor}[theorem]{Corollary}
\newtheorem{thm}[theorem]{Theorem}
\newtheorem{corollary}[theorem]{Corollary}
\theoremstyle{definition}
\newtheorem{definition}[theorem]{Definition}
\newtheorem{remark}[theorem]{Remark}
\newtheorem{rem}[theorem]{Remark}
\newtheorem{example}[theorem]{Example}
\newcommand\N{\mathbb N}
\newcommand\Z{\mathbb Z}
\newcommand\Q{\mathbb Q}
\newcommand\R{\mathbb R}
\newcommand\ph\varphi
\newcommand\ps\psi
\newcommand\ep\varepsilon
\newcommand\rh\varrho
\newcommand\al\alpha
\newcommand\be\beta
\newcommand\ga\gamma
\newcommand\om\omega
\newcommand\ta\tau
\renewcommand\th\vartheta
\newcommand\de\delta
\newcommand\ze\zeta
\newcommand\ch\chi
\newcommand\et\eta
\newcommand\io\iota
\newcommand\la\lambda
\newcommand\si\sigma
\newcommand\Ga\Gamma
\newcommand\De\Delta
\newcommand\Th\Theta
\newcommand\La\Lambda
\newcommand\Si\Sigma
\newcommand\Ph\Phi
\newcommand\Ps\Psi
\newcommand\Om\Omega
\newcommand\x{\ushort X}
\newcommand\ac{[\x]}
\newcommand\cx{[\x]}
\newcommand\rx{\R\ac}
\newcommand\RR{\R}
\newcommand\rxlin{\rx_1}
\newcommand\sos{\sum\rx^2}
\newcommand\nn{_{\ge0}}
\newcommand\pos{_{>0}}
\newcommand\m{\R^{m\times m}}
\newcommand\sm{S\R^{m\times m}}
\newcommand\smpd{S\R^{m\times m}_{\succ0}}
\newcommand\smpsd{S\R^{m\times m}_{\succeq0}}
\renewcommand\emptyset\varnothing
\newcommand\vecx[1]{{\footnotesize\vv{\ac}_#1}}
\newcommand\rad[1]{\sqrt{#1}}
\newcommand\rrad[1]{\sqrt[\displaystyle r]{#1}}
\newcommand\s[1]{s(#1)}
\DeclareMathOperator\Span{span}
\DeclareMathOperator\rank{rank}
\DeclareMathOperator\im{im}
\DeclareMathOperator\ev{ev}
\DeclareMathOperator\dist{dist}
\DeclareMathOperator\supp{supp}
\DeclareMathOperator\tr{tr}
\renewcommand{\setminus}{\smallsetminus}
\newcommand{\eop}{\hfill$\Box$}
\def\SRdd{S\R^{d \times d} }
\def\Rmm{\R^{m \times m} }
\def\cD{ {\mathfrak S} }
\def\bes{\begin{equation*}}
\def\ees{\end{equation*}}
\def\beq{\begin{equation} }
\def\eeq{\end{equation} }
\def\ben{\begin{enumerate} }
\def\een{\end{enumerate} }
\DeclareFontFamily{OT1}{manual}{}
\DeclareFontShape{OT1}{manual}{m}{n}{ <10> manfnt }{}
\title[Infeasibility certificates for linear matrix inequalities]{Infeasibility certificates for\\linear matrix inequalities}
\author[Igor Klep]{Igor Klep}
\address{Igor Klep, Univerza v Mariboru, Fakulteta za naravoslovje in matematiko,
Koro\v ska 160, SI--2000 Maribor, Slovenia, and Univerza v Ljubljani, Fakulteta za matematiko
in fiziko, Jadranska 21, SI--1111 Ljubljana, Slovenia}
\email{igor.klep@fmf.uni-lj.si}
\author[Markus Schweighofer]{Markus Schweighofer}
\address{Markus Schweighofer,
Universit\"at Konstanz,
Fachbereich Mathematik und Statistik,
78457 Konstanz, Allemagne}
\email{markus.schweighofer@uni-konstanz.de}
\subjclass[2010]{Primary 13J30,  15A22, 46L07, 90C22; Secondary 14P10, 15A48}
\date{25 August 2011}
\thanks{This research was supported through the programme ``Research in Pairs'' (RiP) by the Mathematisches Forschungsinstitut Oberwolfach in 2010.
The first author was also supported by the Slovenian Research Agency under Project no. J1-3608 and Program no. P1-0222. Part of the research
was done while the first author held a visiting professorship at the Universität Konstanz in 2011}
\keywords{linear matrix inequality, LMI, spectrahedron, semidefinite program, quadratic module, infeasibility, duality, complete positivity, Farkas' lemma}
\begin{document}
\begin{abstract}
Farkas' lemma is a fundamental result from \emph{linear programming} providing \emph{linear} certificates for infeasibility of systems of linear inequalities.
In \emph{semidefinite programming}, such linear certificates only exist for \emph{strongly} infeasible linear matrix inequalities. 
We provide \emph{nonlinear algebraic} certificates for \emph{all} infeasible linear matrix inequalities in the spirit of real algebraic geometry.
More precisely, we show that a linear matrix inequality $L(x)\succeq 0$ is infeasible if and only if $-1$ lies in the quadratic module associated to $L$.
We prove exponential degree bounds for the corresponding algebraic certificate. In order to get a polynomial size certificate, we use a more
involved algebraic certificate motivated by the real radical and Prestel's theory of semiorderings.
Completely different methods, namely complete positivity from operator algebras, are employed to consider linear matrix inequality domination.
\end{abstract}

\maketitle

\noindent
A \emph{linear matrix inequality} (LMI) is a condition of the form
\[
L(x) = A_0 + \sum_{i=1}^n x_i A_i \succeq0\qquad(x\in\R^n)
\]
where the $A_i$ are symmetric matrices of the same size and one is interested in the solutions $x\in\R^n$ making $L(x)\succeq0$, i.e., making
$L(x)$ into a positive semidefinite matrix.
The solution set to such an inequality is a closed convex semialgebraic subset of $\R^n$ called a {\em spectrahedron}. Optimization of linear
objective functions over spectrahedra is called \emph{semidefinite programming} (SDP) \cite{bv,to,wsv}. In this article, we are mainly concerned with the important
SDP feasibility problem: When is an LMI \emph{feasible}; i.e., when is there
an $x\in\R^n$ such that $L(x)\succeq0$?

Note that a diagonal LMI (all $A_i$ diagonal matrices) is just a (finite) system of (non-strict) linear inequalities. The solution set of such a linear
system is a (closed convex) polyhedron. Optimization of linear
objective functions over polyhedra is called \emph{linear programming} (LP).
The ellipsoid method developed by Shor, Yudin, Nemirovskii and Khachiyan
showed at the end of the 1970s for the first time that the LP feasibility problem (and actually the problem of solving LPs) can be solved in
(deterministically) polynomial time (in the bit model of computation assuming rational coefficients) \cite[Chapter 13]{sr}. Another breakthrough
came in the 1980s with the introduction of the more practical interior point methods by Karmarkar and their theoretical underpinning by
Nesterov and Nemirovskii \cite{nn,nem}.

The motivation to replace the prefix ``poly'' by ``spectra'' is to replace ``many'' values of linear polynomials (the diagonal values of $L(x)$) by  the ``spectrum'' of $L(x)$ (i.e., the set of its eigenvalues, all of which are real since $L(x)$ is symmetric). 
The advantage of LMIs over systems of linear inequalities (or of spectrahedra over polyhedra, and SDP over LP, respectively) is a considerable gain of
expressiveness which makes LMIs an important tool in many areas of applied and pure mathematics.
Many problems in control theory, system identification and signal processing can be formulated using
LMIs \cite{befb,par,sig}. Combinatorial optimization problems can often be modeled or approximated by SDPs \cite{go}.
LMIs also find application in real algebraic geometry for finding sums of squares decompositions of polynomials \cite{las,ma}.
Strongly related to this, there is a hierarchy of SDP approximations to polynomial optimization problems \cite{lau} consisting of the so-called
Lasserre moment relaxations. This hierarchy and related methods recast the field of polynomial optimization (where the word ``polynomial''
stands for polynomial objective and polynomial constraints). In this article, rather than trying to solve polynomial optimization problems by
using SDPs, we borrow ideas and techniques from real algebraic geometry and polynomial optimization in order to get new results
in the theory of semidefinite programming.

The price to pay for the increased expressivity of SDPs is that they enjoy some less good properties. The complexity of solving general
SDPs is a very subtle issue which is often downplayed. For applications in combinatorial optimization, it follows typically from the general theory of the
ellipsoid method \cite{sr} or interior point methods \cite{nn} that the translation into SDPs yields a polynomial time algorithm (see for instance
\cite[Section 1.9]{dk} for exact statements). However the complexity status of the LMI feasibility problem (the problem of deciding
whether a given LMI with rational coefficients has a solution) is largely unknown. What is known is essentially
only that (in the bit model) LMI feasibility lies
either in $\text{NP}\cap\text{co-NP}$ or outside $\text{NP}\cup\text{co-NP}$. This will also follow from our work below, but has been already proved by
Ramana \cite{ra} in 1997. The standard (Lagrange-Slater) dual of a semidefinite program works well when the feasible set is full-dimensional
(e.g. if there is $x\in\R^n$ with $L(x)\succ0$). However, in general strong duality can fail badly which is a serious problem since there is no easy
way of reducing to the full-dimensional case (cf.~the LMI feasibility problem mentioned above). Even the corresponding version of Farkas' lemma
fails for SDP.

Ramana's as well as our proof relies on a extension of the standard (Lagrange-Slater) dual of an SDP which can be produced
in polynomial time from the primal and for which strong duality (more precisely zero gap and dual attainment) always holds.
Ramana's extension is an encoding of the standard dual of a regularized primal SDP in the sense of Borwein and Wolkowicz
\cite{bw}. Our dual, although having some superficial similarity to Ramana's, relies on completely different ideas, namely
sums of squares certificates of nonnegativity. The ideas for this \emph{sums of squares dual} come from real algebraic geometry,
more precisely from sums of squares
representations and the Real Nullstellensatz \cite{ma, pd, sc}. We 
believe that this new connection will lead to further insights in the future.

The paper is organized as follows:
We fix terminology and notation in Section \ref{sec:not}.
Our first line of results is given in Section \ref{sec:qm},
where we give an algebraic characterization of infeasible
LMIs (see Theorem \ref{thm:main1} and Corollary \ref{cor:bounds}) involving 
the quadratic module associated to an LMI. Our characterization allows us to construct a new
LMI whose feasibility is equivalent to the infeasibility of the
original LMI. This new LMI is canonical from the viewpoint of positive polynomials.
However its size is exponential in the size of the primal, so 
in Section \ref{sec:ram} we use real algebraic geometry 
to  construct a polynomial
size LMI whose feasibility is equivalent to the infeasibility
of the original LMI, cf.~Theorem \ref{sosram}.
At the same time Theorem \ref{sosram} gives a new type of a 
linear Positivstellensatz characterizing linear polynomials nonnegative 
on a spectrahedron.
The article concludes with Section \ref{sec:hkm}, 
where we revisit the \cite{hkm} noncommutative (matricial) relaxation of 
an LMI and investigate how 
our duality theory of SDP pertains to (completely) positive maps.

\section{Notation and terminology}\label{sec:not}

We write $\N:=\{1,2,\dots\}$, $\Q$, and $\R$ for the sets of natural, rational, and real numbers, respectively. For any matrix $A$,
we denote by $A^*$ its transpose.

\subsection{Sums of squares}

Let $R$ be a (commutative unital) ring.
Then $SR^{m\times m}:=
\{A\in R^{m\times m}\mid A=A^*\}$ denotes the set of all \emph{symmetric} $m\times m$ matrices.
Examples of these include \emph{hermitian squares}, i.e.,
elements of the form $A^*A$ for some $A\in R^{m\times m}$.

Recall that a matrix $A\in\Rmm$ is called
\textit{positive semidefinite} (\emph{positive definite}) if it is
symmetric and $v^*Av\ge 0$ for all (column) vectors $v\in\R^m$, 
$A$ is \textit{positive definite} if it is positive semidefinite and invertible.
For real matrices $A$ and $B$ of the same size, we write $A\preceq B$
(respectively $A\prec B$) to express that $B-A$ is positive semidefinite
(respectively positive definite). We denote by $S\R^{m\times m}_{\succeq0}$ and $S\R^{m\times m}_{\succ0}$ the convex cone
of all positive semidefinite and positive definite
matrices of size $m$, respectively.

Let $\x=(X_1,\dots,X_n)$ be an $n$-tuple of 
commuting variables and $\R\cx$ the polynomial ring.
With $\R\cx_k$ we denote the vector space of all polynomials
of degree $\leq k$, and $\sos$ is the convex cone of all sums of squares (\emph{sos-polynomials}),
i.e.,
$$
\sos=\Big\{ \sum_{i=1}^r p_i^2 \mid r\in\N,\, p_i\in\R\cx\Big\}.
$$
A (real) \emph{matrix polynomial} is a matrix whose entries are polynomials from $\RR\cx$. It is \emph{linear} or \emph{quadratic} if its entries are from $\RR\cx_1$ or
$\RR\cx_2$, respectively. A matrix polynomial is called \emph{symmetric}
if it coincides with its transpose. An example of symmetric matrix polynomials that are of special interest to us are sums of hermitian
squares in $\RR\cx^{m\times m}$. They are called \emph{sos-matrices}. More explicitly,
$S\in\RR\cx^{m\times m}$ is an sos-matrix if the following equivalent
conditions hold:
\begin{enumerate}[\rm(i)]
\item $S=P^*P$ for some $s\in\N$ and some $P\in\RR\cx^{s\times m}$;
\item $S=\sum_{i=1}^rQ_i^*Q_i$ for some $r\in\N$ and $Q_i\in\RR\cx^{m\times m}$;
\item $S=\sum_{i=1}^sv_iv_i^*$ for some $s\in\N$ and $v_i\in\RR\cx^m$.
\end{enumerate}
Note that an sos-matrix $S\in\RR\cx^{m\times m}$ is positive semidefinite on 
$\R^n$
but not vice-versa, since e.g.~a polynomial nonnegative on $\R^n$ is not necessarily a sum of squares of polynomials \cite{ma,pd}.

For a comprehensive treatment of the theory of matrix polynomials we refer the reader to the book \cite{glr} and the references therein.

\subsection{Linear pencils, spectrahedra, and quadratic modules}

We use the term \emph{linear pencil} as a synonym and abbreviation for symmetric linear matrix polynomial.

Let $R$ be a (commutative unital) ring. We recall that in real algebraic geometry
a subset $M\subseteq R$ is called a \emph{quadratic module} in $R$ if it
contains $1$ and is closed under addition and multiplication with squares, i.e.,
$$1\in M,\quad M+M\subseteq M\quad\text{and}\quad a^2M\subseteq M\text{\ for all $a\in R$},$$
see for example \cite{ma}. A quadratic module $M\subseteq R$ is called \emph{proper} if $-1\not\in M$. Note that an improper
quadratic module $M$ in a ring $R$ with $\frac12\in R$ equals $R$ by the identity
\begin{equation}\label{id4}
4a=(a+1)^2-(a-1)^2\qquad\text{for all $a\in R$}.
\end{equation}
An LMI $L(x)\succeq0$ can be seen as the infinite family of simultaneous linear inequalities $u^*L(x)u\ge0$ ($u\in\R^m$).
In optimization,
when dealing with families of linear inequalities, one 
often considers the convex cone generated by them
(cf.~$C_L$ in the definition below). Real algebraic geometry handles 
\emph{arbitrary} polynomial inequalities and
uses the multiplicative structure of the polynomial ring. Thence
one considers more special types of convex cones like in our case
quadratic modules ($M_L$ from the next definition). One of the aims of this article is to show that it is advantageous to consider
quadratic modules for the study of LMIs. Since quadratic
modules are infinite-dimensional convex cones, we will later on also consider certain finite-dimensional truncations of them, see Subsection
\ref{subs:degbound}.

\begin{definition}\label{def:scm}
Let $L$ be a linear pencil of size $m$ in the variables $\x$. We introduce
\begin{align*}
S_L&:=\{x\in\R^n\mid L(x)\succeq0\}\subseteq\R^n,\\
C_L&:=\Big\{c+\sum_iu_i^*Lu_i\mid c\in\R\nn,\ u_i\in\R^m\Big\}\\
&~=\Big\{c+\tr(LS)\mid c\in\R\nn,\ S\in S\R^{m\times m}_{\succeq0}\Big\}
\subseteq\R\ac_1,\\
M_L&:=\Big\{s+\sum_iv_i^*Lv_i\mid s\in\sos,\ v_i\in\rx^m\Big\}\\
&~=\Big\{s+\tr(LS)\mid s\in\R\cx\text{ sos-polynomial},\ S\in\R\cx^{m\times m}\text{ sos-matrix}\Big\}
\subseteq\R\ac,
\end{align*}
and call them the \emph{spectrahedron} (or LMI set), the \emph{convex cone}, and the \emph{quadratic module}
associated to the linear pencil $L$, respectively. Call the \emph{linear matrix inequality} (LMI)
$$L(x)\succeq0\qquad(x\in\R^n),$$
or simply $L$, \emph{infeasible} if $S_L=\emptyset$. In this case, call it \emph{strongly infeasible} if
$$\dist\big(\{L(x)\mid x\in\R^n\},\ \smpsd\big)>0,$$ and \emph{weakly infeasible} otherwise.
Moreover, call $L$ \emph{feasible} if $S_L\neq\emptyset$. 
We say $L$ is \emph{strongly feasible} if
there is an $x\in\R^n$ such that $L(x)\succ0$ and \emph{weakly feasible} otherwise.
\end{definition}

Note that for any linear pencil $L$, each element of $M_L$ is a polynomial nonnegative on the spectrahedron $S_L$.
In general $M_L$ does not contain all polynomials nonnegative on $S_L$, e.g. when $L$ is diagonal and
$\dim S_L\ge 3$ \cite{sc}. For diagonal $L$, the quadratic module $M_L$ (and actually the convex cone $C_L$) contains however all \emph{linear} polynomials
nonnegative on the polyhedron $S_L$ by Farkas' lemma. For non-diagonal linear pencils
$L$ even this can fail, see Example \ref{new2} below.
To certify nonnegativity of a linear polynomial on a spectrahedron, we therefore employ more involved algebraic certificates
(motivated by the real radical), see Theorem \ref{sosram}.

\section{Spectrahedra and quadratic modules}\label{sec:qm}

In this section we establish algebraic certificates
for infeasibility and boundedness of a spectrahedron. They involve
classical Positivstellensatz-like certificates expressing $-1$ as a 
weighted sum of squares.

A quadratic module $M\subseteq\rx$ 
is said to be \textit{archimedean} if 
$$\forall f\in\rx \ \exists N\in\N:\; N-f^2 \in M.$$
Equivalently, there is an $N\in\N$ with $N\pm X_i\in M$ for $i=1,\ldots, n$;
see \cite[Corollary 5.2.4]{ma}.

\subsection{Quadratic modules describing non-empty bounded spectrahedra}

Obviously, if $M_L$ is archimedean for a linear pencil $L$, then
$S_L$ is bounded. In \cite{hkm} complete positivity was used
to deduce that for \emph{strictly feasible} linear pencils the converse
holds. Subsequently, a certain generalization of this result for \emph{projections} of
spectrahedra has been proved by other techniques in \cite{gn}.
In this section we will establish the result for arbitrary bounded $S_L$. We begin with the relatively
easy case of non-empty $S_L$ (possibly with empty interior).

\begin{lemma}\label{lem:compArch}
Let $L$ be a linear pencil with $S_L\neq\emptyset$. Then
$$
S_L\text{ is bounded}\quad\iff\quad M_L\text{ is archimedean.}
$$
\end{lemma}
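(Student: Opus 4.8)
The direction ``$\Leftarrow$'' is the easy one: if $M_L$ is archimedean, choose $N\in\N$ with $N-X_i^2\in M_L$ for $i=1,\dots,n$; as every element of $M_L$ is nonnegative on $S_L$, this forces $x_i^2\le N$ on $S_L$ for each $i$, so $S_L$ is bounded. The plan is therefore to concentrate on ``$\Rightarrow$'': assuming $S_L\ne\emptyset$ and $S_L$ bounded, I would produce an archimedean certificate for $M_L$.

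The strategy is to reduce to the \emph{strictly feasible} case, which is exactly the theorem of \cite{hkm} quoted above. Fix any $\ep>0$ and set $L_\ep:=L+\ep I_m$, a linear pencil of the same size $m$. First I would note that $L_\ep$ is strictly feasible: choosing $x_0\in S_L$ yields $L_\ep(x_0)=L(x_0)+\ep I_m\succeq\ep I_m\succ0$. Second, $S_{L_\ep}$ is nonempty (it contains $S_L$) and still bounded: by standard convex analysis the recession cone of a nonempty spectrahedron $\{x\mid L(x)\succeq0\}$ equals $\{d\in\R^n\mid L_0(d)\succeq0\}$, where $L_0$ denotes the homogeneous degree-one part; since $L_\ep$ has the same homogeneous part as $L$ and $S_L$ is nonempty and bounded, this recession cone is $\{0\}$, so the closed convex set $S_{L_\ep}$ is bounded. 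By \cite{hkm}, $M_{L_\ep}$ is archimedean.

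It remains to transfer archimedeanity from $L_\ep$ back to $L$; this is the step where the reduction must be set up in the right direction. The key observation is the inclusion $M_{L_\ep}\subseteq M_L$: for a sum of squares $s$ and vectors $v_i\in\rx^m$,
\[
s+\sum_i v_i^*L_\ep v_i=\Big(s+\sum_i v_i^*v_i\Big)+\sum_i v_i^*Lv_i\in M_L ,
\]
because $s+\sum_i v_i^*v_i$ is again a sum of squares. Since any quadratic module containing an archimedean quadratic module is itself archimedean, we conclude that $M_L$ is archimedean.

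I do not expect a serious obstacle here: modulo the cited result of \cite{hkm} --- whose proof, via complete positivity, is the genuinely hard input and is what makes the present lemma ``relatively easy'' --- the argument is soft. The only point requiring care is the direction of the perturbation: passing from $L$ to $L+\ep I_m$ simultaneously buys strict feasibility, preserves boundedness (the recession cone does not see the constant term), and \emph{shrinks} the quadratic module, so that archimedeanity propagates back to $M_L$.
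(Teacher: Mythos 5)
Your proof is correct, but it takes a genuinely different route from the paper. You reduce to the strictly feasible case by the perturbation $L\mapsto L_\ep=L+\ep I_m$, observing that this does not change the recession cone of the spectrahedron (hence preserves boundedness), makes the pencil strictly feasible at any $x_0\in S_L$, and shrinks the quadratic module via the correct inclusion $M_{L_\ep}\subseteq M_L$ (since $v_i^*L_\ep v_i=v_i^*Lv_i+\ep v_i^*v_i$ and $\ep\sum_i v_i^*v_i$ is again a sum of squares). You then invoke the strictly feasible case proved in \cite{hkm} by complete positivity. The paper instead gives a self-contained, elementary argument: it covers $\partial B$ by open sets on which some $u_x^*L(\cdot)u_x<0$, extracts finitely many linear polynomials $\ell_i\in C_L$ cutting out a bounded polyhedron $S\supseteq S_L$, and then applies the classical Farkas lemma to $S$ to produce $N\pm X_i\in C_L\subseteq M_L$. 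The trade-off is clear: your argument is shorter and conceptually slick, but it imports the operator-algebraic machinery of \cite{hkm} as a black box, whereas one of the points of this paper is precisely to replace that machinery by elementary real-algebraic tools (Farkas plus compactness), which then also scale up to the more delicate empty case $S_L=\emptyset$ in Corollary \ref{cor:compArch}. Within the scope of this particular lemma ($S_L\neq\emptyset$), your reduction is valid and complete.
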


\begin{proof}
The direction $(\Leftarrow)$ is obvious as remarked above.
Let us consider the converse.
We first establish the existence of finitely many linear polynomials
in $C_L$ certifying the boundedness of $S_L$.

There is a ball $B\subseteq\R^n$ with $S_L\subseteq B$ and $S_L\cap\partial B=\emptyset$. For every $x\in\partial B$ there is a vector $u\in\R^n$ with 
\beq\label{eq:vecBord}
u^*L(x)u<0.
\eeq
 By continuity, \eqref{eq:vecBord} holds for all $x$ in a neighborhood
$U_x$ of $x$. From $\{U_x\mid x\in\partial B\}$ we extract by compactness a finite
subcovering 
$\{U_{x_1},\ldots, U_{x_r}\}$ of $\partial B$. Let $\ell_i:= u_i^*Lu_i\in\rx_1$ and
$$
S:=\{x\in\R^n\mid \ell_1(x)\geq0,\ldots,\ell_r(x)\geq0\}.
$$
Clearly, $S_L\subseteq S$ and $S\cap \partial B=\emptyset$. Since $S_L$ is non-empty by hypothesis
and contained in $B$, it follows that $S$ contains a point of
$B$. But then it follows from the convexity of $S$ together with $S\cap \partial B=\emptyset$ that $S\subseteq B$. In particular, $S$ is bounded.

Now every $\ell_i\in C_L\subseteq M_L$. Hence the quadratic module $M$
generated by the $\ell_i$ is contained in $M_L$. 
Choose $N\in\N$ with $N\pm X_i>0$ on $S$ for all $i$. 
Fix an $i$ and $\de\in\{-1,1\}$.
The system of linear
inequalities
$$
-N + \de x_i \geq 0, \ell_1(x)\geq0,\ldots,\ell_r(x)\geq0
$$
is infeasible. Hence by Farkas' lemma \cite{fa}, 
there are $\al_j\in\R_{\geq0}$
satisfying
\beq\label{eq:far}
-1=\al_0(-N+\de X_i)+\al_1\ell_1+\cdots+\al_r\ell_r.
\eeq
Note $\al_0\neq0$ since $S\neq\emptyset$. Rearranging terms
in \eqref{eq:far} yields $N-\de X_i\in C_L$. Since $i$ and $\de$ were arbitrary and
$C_L\subseteq M_L$, we conclude that $M_L$ is archimedean.
\end{proof}

Surprisingly, establishing Lemma \ref{lem:compArch} for
\emph{empty} spectrahedra is more involved and will occupy us in
the next subsection.

\subsection{Quadratic modules describing the empty spectrahedron}

The following is an extension of Farkas' lemma from LP to SDP due to Sturm \cite[Lemma 2.18]{st}.
We include its simple proof based on a Hahn-Banach separation argument.

\begin{lemma}[Sturm]\label{sturm}
A linear pencil $L$ is strongly infeasible if and only if $-1\in C_L$.
\end{lemma}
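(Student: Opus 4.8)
The plan is to prove both directions via a Hahn–Banach separation argument in the finite-dimensional space $\SRmm$ equipped with the trace inner product $\langle A,B\rangle = \tr(AB)$.

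For the direction $(\Leftarrow)$: Suppose $-1 \in C_L$, so we can write $-1 = c + \tr(LS)$ for some $c \in \R\nn$ and $S \in \smpsd$, viewing this as an identity of affine-linear polynomials in $\x$. Evaluating at any $x \in \R^n$ gives $-1 = c + \tr(L(x)S)$ with $c \ge 0$, hence $\tr(L(x)S) \le -1 < 0$. Since $S \succeq 0$, this forces $L(x) \not\succeq 0$ for every $x$, so $L$ is infeasible. To upgrade this to \emph{strong} infeasibility, note that the affine map $x \mapsto L(x)$ has image an affine subspace $\cL := \{L(x)\mid x\in\R^n\}$ of $\SRmm$; the linear functional $A \mapsto \tr(AS)$ is $\le -1$ on all of $\cL$ but $\ge 0$ on $\smpsd$, so it strictly separates $\cL$ from the cone $\smpsd$ with a positive gap, which is exactly the definition of $\dist(\cL,\smpsd) > 0$, i.e.\ strong infeasibility.

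For the direction $(\Rightarrow)$: Suppose $L$ is strongly infeasible, i.e.\ $\dist(\cL, \smpsd) > 0$ where $\cL$ is the (closed, affine, hence convex) image of $L$ and $\smpsd$ is a closed convex cone; moreover $\cL$ is disjoint from $\smpsd$. Since the two sets are closed, convex, disjoint, and have positive distance, a Hahn–Banach separation theorem produces a linear functional on $\SRmm$ — which by self-duality of the trace form has the shape $A\mapsto\tr(AS)$ for some $S\in\SRmm$ — and a constant strictly separating them. Because $\smpsd$ is a cone, the functional must be $\ge 0$ on $\smpsd$, which by self-duality of the PSD cone forces $S \succeq 0$; and after scaling we may arrange $\tr(AS) \le -1$ for all $A \in \cL$, while $\tr(BS) \ge 0$ for all $B\in\smpsd$ (in particular $\tr(0\cdot S)=0$ is consistent with the gap). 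Then for the polynomial identity: $\tr(L(x)S)$ is an affine-linear polynomial in $\x$ taking values $\le -1$ on all of $\R^n$, so $c := -1 - \tr(LS) \in \R\ac_1$ takes only nonnegative values on $\R^n$; being affine-linear and nonnegative everywhere, $c$ must be a nonnegative \emph{constant}, $c \in \R\nn$. Rearranging gives $-1 = c + \tr(LS) \in C_L$.

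The main obstacle is getting the separation argument to deliver \emph{strong} separation with the constant normalized to $-1$ and, simultaneously, to force $S$ to be positive semidefinite (not merely nonzero): one must invoke the version of Hahn–Banach that separates a point from a disjoint closed convex set with positive distance (applied e.g.\ to $\cL - \smpsd$, which is convex and, crucially, does not contain $0$ and stays bounded away from it), and then use that a linear functional bounded below on a cone must be nonnegative on it, combined with the self-duality $\smpsd^\vee = \smpsd$ under the trace pairing. The remaining step — observing that an everywhere-nonnegative affine-linear polynomial is a nonnegative constant — is elementary.
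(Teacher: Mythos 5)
Your proof is correct and takes essentially the same Hahn--Banach separation approach as the paper, identifying the separating functional with a matrix $S\succeq0$ via the trace pairing and self-duality of the PSD cone. The only cosmetic difference is that you apply separation to $\cL-\smpsd$ and derive the constancy of $c=-1-\tr(LS)$ from the observation that a nonnegative affine-linear polynomial is constant, whereas the paper separates $\cL$ from $\smpsd$ directly and deduces $\ell(A_i)=0$ for $i\geq1$ from boundedness of $\ell$ on the affine subspace $\cL$; these are equivalent phrasings of the same step.
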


\begin{proof}
Suppose
$$L=A_0+\sum_{i=1}^n X_iA_i$$
is strongly infeasible, $A_i\in\sm$. Then the non-empty convex sets $\{L(x)\mid x\in\R^n\}$
and $\smpsd$ can be \emph{strictly}
separated by an affine hyperplane (since their Minkowski sums with a small ball are still disjoint and can therefore be separated
\cite[Chapter III, Theorem 1.2]{ba1}).
This means that there is a non-zero linear form $$\ell\colon\sm\to\R$$
and $\alpha\in\R$
with $\ell(\smpsd)\subseteq\R_{>\alpha}$ and
$\ell(\{L(x)\mid x\in\R^n\})\subseteq\R_{<\alpha}$.
Choose
$B\in\sm$ such that $$\ell(A)=\tr(AB)$$ for all $A\in\sm$.
Since $\ell(\smpsd)$ is bounded from below, by the self-duality of the
convex cone of positive semidefinite matrices, $0\neq B\succeq0$.
Similarly, we obtain $\ell(A_i)=0$
for $i\in\{1,\dots,n\}$. Note that $\al<0$ since $0=\ell(0)\in\R_{>\alpha}$ so we can assume
$\ell(A_0)=-1$ by scaling.
Writing $B=\sum_i u_iu_i^*$ with $u_i\in\R^m$, we obtain
$$
-1= \ell(A_0)=\ell(L(x))= \tr (L(x)\sum_i u_iu_i^*)=\sum_i u_i^* L(x)u_i.
$$
for all $x\in\R^n$.
Hence $-1=\sum_i u_i^*Lu_i\in C_L$.

Conversely, if $-1\in C_L$, i.e., $-1=c+\sum_i u_i^*Lu_i$ for some $c\ge0$ and $u_i\in\R^m$, then with $B:=\sum_iu_iu_i^*\in\smpsd$ we obtain a linear form
$$\ell\colon\sm\to\R, \quad A\mapsto \tr(AB)$$
satisfying $\ell(\smpsd)\subseteq\R_{\geq0}$ and $\ell(\{L(x)\mid x\in\R^n\})=\{-1-c\}\subseteq\R_{\le-1}$. So $L$ is strongly infeasible.
\end{proof}

\begin{lemma}\label{wind}
Let $L$ be an infeasible linear pencil of size $m$. The following are equivalent:
\begin{enumerate}[{\rm(i)}]
\item $L$ is weakly infeasible;\label{wind1}
\item $S_{L+\ep I_m}\neq\emptyset$ for all $\ep>0$.\label{wind2}
\end{enumerate}
\end{lemma}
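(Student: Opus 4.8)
The plan is to reduce the statement to one elementary fact about the Loewner order and then simply unwind the definitions; I do not expect a genuine obstacle here, the whole content being the translation between ``$\{L(x)\mid x\in\R^n\}$ comes arbitrarily close to $\smpsd$'' and ``$L$ becomes feasible after an arbitrarily small enlargement of its constant term''. The fact I will use is that for $A\in\sm$, $P\in\smpsd$ and $\ep>0$,
\[
\|A-P\|<\ep\quad\Longrightarrow\quad A\succeq-\ep I_m,
\]
where $\|\cdot\|$ denotes the operator norm, since $v^*Av=v^*Pv+v^*(A-P)v\ge-\|A-P\|>-\ep$ for every unit vector $v\in\R^m$. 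Because all norms on $\sm$ are equivalent, the distance appearing in Definition \ref{def:scm} does not depend on the chosen norm, and I will compute it with respect to the operator norm.

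For \ref{wind1}$\Rightarrow$\ref{wind2}: since $L$ is infeasible, weak infeasibility is by definition exactly the statement $\dist(\{L(x)\mid x\in\R^n\},\smpsd)=0$. Hence there are sequences $(x_k)$ in $\R^n$ and $(P_k)$ in $\smpsd$ with $\|L(x_k)-P_k\|\to 0$. Given $\ep>0$, I pick $k$ with $\|L(x_k)-P_k\|<\ep$; the displayed implication then yields $L(x_k)+\ep I_m\succeq 0$, i.e.\ $x_k\in S_{L+\ep I_m}$, so $S_{L+\ep I_m}\neq\emptyset$.

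For \ref{wind2}$\Rightarrow$\ref{wind1}: for every $\ep>0$ I choose some $x_\ep\in S_{L+\ep I_m}$, so that $L(x_\ep)+\ep I_m\in\smpsd$; since $\|L(x_\ep)-(L(x_\ep)+\ep I_m)\|=\ep$, the distance between $\{L(x)\mid x\in\R^n\}$ and $\smpsd$ is at most $\ep$. Letting $\ep\to 0$ shows this distance is $0$, so $L$ is not strongly infeasible; being infeasible by hypothesis, it is weakly infeasible.

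An alternative route for \ref{wind2}$\Rightarrow$\ref{wind1} avoids distances via Lemma \ref{sturm}: if $L$ were strongly infeasible, then $-1=c+\sum_i u_i^*Lu_i$ with $c\in\R\nn$ and $u_i\in\R^m$ not all zero, whence $\sum_i u_i^*(L+\ep I_m)u_i=-1-c+\ep\sum_i\|u_i\|^2$ is a negative constant for $\ep$ small enough; evaluating at any point of $S_{L+\ep I_m}$ gives a nonnegative number, so $S_{L+\ep I_m}=\emptyset$ for such $\ep$, contradicting \ref{wind2}. The distance argument above seems the most transparent, though, so that is the one I would write out; the only (minor) point needing care is the passage between operator-norm estimates and the Loewner order.
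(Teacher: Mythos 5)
Your proof is correct and follows essentially the same route as the paper's: both directions pass through the operator norm (noting norm-equivalence on the finite-dimensional space $\sm$), and the only substantive step is exactly your displayed implication $\|A-P\|<\ep\Rightarrow A\succeq-\ep I_m$, which the paper uses implicitly in the same way. Your alternative route via Lemma~\ref{sturm} is a nice extra observation but, as you say, unnecessary.
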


\begin{proof}
Since all norms on a finite-dimensional vector space are equivalent, we can use the operator norm on $\m$.

Suppose that \eqref{wind1} holds and $\ep>0$ is given. Choose $A\in\smpsd$ and $x\in\R^n$ with $\|A-L(x)\|<\ep$.
Then $L(x)+\ep I_m\succeq0$, i.e., $x\in S_{L+\ep I_m}$.

Conversely, suppose that \eqref{wind2} holds. To show that $$\dist\big(\{L(x)\mid x\in\R^n\},\ \smpsd\}\big)=0,$$ we let $\ep>0$ be given and have to find
$A\in\smpsd$ and $x\in\R^n$ with $$\|L(x)-A\|\le\ep.$$ 
But this is easy: choose $x\in\R^n$ with $L(x)+\ep I_m\succeq0$, and set $A:=L(x)+\ep I_m$.
\end{proof}

The following lemma is due to Bohnenblust \cite{bo} (see also \cite[Theorem 4.2]{ba2} for an easier accessible reference). While Bohnenblust gave a non-trivial bound on the number of terms that are really
needed to test condition \eqref{bohne1} below, we will not need this improvement
and therefore take the trivial bound $m$. Then the proof becomes easy and we include
it for the convenience of the reader.

\begin{lemma}[Bohnenblust]\label{bohne}
For $A_1,\dots,A_n\in S\R^{m\times m}$ the following are equivalent:
\begin{enumerate}[\rm(i)]
\item Whenever $u_1,\dots,u_m\in\R^m$ with $\sum_{i=1}^mu_i^*A_ju_i=0$ for all
$j\in\{1,\dots,n\}$, then $u_1=\dots=u_m=0$;\label{bohne1}
\item $\Span(A_1,\dots,A_n)$ contains a positive definite matrix.\label{bohne2}
\end{enumerate}
\end{lemma}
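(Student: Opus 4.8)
The plan is to prove this by establishing the contrapositive of each implication via a separation/duality argument in the space $\SRmm$. First observe that condition (i) is precisely the statement that the linear map
\[
\Phi\colon (\R^m)^m\to\R^n,\qquad (u_1,\dots,u_m)\mapsto\Big(\sum_{i=1}^m u_i^*A_1u_i,\ \dots,\ \sum_{i=1}^m u_i^*A_nu_i\Big)
\]
has trivial zero set in the sense that the only tuple mapping to $0$ is the zero tuple. Equivalently, writing $B=\sum_{i=1}^m u_iu_i^*$, which ranges exactly over $\smpsd$ as the $u_i$ range over $\R^m$ (every positive semidefinite matrix of size $m$ is a sum of at most $m$ rank-one matrices), condition (i) says: the only $B\in\smpsd$ with $\tr(A_jB)=0$ for all $j$ is $B=0$. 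So (i) is equivalent to
\[
\smpsd\cap\{A_1,\dots,A_n\}^{\perp}=\{0\},
\]
where $\perp$ is taken with respect to the trace inner product on $\SRmm$.

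For (i)$\Rightarrow$(ii): I would argue by contradiction. Suppose $V:=\Span(A_1,\dots,A_n)$ contains no positive definite matrix, i.e.\ $V\cap\smpd=\emptyset$. Since $\smpd$ is a nonempty open convex cone and $V$ is a subspace, the Hahn–Banach separation theorem gives a nonzero linear functional on $\SRmm$, represented as $A\mapsto\tr(AB)$ for some $0\neq B\in\SRmm$, with $\tr(AB)\ge 0$ for all $A\in\overline{\smpd}=\smpsd$ and $\tr(AB)\le 0$ for all $A\in V$; since $V$ is a subspace the latter forces $\tr(A_jB)=0$ for all $j$. The condition $\tr(AB)\ge0$ for all $A\in\smpsd$ together with self-duality of the positive semidefinite cone yields $B\succeq0$. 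Thus $B$ is a nonzero element of $\smpsd\cap\{A_1,\dots,A_n\}^\perp$, contradicting the reformulation of (i). Hence (ii) holds.

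For (ii)$\Rightarrow$(i): if $P\in V$ is positive definite, write $P=\sum_{j=1}^n c_jA_j$. For any $u_1,\dots,u_m\in\R^m$ with $\sum_{i=1}^m u_i^*A_ju_i=0$ for every $j$, set $B=\sum_{i=1}^m u_iu_i^*\succeq0$. Then
\[
0=\sum_{j=1}^n c_j\sum_{i=1}^m u_i^*A_ju_i=\sum_{i=1}^m u_i^*Pu_i=\tr(PB),
\]
and since $P\succ0$ while $B\succeq0$, positivity of the trace pairing of a positive definite with a positive semidefinite matrix forces $B=0$, whence each $u_i=0$.

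The only genuinely delicate point is making the separation in the first implication \emph{strict enough} to conclude $\tr(A_jB)=0$ rather than merely a one-sided inequality; this is handled cleanly by exploiting that $V$ is a \emph{linear subspace} (so a supporting functional nonpositive on $V$ must vanish on $V$) and that $\smpd$ is open with $V\cap\smpd=\emptyset$, so ordinary (not even strict) separation of a subspace from an open convex set suffices. Everything else is the standard self-duality of $\smpsd$ and the rank-one decomposition of positive semidefinite matrices, already used in the proof of Lemma~\ref{sturm}.
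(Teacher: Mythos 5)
Your proof is correct and follows essentially the same route as the paper: reformulate (i) as $\smpsd\cap\{A_1,\dots,A_n\}^\perp=\{0\}$ via the identification $B=\sum_i u_iu_i^*$, then for (i)$\Rightarrow$(ii) separate $\Span(A_1,\dots,A_n)$ from $\smpd$ by a hyperplane through the origin, represent the functional by a trace pairing, and use self-duality of $\smpsd$ to get the contradiction. The only cosmetic difference is that you spell out the (ii)$\Rightarrow$(i) direction, which the paper dismisses as trivial.
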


\begin{proof}
It is trivial that \eqref{bohne2} implies \eqref{bohne1}. To prove that \eqref{bohne1} implies \eqref{bohne2}, note that
$\smpsd=\{\sum_{i=1}^mu_iu_i^*\mid u_1,\dots,u_m\in\R^m\}$ and $\sum_{i=1}^mu_i^*Au_i=\tr(A\sum_{i=1}^mu_iu_i^*)$ for all
$u_1,\dots,u_m\in\R^m$. The hypotheses thus say exactly that, given any $B\in\smpsd$, we have 
\beq\label{eq:bbl1}
\tr(A_1B)=\dots=\tr(A_nB)=0\implies B=0.
\eeq
Now suppose that $\Span(A_1,\dots,A_n)\cap\smpd=\emptyset$. By the standard separation theorem for two non-empty disjoint convex sets
(see for example \cite[Chapter III, Theorem 1.2]{ba1}), $\Span(A_1,\dots,A_n)$ and $\smpd$ can be separated by a hyperplane (the separating affine
hyperplane must obviously contain the origin). Therefore there is a non-zero linear form $L\colon\sm\to\R$ with 
$$L(\smpd)\subseteq\R\nn\quad\text{and}\quad
L(\Span(A_1,\dots,A_n))\subseteq\R_{\le0}.$$ 
Then of course $L(\smpsd)\subseteq\R\nn$ and $L(\Span(A_1,\dots,A_n))=\{0\}$. Now choose
$B\in\sm$ such that 
$$L(A)=\tr(AB)\quad\text{for all}\quad A\in\sm.$$ 
Then $B\neq 0$, $B\in\smpsd$ and $\tr(A_1B)=\dots=\tr(A_nB)=0$,
contradicting \eqref{eq:bbl1}.
\end{proof}

\begin{lemma}\label{kaffeebohne}
Let $L$ be a linear pencil of size $m$ which is either weakly infeasible or weakly feasible.
Then there are $k\ge1$ and $u_1,\dots,u_k\in\R^m\setminus\{0\}$
such that $\sum_{i=1}^ku_i^*Lu_i=0$.
\end{lemma}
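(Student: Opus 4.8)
The plan is to deduce the statement from Bohnenblust's lemma (Lemma~\ref{bohne}), applied to the full tuple $A_0,A_1,\dots,A_n$ of coefficients of $L=A_0+\sum_{i=1}^nX_iA_i$ (and not just to $A_1,\dots,A_n$). First I would record the following translation: for vectors $u_1,\dots,u_k\in\R^m$ the polynomial $\sum_{i=1}^ku_i^*Lu_i$ equals $\sum_iu_i^*A_0u_i+\sum_{j=1}^nX_j\bigl(\sum_iu_i^*A_ju_i\bigr)$, so it is the zero polynomial if and only if $\tr(A_jB)=0$ for all $j\in\{0,1,\dots,n\}$, where $B:=\sum_iu_iu_i^*\in\smpsd$. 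Since every element of $\smpsd$ has the form $\sum_{i=1}^mu_iu_i^*$, and a nonzero such $B$ admits a representation with all $u_i\neq0$ (spectral decomposition), the conclusion of the lemma is equivalent to the existence of a \emph{nonzero} $B\in\smpsd$ with $\tr(A_0B)=\dots=\tr(A_nB)=0$. By Lemma~\ref{bohne}, the non-existence of such a $B$ means exactly that $\Span(A_0,\dots,A_n)$ contains a positive definite matrix. Hence it suffices to prove the contrapositive: if $\Span(A_0,\dots,A_n)\cap\smpd\neq\emptyset$, then $L$ is strongly feasible or strongly infeasible (and in particular neither weakly feasible nor weakly infeasible).

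To prove this implication, fix $P=\sum_{j=0}^nc_jA_j\succ0$ and distinguish cases according to $\Span(A_1,\dots,A_n)$. If $\Span(A_1,\dots,A_n)$ contains some $Q=\sum_{j=1}^nq_jA_j\succ0$, then $L(tq)=A_0+tQ\succ0$ for all sufficiently large $t$, so $L$ is strongly feasible. Otherwise $\Span(A_1,\dots,A_n)\cap\smpd=\emptyset$; then $P\notin\Span(A_1,\dots,A_n)$, so $c_0\neq0$, and after rescaling $P$ I may assume $c_0\in\{1,-1\}$. If $c_0=1$ then $L(c_1,\dots,c_n)=P\succ0$ and $L$ is strongly feasible. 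If $c_0=-1$ then $L(-c_1,\dots,-c_n)=-P$, and for every $x\in\R^n$ and every $\ep\ge0$ one can write $L(x)+\ep I_m=R-P+\ep I_m$ with $R:=\sum_{j=1}^n(x_j+c_j)A_j\in\Span(A_1,\dots,A_n)$. As $R$ is not positive definite, there is a unit vector $v$ with $v^*Rv\le0$, hence $v^*(L(x)+\ep I_m)v\le\ep-\lambda_{\min}(P)$. Choosing $\ep\in\bigl(0,\lambda_{\min}(P)\bigr)$ gives $S_{L+\ep I_m}=\emptyset$, so $L$ is infeasible and, by Lemma~\ref{wind}, not weakly infeasible; thus $L$ is strongly infeasible.

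I expect the genuinely delicate point to be excluding \emph{weak} infeasibility in the last case: this is why the argument exhibits an $\ep$-perturbation $L+\ep I_m$ that is still infeasible, rather than merely checking $S_L=\emptyset$. The reduction in the first paragraph is routine once one observes that Bohnenblust must be applied to $A_0,\dots,A_n$ (so that the constant term of $L$ is treated on the same footing as its linear coefficients); the real work behind that reduction is the separation argument inside the proof of Lemma~\ref{bohne}.
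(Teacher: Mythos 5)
Your proof is correct and rests on the same two ingredients as the paper's argument — Bohnenblust's lemma applied to the full tuple $A_0,\dots,A_n$, and Lemma~\ref{wind} — but the detailed argument in the crucial case is genuinely different. The paper argues by contradiction: from Bohnenblust it gets $x_0A_0+\dots+x_nA_n\succ0$, rules out $x_0\ge0$ (either case gives strong feasibility at once), normalizes $x_0=-1$, then \emph{uses the hypothesis} via Lemma~\ref{wind} to pick $y\in S_{L+\ep I_m}$ for a suitably small $\ep$ and shows $L(x+2y)\succ0$, i.e.\ strong feasibility again — contradiction. You instead prove the clean contrapositive, ``$\Span(A_0,\dots,A_n)\cap\smpd\neq\emptyset$ implies $L$ is strongly feasible or strongly infeasible,'' as a self-contained statement that does not touch the hypothesis until the very end: in the case $c_0<0$ with $\Span(A_1,\dots,A_n)\cap\smpd=\emptyset$ you exhibit $S_{L+\ep I_m}=\emptyset$ for every $\ep<\lambda_{\min}(P)$ and invoke Lemma~\ref{wind} to conclude \emph{strong infeasibility}, rather than producing a feasible point and concluding strong feasibility. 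Your extra case split on whether $\Span(A_1,\dots,A_n)$ meets $\smpd$ is what forces $c_0\neq0$ and controls the sign of $v^*Rv$ uniformly in $x$; the paper avoids that split by controlling the sign of $x_0$ directly and relying on the hypothesis for the remaining case. Both routes are sound. Yours is marginally longer but isolates the slightly sharper reformulation that, with no assumption on $L$ at all, $\Span(A_0,\dots,A_n)$ contains a positive definite matrix if and only if $L$ is strongly feasible or strongly infeasible — a tidy characterization worth keeping in mind.
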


\begin{proof}
Assume that the conclusion is not true.
By Lemma \ref{bohne} there are hence $x_0,x_1,\dots,x_n\in\R$ such that $$x_0A_0+x_1A_1+\dots+x_nA_n\succ0.$$
Of course it is impossible that $x_0>0$ since otherwise $L(0)\succ0$.
Also $x_0=0$ is excluded (since otherwise
$L(cx_1,\dots,cx_n)\succ0$ for $c>0$ large enough). Hence without loss of generality $x_0=-1$, i.e.,
$x_1A_1+\dots+x_nA_n\succ A_0$. Choose $\ep>0$ such that
$$x_1A_1+\dots+x_nA_n\succ A_0+2\ep I_m.$$ By Lemma \ref{wind}, we can choose some $y\in S_{L+\ep I_m}$. But then
$$A_0+(x_1+2y_1)A_1+\dots+(x_n+2y_n)A_n\succ2(A_0+\ep I_m+y_1A_1+\dots+y_nA_n)\succeq0,$$ contradicting the
hypotheses.
\end{proof}

We come now to our first main theorem which is a version of Farkas' lemma for SDP which unlike Lemma \ref{sturm} does not only work for
strongly but also for weakly infeasible linear pencils. The price we pay is that we 
have to replace the finite-dimensional convex cone $C_L$ by the
infinite-dimensional quadratic module $M_L$. 
We will spend most of the sequel to refine and tame this result,
cf.~Theorems \ref{thm:bounds1} and
\ref{sosram} below.

\begin{theorem}\label{thm:main1}
Let $L$ be a linear pencil. Then $$S_L=\emptyset\quad\iff\quad -1\in M_L.$$
\end{theorem}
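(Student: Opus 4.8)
The plan is to prove the nontrivial direction, namely that $S_L=\emptyset$ implies $-1\in M_L$; the reverse implication is immediate since every element of $M_L$ is nonnegative on $S_L$, so $-1\in M_L$ forces $S_L=\emptyset$. For the forward direction I would split into the two cases provided by the dichotomy between strong and weak infeasibility. If $L$ is strongly infeasible, then Lemma \ref{sturm} gives $-1\in C_L\subseteq M_L$ and we are done. So the real work is the case where $L$ is weakly infeasible, and here I expect to induct on the size $m$ of the pencil, using Lemma \ref{kaffeebohne} to reduce the size.

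First I would apply Lemma \ref{kaffeebohne} to the weakly infeasible pencil $L$ to obtain vectors $u_1,\dots,u_k\in\R^m\setminus\{0\}$ with $\sum_{i=1}^k u_i^*Lu_i=0$ identically. In particular $u_1^*Lu_1=-\sum_{i=2}^k u_i^*Lu_i$, and since each $u_i^*Lu_i$ is a linear polynomial that is nonnegative on $S_L$ — but actually here $S_L=\emptyset$, so I should instead exploit the algebraic identity directly. The idea is that $v^*Lv\in M_L$ for every $v\in\R^m$ (taking $s=0$ and a single term), hence $-u_1^*Lu_1=\sum_{i=2}^k u_i^*Lu_i\in M_L$ as well. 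Now after an orthogonal change of coordinates in $\R^m$ I may assume $u_1=e_1$, so that the $(1,1)$-entry $L_{11}$ of $L$ satisfies both $L_{11}\in M_L$ and $-L_{11}\in M_L$. Writing $L$ in block form with respect to the splitting $\R\oplus\R^{m-1}$, I want to argue that $L$ being PSD somewhere is essentially governed by a smaller pencil. Concretely, if $L_{11}$ is the zero polynomial, then for $L(x)\succeq0$ the entire first row and column must vanish, so $S_L=S_{L'}$ where $L'$ is the lower-right $(m-1)\times(m-1)$ block; moreover one checks $M_{L'}\subseteq M_L$ (conjugating $(m-1)$-vectors by the inclusion $\R^{m-1}\hookrightarrow\R^m$), and $L'$ is again weakly infeasible (using Lemma \ref{wind}: $S_{L'+\ep I_{m-1}}\neq\emptyset$ because any $y\in S_{L+\ep I_m}$ has $(L+\ep I_m)(y)$ with invertible — in fact $\succeq \ep$ — $(1,1)$ block, so a Schur complement argument produces a point of $S_{L'+\ep I_{m-1}}$), and by induction $-1\in M_{L'}\subseteq M_L$. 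If $L_{11}$ is a nonzero linear polynomial, then $L_{11}$ and $-L_{11}$ both lie in $M_L$; then for any $N$, $N\pm L_{11}$ and in fact — using that $M_L$ is a quadratic module — one can drive towards the archimedean situation or, more cleanly, substitute. Actually the cleanest route: since $\pm L_{11}\in M_L$, and $L_{11}=A_0^{(11)}+\sum_i X_i A_i^{(11)}$ is a nonzero affine function, after an invertible affine change of the variables $\x$ (which is harmless, $M_L$ and $S_L$ transform compatibly) we may assume $L_{11}=X_1$ or $L_{11}=X_1+c$; then $\pm(X_1+c)\in M_L$, and one shows $M_L$ already contains $-1$ by a direct computation, or reduces the number of variables by eliminating $X_1$ and passing to the pencil $L$ restricted to the hyperplane $\{L_{11}=0\}$, which is still infeasible and, being a slice, has its quadratic module inside $M_L$ after adjusting.

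The main obstacle I anticipate is making the block/Schur-complement reduction fully rigorous at the level of quadratic modules: it is easy to see set-theoretically that $S_L$ collapses to a smaller spectrahedron when a diagonal entry is forced to zero, but I must verify that the algebraic certificate $-1\in M_{L'}$ genuinely pulls back to $-1\in M_L$, i.e. that the natural map on quadratic modules induced by the coordinate embedding is well-behaved, and that the smaller pencil inherits weak (not strong — though strong would only help) infeasibility so the induction hypothesis applies. A secondary subtlety is the base case $m=1$: a weakly infeasible pencil of size $1$ is a nonzero nonnegative-nowhere linear polynomial $L=c+\sum X_i a_i$, and one needs $-1\in M_L=\sos+ \sos\cdot L$; since $L<0$ everywhere is impossible for a nonconstant affine function and $L\equiv c<0$ constant gives $-1 = \frac{-1}{c}\,L\in M_L$ as $\frac{-1}{c}>0$ is a square times a positive rational — so in fact the size-$1$ weakly infeasible case is vacuous or trivial, and the genuine content is entirely in the inductive step. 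I would organize the final writeup as: (1) trivial direction; (2) strongly infeasible case via Lemma \ref{sturm}; (3) induction on $m$ for the weakly infeasible case, invoking Lemmas \ref{kaffeebohne}, \ref{wind}, and a Schur-complement computation to descend to size $m-1$.
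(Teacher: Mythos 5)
Your overall strategy — trivial direction, dispatch strong infeasibility via Lemma~\ref{sturm}, then use Lemma~\ref{kaffeebohne} and an induction in the weakly infeasible case — is the right skeleton, but your choice to induct on the \emph{size} $m$ rather than the number of variables $n$ opens a gap you don't close.

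The problem is in your $L_{11}\equiv 0$ case. You assert $S_L=S_{L'}$ where $L'$ is the lower-right $(m-1)\times(m-1)$ block, but this is false: if $L_{11}\equiv 0$, positive semidefiniteness of $L(x)$ forces the entire first row and column of $L(x)$ to vanish, so in fact
$S_L=\{x\mid b(x)=0\}\cap S_{L'}$,
where $b$ denotes the off-diagonal first-column entries. Consequently $L'$ can easily be \emph{feasible} even though $L$ is weakly infeasible, and then the induction hypothesis says nothing. A concrete example is
\[
L=\begin{bmatrix}0 & 1\\ 1 & X\end{bmatrix},
\]
which is weakly infeasible with $L_{11}\equiv 0$, while $L'=(X)$ has $S_{L'}=[0,\infty)\neq\emptyset$. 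Your Schur-complement remark, invoking Lemma~\ref{wind}, only establishes $S_{L'+\ep I_{m-1}}\neq\emptyset$ for all $\ep>0$ — which is consistent with $L'$ being feasible — and hence does not rescue the argument. Similarly, in your $L_{11}\neq 0$ branch the reduction you gesture at (``eliminate $X_1$ and pass to the slice'') is really a reduction on $n$, not on $m$, so the two cases do not combine into a single well-founded induction on $m$ as written.

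For contrast, the paper's proof of Theorem~\ref{thm:main1} inducts on the number of variables $n$: the crucial Claims~1--3(') show that if $-\ell^2\in M_L$ for a nonzero linear $\ell$, or if $0\neq\ell\in M_L\cap(-M_L)$, then one may pass to the pencil obtained by restricting to $\{\ell=0\}$, which has fewer variables, and that $-1$ modulo $(\ell)$ can be lifted back into $M_L$ via an explicit identity. An induction on $m$ \emph{is} possible and is carried out for Theorem~\ref{thm:bounds2}, but there one must additionally specialize the variables so that the whole first row and column of the pencil vanish (and handle separately the case when that linear system is infeasible via a Farkas argument); it is exactly this variable specialization, forcing the reduced $(m-1)\times(m-1)$ pencil to be genuinely infeasible, that your proposal omits.
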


\begin{proof}
One direction is trivial since $M_L$ contains only polynomials nonnegative on $S_L$. We show the nontrivial implication from left to right
by induction on the number $n$ of variables appearing in $L$. In the case $n=0$, we have $L\in\sm$ and therefore
$S_L=\emptyset\iff L\not\succeq0\iff-1\in M_L$.

For the induction step suppose $n\ge 1$ and the statement is already known for linear pencils in less than $n$ variables. The way we will
later use the induction hypothesis is explained by the following claims which are weakening of the theorem itself.

Note that Claim 2 is just a special case of Claim 3' which we formulate for the
sake of clarity. The proof of Claim 3' is very conceptual and uses a tool of real algebraic geometry, namely Prestel's semiorderings \cite{pd}.
For readers that are not familiar with this theory we prove a special case of it by an ad hoc technical argument. This special case
of Claim 3' is stated in Claim 3 and suffices for our subsequent application.

\smallskip
{\bf Claim 1.} If $S_L=\emptyset$ and $0\neq\ell\in\rxlin$, then $-1\in M_L+(\ell)$.

\emph{Explanation.} After an affine linear variable transformation, we may assume $\ell=X_n$. Set $L':=L(X_1,\dots,X_{n-1},0)$. Then
$-1\in M_L+(X_n)$ is equivalent to $-1\in M_{L'}$. By induction hypothesis, the latter follows if $S_{L'}=\emptyset$. But emptiness of $S_L$ clearly
implies the emptiness of $S_{L'}$.\eop

\smallskip
{\bf Claim 2.} If $S_L=\emptyset$ and $0\neq\ell\in M_L\cap-M_L$, then $-1\in M_L$.

\emph{Explanation.} This follows from Claim 1 using the fact that $M_L\cap-M_L$ is an ideal in $\rx$ which follows from \eqref{id4}.\eop

\smallskip
{\bf Claim 3.} If $S_L=\emptyset$ and $0\neq\ell\in\rxlin$ such that $-\ell^2\in M_L$, then $-1\in M_L$.

\emph{Explanation.}  By Claim 1, there is $p\in M_L$ and $q\in\rx$ such that $-1=p+q\ell$. Now
\bes
\begin{split}
-1 & =-2+1=(2p+2q\ell)+((1+q\ell)^2-2q\ell+q^2(-\ell^2))\\
&= 2p+q^2(-\ell^2)+(1+q\ell)^2\in M_L,
\end{split}
\ees
as desired.
\eop

\smallskip
{\bf Claim 3'.} If $S_L=\emptyset$, $k\ge 1$ and $\ell_1,\dots,\ell_k\in\rxlin$ such that $0\neq\ell_1\dotsm\ell_k\in M_L\cap-M_L$, then $-1\in M_L$.

\emph{Explanation.} Assume $-1\notin M_L$. Then there is a semiordering $S$ of $\rx$ such that $M_L\subseteq S$. The advantage of $S$ over
$M_L$ is that $S\cap-S$ is always a prime ideal. Hence $\ell_i\in S\cap-S$ for some $i\in\{1,\dots,k\}$. But this shows that $-1\notin M_L+(\ell_i)$
(even though $(\ell_i)$ might not be contained in $M_L\cap-M_L$). Hence $S_L\neq\emptyset$ by Claim~1.\eop

\smallskip
Now we are prepared to do the induction step. Suppose $S_L=\emptyset$. If $L$ is strongly infeasible, then $-1\in C_L\subseteq M_L$ by Lemma
\ref{sturm}. Therefore we assume from now on that $L$ is weakly infeasible. By Lemma \ref{kaffeebohne}, one of the
following two cases applies:

\smallskip
{\bf Case 1.} There is $u\in\R^m\setminus\{0\}$ with $u^*Lu=0$.

Of course, we can assume that $L$ has no zero column since otherwise we can remove it together with the corresponding row
without altering $S_L$ and $M_L$. Write $L=(\ell_{ij})_{1\le i,j\le m}$. By changing coordinates on $\R^m$, we can assume that $u$ is the
first standard basis vector, i.e., $\ell_{11}=0$. Moreover, we may assume $\ell_{12}\neq0$. Then
$$2pq\ell_{12}+q^2\ell_{22}\in M_L$$ for all $p,q\in\rx$. In particular, $\ell_{12}^2(\ell_{22}+2p)\in M_L$ for all $p\in\rx$ and therefore
$-\ell_{12}^2\in M_L$. Now we conclude by Claim 3 or 3' that $-1\in M_L$.

\smallskip
{\bf Case 2.} Case 1 does not apply but there are $k\ge 2$ and $u_1,\dots,u_k\in\R^m\setminus\{0\}$ such that $\sum_{i=1}^ku_i^*Lu_i=0$.

In this case, $$\ell:=u_1^*Lu_1=-\sum_{i=2}^ku_i^*Lu_i\in C_L\cap-C_L\subseteq M_L\cap-M_L$$ and we are done by Claim 2, since
$\ell=u_1^*Lu_1\neq 0$.
\end{proof}

\begin{corollary}\label{cor:compArch}
Let $L$ be a linear pencil. 
Then
$$
S_L\text{ is bounded}\quad\iff\quad M_L\text{ is archimedean.}
$$
\end{corollary}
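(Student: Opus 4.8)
The plan is to reduce everything to Lemma \ref{lem:compArch} by splitting into the two cases $S_L\neq\emptyset$ and $S_L=\emptyset$, and to handle the second case with Theorem \ref{thm:main1}.

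If $S_L\neq\emptyset$, there is nothing new to prove: the asserted equivalence is exactly Lemma \ref{lem:compArch}. So the only work is the case $S_L=\emptyset$. Here $S_L$ is trivially bounded, so the left-hand side of the equivalence holds, and I just have to check that $M_L$ is archimedean as well. By Theorem \ref{thm:main1}, $S_L=\emptyset$ yields $-1\in M_L$, i.e.\ $M_L$ is an improper quadratic module of $\rx$. Since $\frac12\in\rx$, the identity \eqref{id4} forces $M_L=\rx$, as already noted in Section \ref{sec:not}. But the full ring is obviously archimedean as a quadratic module: for every $f\in\rx$ one has $1-f^2\in\rx=M_L$. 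Hence both sides of the equivalence hold when $S_L=\emptyset$, and the corollary follows.

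There is no real obstacle in this argument; the only point worth making explicit is that Lemma \ref{lem:compArch} genuinely requires $S_L\neq\emptyset$, and that the missing empty case is exactly what the (nontrivial) infeasibility certificate of Theorem \ref{thm:main1} supplies, combined with the elementary remark that an improper quadratic module, being the whole polynomial ring, is automatically archimedean.
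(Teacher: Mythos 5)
Your proof is correct and follows exactly the same route as the paper: reduce to Lemma~\ref{lem:compArch} when $S_L\neq\emptyset$, and for $S_L=\emptyset$ invoke Theorem~\ref{thm:main1} to get $-1\in M_L$ and hence archimedeanity. You merely spell out the final step (improper quadratic module $\Rightarrow$ $M_L=\rx$ $\Rightarrow$ archimedean) that the paper leaves implicit.
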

              
\begin{proof}
If $S_L\neq\emptyset$, then this is Lemma \ref{lem:compArch}.
If $S_L=\emptyset$, then $-1\in M_L$ by Theorem \ref{thm:main1},
so $M_L$ is archimedean.
\end{proof}     

\begin{rem}
Note that the above corollary is a strong variant
of Schm\"udgen's characterization \cite{sm} of bounded basic closed
semialgebraic sets as being exactly those whose describing
finitely generated preorderings are archimedean. Preorderings have
the tendency of being much larger than quadratic modules.
In general, a finitely generated quadratic module might describe a
bounded or even an empty set without being archimedean, see
\cite[Example 6.3.1]{pd} and \cite[Example 7.3.2]{ma}.
Corollary~\ref{cor:compArch} says that quadratic modules associated to linear pencils behave very well
in this respect.
\end{rem}

We conclude this section with a version of Putinar's Positivstellensatz \cite{pu} for bounded spectrahedra:

\begin{cor}\label{linpos}
Let $L$ be a linear pencil and assume that $S_L$ is bounded.
If $f\in\R\cx$ satisfies $f|_{S_L}>0$, then $f\in M_L$.
\end{cor}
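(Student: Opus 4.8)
The plan is to obtain this as a straightforward consequence of the archimedean Positivstellensatz of Putinar \cite{pu}, once the two hypotheses of that theorem have been checked for the quadratic module $M:=M_L\subseteq\rx$. Recall that for a quadratic module $M$ in $\rx$ one writes $K_M:=\{x\in\R^n\mid g(x)\ge0\text{ for all }g\in M\}$, and Putinar's theorem asserts that if $M$ is archimedean and $f\in\rx$ satisfies $f>0$ on $K_M$, then $f\in M$. So the whole task reduces to showing that $M_L$ is archimedean and that $K_{M_L}=S_L$.

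First I would note that $M_L$ is archimedean: this is precisely Corollary \ref{cor:compArch}, since $S_L$ is bounded by hypothesis. This is the step carrying all the substance, and it has already been established. Next I would identify $K_{M_L}$ with $S_L$. The inclusion $S_L\subseteq K_{M_L}$ is exactly the observation made right after Definition \ref{def:scm}, that every element of $M_L$ is nonnegative on $S_L$. For the reverse inclusion, if $x\in\R^n\setminus S_L$ then $L(x)\not\succeq0$, so there is $v\in\R^m$ with $v^*L(x)v<0$; since the linear polynomial $v^*Lv$ belongs to $M_L$, this exhibits a member of $M_L$ negative at $x$, whence $x\notin K_{M_L}$. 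Therefore $K_{M_L}=S_L$, and Putinar's theorem applied to $M=M_L$ gives $f\in M_L$, as claimed. (In the degenerate case $S_L=\emptyset$ one need not invoke Putinar: Theorem \ref{thm:main1} gives $-1\in M_L$, hence $M_L=\rx\ni f$ by \eqref{id4}.)

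The only genuine delicacy is that it is not evident that $M_L$ is finitely generated as a quadratic module, whereas Putinar's original statement is usually phrased for finitely generated quadratic modules. This is harmless: I would instead cite the representation-theoretic form of the result valid for an \emph{arbitrary} archimedean quadratic module of $\rx$ (see \cite{ma}), which requires only the archimedean property supplied by Corollary \ref{cor:compArch}. With that caveat addressed there is no real obstacle; the argument is essentially the dictionary between the matricial inequality $L(x)\succeq0$ and the family of scalar inequalities $v^*L(x)v\ge0$, combined with the already-proved equivalence ``$S_L$ bounded $\iff$ $M_L$ archimedean''.
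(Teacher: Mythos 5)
Your proof is correct and follows essentially the same route as the paper: invoke Corollary \ref{cor:compArch} to get that $M_L$ is archimedean, then apply the version of Putinar's Positivstellensatz valid for arbitrary archimedean quadratic modules (the paper's phrase ``a slight generalization'' and its citation to \cite[Theorem 5.6.1]{ma} refer to exactly the issue you flag about finite generation). Your explicit verification that $K_{M_L}=S_L$ and your handling of the empty case are correct details that the paper leaves implicit.
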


\begin{proof}
By Corollary \ref{cor:compArch}, $M_L$ is archimedean. Now apply (a slight generalization of) Putinar's Positivstellensatz
\cite[Theorem 5.6.1]{ma}.
\end{proof}

\begin{remark} Let $L$ be a linear pencil with bounded $S_L$.
\begin{enumerate}[(1)]
\item In the case $L$ is strongly feasible, Corollary \ref{linpos} has already been proved in \cite[\S 7]{hkm} by completely different techniques,
namely complete positivity from operator algebras. Note however that the more involved case in our approach occurs when $L$ is infeasible.
\item 
From Corollary $8$ it is easy to see that the quadratic module \emph{in the sense of rings with involution} (see \cite{ks}) associated to a
linear pencil
$L$ of size $m$ in the ring of $s\times s$ matrix polynomials is archimedean (in the sense of \cite[Subsection 3.1]{ks} or \cite[\S 6, \S 7]{hkm}) if the
spectrahedron $S_L$ defined by $L$ is bounded (cf.~\cite[\S 7]{hkm}). 
Among other consequences, this implies a suitable generalization of
Corollary \ref{linpos} for \emph{matrix polynomials} positive definite on the bounded spectrahedron $S_L$ (cf.~\cite[Corollary 1]{hs},
\cite[Theorem 3.7]{ks} and \cite[Theorem 7.1]{hkm}).
\end{enumerate}
\end{remark}

\subsection{Degree Bounds}\label{subs:degbound}

Given a linear pencil $L$ of size $m$ and $k\in\N_0$, let
\begin{align*}
M_L^{(k)}&:=\Big\{\sum_ip_i^2+\sum_jv_j^*Lv_j\mid p_i\in\rx_k,~v_j\in\rx^m_k\Big\}\\
&~=\Big\{s+\tr(LS)\mid s\in\R\cx_{2k}\text{ sos-polynomial},\ S\in\R\cx^{m\times m}_{2k}\text{ sos-matrix}\Big\}\\
&~\subseteq\R\ac_{2k+1},
\end{align*}
be the \emph{truncated quadratic module with degree restriction} $k$
associated to $L$. Note that $M_L^{(0)}=C_L$.

By a closer inspection of the proof of Theorem \ref{thm:main1} we
can obtain exponential degree bounds for the infeasibility certificate.

\begin{theorem}\label{thm:bounds1}
Let $L$ be an infeasible linear pencil in $n$ variables. Then
$$-1\in M_L^{(2^n-1)}.$$
\end{theorem}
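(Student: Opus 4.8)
The plan is to run through the induction in the proof of Theorem~\ref{thm:main1} once more, this time bookkeeping the degrees of the polynomials and matrix polynomials that appear in the various certificates. The claim is that an infeasible linear pencil in $n$ variables admits a certificate $-1\in M_L^{(k_n)}$ with $k_n=2^n-1$, i.e.\ $k_0=0$ and $k_n=2k_{n-1}+1$. The base case $n=0$ is immediate: there $L\in\sm$, infeasibility means $L\not\succeq0$, so there is $u\in\R^m$ with $u^*Lu<0$ (here $u^*Lu$ is a scalar), and scaling gives $-1=u'^*Lu'\in C_L=M_L^{(0)}$, matching $k_0=2^0-1=0$.

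For the induction step I would first record the degree cost of each ingredient. (a)~\emph{Sturm's Lemma~\ref{sturm}:} if $L$ is strongly infeasible then $-1\in C_L=M_L^{(0)}$, so this branch contributes nothing and $k_n\ge 0$ suffices. (b)~\emph{Claim~1:} after an affine change of variables $\ell=X_n$, one has $-1\in M_{L'}$ with $L'$ in $n-1$ variables, and the identity ``$-1\in M_{L'}$ $\Leftrightarrow$ $-1\in M_L+(X_n)$'' is degree-preserving in the sense that a degree-$k_{n-1}$ certificate for $L'$ yields a representation $-1=p+qX_n$ with $p\in M_L^{(k_{n-1})}$ and $q\in\rx$; the degree of $q$ is controlled (it is essentially the $X_n$-coefficient of the $L'$-certificate, hence of degree $\le 2k_{n-1}$, but $q$ itself is not what enters $M_L^{(\cdot)}$, only $p$ does). (c)~\emph{Claim~3:} from $-1=p+q\ell$ with $p\in M_L^{(k_{n-1})}$ and the extra datum $-\ell^2\in M_L$, the displayed identity
\[
-1=2p+q^2(-\ell^2)+(1+q\ell)^2
\]
shows $-1\in M_L$, and the degree of the new certificate is governed by $\deg(q^2(-\ell^2))$ and $\deg((1+q\ell)^2)$. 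Writing $-\ell^2\in M_L^{(d)}$ for the appropriate $d$ coming from Case~1 below, and noting $\deg\ell=1$, these terms have degree bounded by something like $2\deg q+2d+2$ and $2\deg q+2$; since $\deg q\le 2k_{n-1}$ one gets a bound of the shape $k_n\le 2k_{n-1}+O(1)$, and a careful accounting (using $M_L^{(k)}\subseteq M_L^{(k')}$ for $k\le k'$) should yield exactly $k_n\le 2k_{n-1}+1$. (d)~\emph{Case~1 of the induction step:} from a zero column/row reduction and $\ell_{11}=0$, $\ell_{12}\ne0$ one reads off $\ell_{12}^2(\ell_{22}+2p)=2p q\ell_{12}+q^2\ell_{22}\in M_L$ suitably, and letting $p$ be chosen so that $\ell_{22}+2p$ equals a small positive constant (or just $1$) one gets $-\ell_{12}^2\in M_L^{(1)}$, say; here $\ell_{12}$ is linear, so this is a bona fide low-degree datum to feed into Claim~3. (e)~\emph{Case~2:} one has $\ell:=u_1^*Lu_1\in C_L\cap -C_L=M_L^{(0)}\cap-M_L^{(0)}$, a \emph{linear} element of the ideal $M_L\cap-M_L$, and Claim~2 (which is Claim~1 applied to this $\ell$, using that $M_L\cap-M_L$ is an ideal) then gives $-1\in M_L$ with the same degree bookkeeping as in (c).

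The delicate point, and the one I expect to be the main obstacle, is the precise degree arithmetic in step~(c): one must verify that the multiplier $q$ produced by Claim~1 has degree at most $2k_{n-1}$ (not more), and then that squaring it and multiplying by the quadratic datum $-\ell^2$ (or by the $-\ell_{12}^2$ of Case~1, which is again quadratic) lands inside $M_L^{(2k_{n-1}+1)}$ rather than a slightly larger truncation. This requires being careful about: (i)~how the affine linear change of variables in Claim~1 affects degrees (it does not increase them), (ii)~the fact that $-1=p+q\ell$ with $p\in M_L^{(k_{n-1})}$ forces $\deg q\le 2k_{n-1}+1-1=2k_{n-1}$ by comparing degrees, possibly after discarding higher-degree cancelling terms, and (iii)~that each summand $2p$, $q^2(-\ell^2)$, $(1+q\ell)^2$ of the final identity individually lies in $M_L^{(2k_{n-1}+1)}$: indeed $\deg(q^2(-\ell^2))\le 2(2k_{n-1})+2=4k_{n-1}+2$, which as a polynomial has degree $\le 2(2k_{n-1}+1)+1$ only if we are slightly generous, so one genuinely has to check whether the bound is $2^n-1$ or whether the recursion only gives something like $2^{n+1}$; the stated theorem asserts $2^n-1$, so the accounting must be done tightly, presumably exploiting that $q^2(-\ell^2)=(q\ell)^2\cdot(\text{sos datum})$ can be absorbed more efficiently, or that the sos-matrix $S$ appearing in the ``$\tr(LS)$'' form of $M_L^{(k)}$ only needs degree $\le k$ in each entry. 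I would therefore write out the identity of Claim~3 in the explicit $s+\tr(LS)$ normal form, track $\deg s$ and the entrywise degree of $S$ separately, and confirm both are $\le 2(2k_{n-1}+1)$ and $\le 2(2k_{n-1}+1)$ respectively so that $-1\in M_L^{(2k_{n-1}+1)}$; combined with the base case this closes the induction with $k_n=2^n-1$.

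Finally I would remark that the two sub-cases of the weakly infeasible case both reduce, via Case~1/Case~2, to an application of Claim~3 (equivalently Claim~2/Claim~1) with a \emph{linear} or \emph{quadratic} auxiliary certificate of bounded degree, so the only place degree doubling enters is the single invocation of Claim~1 inside Claim~3, which is exactly what produces the $2k_{n-1}+1$ recursion; iterating from $k_0=0$ gives $k_n=2^n-1$ as claimed.
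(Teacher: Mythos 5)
Your plan is essentially the one the paper follows, and the recursion $k_n=2k_{n-1}+1$ with $k_0=0$ is exactly right. The doubt you voice at the end of point (iii) is unfounded: the accounting closes tightly. From $-1\in M_{L'}^{(k_{n-1})}$ you get $p_i\in\rx_{k_{n-1}}$ and $v_j\in\rx^m_{k_{n-1}}$ with $-1=\sum p_i^2+\sum v_j^*L'v_j$, and setting $q:=\sum v_j^*A_nv_j$ gives $\deg q\le 2k_{n-1}$; then in the identity $-1=2\sum p_i^2+2\sum v_j^*Lv_j+(1-qX_n)^2+q^2(-X_n^2)$ each summand lies in $M_L^{(2k_{n-1}+1)}$: the $v_j$ have degree $\le k_{n-1}\le 2k_{n-1}+1$, the sos-polynomial $(1-qX_n)^2$ has degree $\le 2(2k_{n-1}+1)$, and since $-X_n^2=w^*Lw$ with $w\in\rx^m_1$ (your step (d), taking $\ell_{22}+2p=-1$, not $+1$), one has $q^2(-X_n^2)=(qw)^*L(qw)$ with $qw\in\rx^m_{2k_{n-1}+1}$. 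So $4k_{n-1}+2=2(2k_{n-1}+1)$ is precisely what the truncation $M_L^{(2k_{n-1}+1)}$ allows, with no slack and no need to be ``generous.'' You should also note explicitly that if $\ell_{12}\in\R\setminus\{0\}$, then $-\ell_{12}^2\in M_L^{(1)}$ already gives $-1\in M_L^{(1)}\subseteq M_L^{(2^n-1)}$ and the induction terminates early.

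Where the paper diverges from your route is that it never enters your Case~2 at all: before looking for $u$ with $u^*Lu=0$, it replaces $L$ by $I_m\otimes L$. Since $M_{I_m\otimes L}^{(k)}=M_L^{(k)}$ (split any $w\in\rx^{m^2}_k$ into blocks $v_1,\dots,v_m\in\rx^m_k$ and note $w^*(I_m\otimes L)w=\sum_i v_i^*Lv_i$) and $S_{I_m\otimes L}=S_L$, this costs nothing, and Lemma~\ref{kaffeebohne} then delivers a single nonzero $u\in\R^{m^2}$ with $u^*(I_m\otimes L)u=0$ by stacking the $u_1,\dots,u_k$ into one column vector padded with zeros. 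This collapses the case analysis of Theorem~\ref{thm:main1} to your Case~1 and makes the degree count uniform. If you insist on keeping Case~2, your sentence ``the same degree bookkeeping as in (c)'' is too quick: (c) uses the datum $-\ell^2\in M_L^{(1)}$, but in Case~2 you instead have $\pm\ell\in M_L^{(0)}$, and you must invoke the identity behind \eqref{id4}, namely $4q\ell=(q+1)^2\ell+(q-1)^2(-\ell)$, to place $q\ell\in M_L^{(\deg q+1)}\subseteq M_L^{(2k_{n-1}+1)}$. That works and yields the same recursion, but it is a genuinely different manipulation from the one in (c), so it needs to be spelled out rather than asserted.
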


\begin{proof}
We shall prove this by induction on $n$. The statement is
clear for $n=0$. Given $n\in\N$ and $$L=A_0+\sum_{i=1}^n X_iA_i,$$ we assume the statement has been
established for all infeasible linear pencils with  $n-1$ variables.

By Lemma \ref{kaffeebohne}, there is an $u\in\R^{m^2}\setminus\{0\}$ with $u^*(I_m\otimes L)u=0$.
Replacing $L$ by $I_m\otimes L$  changes neither $M_L^{(k)}$ nor $S_L=\emptyset$. Without loss of generality,
we assume therefore that there is $u\in\R^m\setminus\{0\}$ with $u^*Lu=0$. Writing $L=(\ell_{ij})_{1\le i,j\le m}$
and performing a linear coordinate change on $\R^m$, we can moreover assume $\ell_{11}=0$.
Furthermore, without loss of generality, $\ell_{12}\neq0$. As in
Case~1 of the proof of Theorem \ref{thm:main1} above, we deduce 
\beq\label{eq:ugly0}
-\ell_{12}^2\in M_L^{(1)}.
\eeq
If $\ell_{12}\in\R$, we are done. Otherwise after possibly performing an
affine linear change of variables on $\R^n$, we may assume $\ell_{12}=X_n$.

Now $L':=L(X_1,\ldots,X_{n-1},0)$ is an 
infeasible linear pencil in $n-1$ variables.
By our induction hypothesis, $-1\in M_{L'}^{(2^{n- 1}-1)}$.
In particular, there are $$p_i\in\rx_{2^{n-{ 1}}-1}\text{ and }v_j\in\rx^m_{2^{n-{ 1}}-1}$$ satisfying
$$
-1=\sum_ip_i^2+\sum_jv_j^*L'v_j.
$$
Let $q:=\sum_j v_j^*A_nv_j\in\rx_{2^{n}-2}$.
Then
\beq\label{eq:ugly1}\begin{split}
-1&=2\sum_ip_i^2+2\sum_jv_j^*L'v_j+1\\
&=2\sum_ip_i^2+2\sum_jv_j^*Lv_j-2qX_n+1\\
& = 2\sum_ip_i^2+2 \sum_jv_j^*Lv_j + (1-qX_n)^2+q^2(-X_n^2).
\end{split}\eeq
Since $\deg q\leq 2^{n}-2$, we have $q^2(-X_n)\in M_L^{(2^{n}-1)}$
by \eqref{eq:ugly0}.
Taken together with $(1-qX_n)^2\in  M_L^{(2^{n}-1)}$,
\eqref{eq:ugly1} implies $-1\in M_L^{(2^{n}-1)}$.
\end{proof}

Similarly as in Theorem \ref{thm:bounds1}, one can obtain 
a bound on $k$ in a certificate of the form $-1\in M_L^{(k)}$  
that depends exponentially on the size $m$ of $L$.

\begin{thm}\label{thm:bounds2}
Let $L$ be an infeasible linear pencil of size $m$. Then
$$-1\in M_L^{(2^{m-1}-1)}.$$
\end{thm}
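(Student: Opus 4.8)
The plan is to mimic the proof of Theorem~\ref{thm:bounds1} line by line, but to run the induction on the \emph{size} $m$ instead of on the number of variables, and to replace the substitution ``$X_n=0$'' by a step that genuinely lowers the size of the pencil. The base case $m=1$ is immediate: an infeasible $1\times1$ pencil is a negative constant $-c$, so $-1=\tfrac1c L\in C_L=M_L^{(0)}$. For the step, let $L$ have size $m\ge2$; if $L$ is strongly infeasible we get $-1\in C_L=M_L^{(0)}$ from Lemma~\ref{sturm}, so assume $L$ is weakly infeasible. By Lemma~\ref{kaffeebohne} (with the trivial Bohnenblust bound) there are $u_1,\dots,u_k\in\R^m\setminus\{0\}$, $k\le m$, with $\sum_iu_i^*Lu_i=0$; put $W:=\sum_iu_iu_i^*\succeq0$ and $s:=\rank W\ge1$, so $\tr(WL)\equiv0$. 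First I would observe that $s\le m-1$: if $W\succ0$ then $\tr(WL(x))=0$ forces $L(x)=0$ on $S_L$, and since $S_L=\emptyset$ means $L(x)\neq0$ for all $x$, a short estimate gives $\dist(\{L(x)\},\smpsd)>0$, contradicting weak infeasibility. A congruence (neither $S_L$ nor any $M_L^{(j)}$ changes under it) brings $W$ to $\operatorname{diag}(I_s,0)$, i.e.\ $L_{11}+\dots+L_{ss}=0$. Each $L_{ii}$ with $i\le s$ is therefore in $C_L\cap(-C_L)$ (it equals $e_i^*Le_i$ and also $-\sum_{j\le s,\,j\ne i}L_{jj}$), so $-L_{ii}^2\in M_L^{(1)}$ via the identity $-4\ell^2=(1-\ell)^2\ell+(1+\ell)^2(-\ell)$; and if some $L_{ii}$ ($i\le s$) is a nonzero constant we already get $-1\in C_L$, so assume each such $L_{ii}$ is $0$ or nonconstant.

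The technical core is a \emph{reduction step}, to be applied once or twice. Let $b_1,\dots,b_t$ be linear polynomials with $-b_l^2\in M_L^{(1)}$ for all $l$ and $V:=\{x:b_1(x)=\dots=b_t(x)=0\}$. If $V=\emptyset$, some $\R$-linear combination $\sum_l\nu_lb_l$ is a nonzero constant, and an \emph{absorption lemma} --- obtained by iterating the identities $1+2rb_l=(1+rb_l)^2+r^2(-b_l^2)$ and $-1=2(-1-rb_t)+r^2(-b_t^2)+(1+rb_t)^2$, which one checks turns $-1=P+\sum_lr_lb_l$ with $P\in M_L^{(k)}$ and $\deg r_l\le D$ into $-1\in M_L^{(\max(k,D+1))}$ --- yields $-1\in M_L^{(1)}$. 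If $V\ne\emptyset$, parametrize $V$ by an affine isomorphism $\psi\colon\R^d\to V$, and suppose that for some $s'<m$ all entries $L_{ij}$ with $\min(i,j)\le s'$ lie in the $\R$-linear span of $b_1,\dots,b_t$, so that $L\circ\psi=0_{s'}\oplus C$ for a size-$(m-s')$ pencil $C$ in $d$ variables (with $s'=0$ this just says $C=L\circ\psi$). If $-1\in M_C^{(K)}$, then, writing the $n-d$ coordinates transverse to $V$ as $\R$-linear combinations of the $b_l$ and expanding $L\circ\psi$ about $V$ as in \eqref{eq:ugly1}, one obtains $-1=P+\sum_lr_lb_l$ with $P\in M_L^{(K)}$ and $\deg r_l\le2K$, hence $-1\in M_L^{(2K+1)}$ by the absorption lemma.

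Now I would assemble the cases. If $s=1$, then $L_{11}=\tr(WL)=0$; with $\{b_l\}=\{L_{12},\dots,L_{1m}\}$ (so $-L_{1j}^2\in M_L^{(1)}$ by the $2\times2$-submatrix trick using $L_{11}=0$) and $V=\{L_{1j}=0:j\ge2\}$: either $V=\emptyset$ (handled) or $C$ is an infeasible pencil of size $m-1$, so induction gives $-1\in M_C^{(2^{m-2}-1)}$ and thus $-1\in M_L^{(2^{m-1}-1)}$. If $s\ge2$ (hence $m\ge3$): first take $\{b_l\}=\{L_{ii}:i\le s\}$, $s'=0$, and restrict to $V_0=\{L_{ii}=0:i\le s\}$; if $V_0=\emptyset$ we are done, otherwise $\tilde L:=L\circ\psi$ has size $m$ with $\tilde L_{11}=\dots=\tilde L_{ss}=0$, and $-1\in M_{\tilde L}^{(K)}\Rightarrow-1\in M_L^{(2K+1)}$. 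Then, inside $\tilde L$, take $\{b_l\}=\{\tilde L_{ij}:\min(i,j)\le s\}$ (with $-\tilde L_{ij}^2\in M_{\tilde L}^{(1)}$ again by the $2\times2$ trick) and $s'=s$: induction on the infeasible size-$(m-s)$ pencil $C$ gives $-1\in M_C^{(2^{m-s-1}-1)}$, hence $-1\in M_{\tilde L}^{(2^{m-s}-1)}$, hence $-1\in M_L^{(2^{m-s+1}-1)}\subseteq M_L^{(2^{m-1}-1)}$, the last inclusion because $s\ge2$. (If the inner $V$ is empty, the absorption lemma gives $-1\in M_{\tilde L}^{(1)}$ and then $-1\in M_L^{(3)}\subseteq M_L^{(2^{m-1}-1)}$ since $m\ge3$.)

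The one genuinely delicate point is the case $s\ge2$: one would like to reduce it to $s=1$ by the tensor trick $I_k\otimes L$ of Theorem~\ref{thm:bounds1}, but that inflates the size and wrecks the bound, so the size must be cut by the full amount $s$ at once. This forces the two nested restrictions $V_0\supseteq V_1$ --- one to kill the diagonal entries $L_{ii}$ ($i\le s$), one to kill the remaining entries of rows and columns $1,\dots,s$ --- hence two degree-doublings, and it is precisely the inequality $2^{m-s+1}-1\le2^{m-1}-1$, valid exactly when $s\ge2$, that makes these two doublings fit inside the target exponent $2^{m-1}-1$.
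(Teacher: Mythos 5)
Your proof is correct, and it is broadly the same strategy as the paper's — induction on the size $m$, disposal of the strongly infeasible case via Lemma~\ref{sturm}, an appeal to Lemma~\ref{kaffeebohne}, a congruence on $\R^m$ together with an affine change of variables on $\R^n$, a restriction to an affine subspace $V$, and the identity $-1=2(-1)+1$ to absorb the residual ideal term at the cost of one degree-doubling — but it is organized differently at the crucial reduction step. The paper first establishes a single canonical block form for $L$ with top-left entry $b_0\in\{0,X_1\}$ and first column $b$ satisfying $-b_j^2\in M_L^{(1)}$ for \emph{all} $j$, and then performs exactly \emph{one} restriction (to $\{b_0=b=0\}$), which kills the entire first row and column and shrinks the size from $m$ to $m-1$ at the cost of a single degree-doubling; it never needs to know the rank of the Bohnenblust matrix $W$. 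You instead fix $W=\sum_iu_iu_i^*$ from Lemma~\ref{kaffeebohne}, observe $s:=\rank W\le m-1$, and cut the size by the full amount $s$ in one induction step. When $s\ge 2$ this costs you \emph{two} nested restrictions --- first to $V_0$ killing the diagonal entries $L_{11},\dots,L_{ss}$, then to $V_1\subseteq V_0$ killing the remaining entries of rows and columns $1,\dots,s$ --- and therefore two degree-doublings; the arithmetic closes precisely because $2(2(2^{m-s-1}-1)+1)+1=2^{m-s+1}-1\le 2^{m-1}-1$ exactly when $s\ge2$. The case $s=1$ must be treated separately and there coincides with the paper's single-restriction reduction (with $b_0=0$). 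Your argument is thus a valid and self-contained alternative; its absorption lemma nicely isolates the mechanism behind the degree-doubling, but the paper's route is somewhat leaner, as it avoids tracking $\rank W$ and the accompanying $s\ge2$ case split.
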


\begin{proof}
We prove this by induction on $m$. The statement is
clear for $m=1$. 
Given $$L=A_0+\sum_{i=1}^n X_iA_i\in S\R\cx^{m\times m}$$ 
of size $m\ge2$,
we assume the statement has been
established for all infeasible linear pencils
 of size $m-1$.
If $L$ is strongly infeasible, then $-1\in C_L=M_L^{(0)}$
by Lemma \ref{sturm}. So we may assume $L$ is weakly infeasible.

\smallskip
{\bf Claim.} 
There is an affine linear change of variables after which
$L$ assumes the form
\bes
L = \begin{bmatrix} b_0 & b^* \\ b & L'
\end{bmatrix},
\ees
where
$b_0\in\R\cx_1$,
$b=\begin{bmatrix} b_1 & \cdots & b_{m-1}\end{bmatrix}^*\in\R\cx_1^{m-1}$, 
$L'$ is a linear pencil of size $m-1$, and $b_j\in\R\cx_1$ satisfy
\beq\label{eq:bj}
-b_j^2\in M_L^{(1)} \quad \text{for }j=0,\ldots,m-1.
\eeq
Furthermore, $b_0$ can be chosen to be either $0$ or $X_1$.

\emph{Explanation.}
By Lemma \ref{kaffeebohne},
there is $k\in\N$ and $u_1,\ldots,u_k\in \R^m\setminus\{0\}$ with
$\sum_{i=1}^k u_i^*Lu_i=0$. We distinguish two cases.

\smallskip
{\bf Case 1.} There is $u\in\R^m\setminus\{0\}$ with $u^*Lu=0$.

Write $L=(\ell_{ij})_{1\le i,j\le m}$. By changing coordinates on $\R^m$, we can assume that $u$ is the
first standard basis vector, i.e., $\ell_{11}=0$. 
Hence
$$
L=\begin{bmatrix}
0& b^* \\
b & L'
\end{bmatrix},
$$
where $b=\begin{bmatrix} b_1 & \cdots & b_{m-1}\end{bmatrix}^*\in\R\cx_1^{m-1}$ and 
$L'$ is a linear pencil of size $m-1$.
As in the proof of Theorem \ref{thm:bounds1},
we deduce
$-b_j^2\in M_L^{(1)}$ for all $j=1,\ldots,m-1$.

\smallskip
{\bf Case 2.}
Case 1 does not apply but there are $k\ge 2$ and $u_1,\dots,u_k\in\R^m\setminus\{0\}$ such that $\sum_{i=1}^ku_i^*Lu_i=0$.

In this case, $$\ell_{11}:=u_1^*Lu_1=-\sum_{i=2}^ku_i^*Lu_i\in C_L\cap-C_L=
M_L^{(0)}\cap-M_L^{(0)}.$$
Since Case 1 does not apply, $\ell_{11}\neq 0$. Furthermore, since $L$
is assumed to be weakly infeasible, $\ell_{11}\not\in\R$.
Hence
after an affine linear change of variables on $\R^n$, we
can assume $\ell_{11}=X_1$.
Thus
\bes
L=\begin{bmatrix}
X_1& b^* \\
b & L'
\end{bmatrix},
\ees
where $b=\begin{bmatrix} b_1 & \cdots & b_{m-1}\end{bmatrix}^*\in\R\cx_1^{m-1}$ and 
$L'$ is a linear pencil of size $m-1$.
Note that 
\[
-4 X_1^2 =  (1-X_1)^2 X_1 +  (1+X_1)^2 (-X_1)
\]
shows that $-X_1^2\in M_L^{(1)}$.
Using this, one gets similarly as above that also each of the entries $b_j$ of
$b$ satisfies $-b_j^2\in M_L^{(1)}$.
This proves our claim. \eop

\smallskip
If one of the $b_j\in\R\setminus\{0\}$, we are done
by \eqref{eq:bj}.
Otherwise we consider two cases.

\smallskip
{\bf Case a.}
If the linear system $b_0(x)=0,\ b(x)=0$ is infeasible, then 
we proceed as follows.
There are $\alpha_0,\ldots,\al_{m-1}\in\R$ satisfying
\beq\label{eq:fart}
\sum_{j=0}^{m-1} \al_j b_j = 1.
\eeq
For each $j=0,\ldots,m-1$ and $\de\in\R$ we have
\bes
1+\de b_j = \Big( 1+\frac {\de}2 b_j\Big)^2 +
\frac{\de^2}4 (-b_j^2) \in M_L^{(1)}
\ees
by \eqref{eq:bj}.
Hence \eqref{eq:fart} implies
\[
-1 = 1-2 = 1- 2\sum_{j=0}^{m-1} \al_j b_j = 
\sum_{j=0}^{m-1} \Big( \frac 1{m} - 2\al_jb_j\Big) \in M_L^{(1)}.
\]

\smallskip
{\bf Case b.}
Suppose the linear system $b_0(x)=b(x)=0$ is feasible. 
Then we perform an affine linear change of variables on $\R^n$ to ensure
\[
\{x\in\R^n\mid b_0(x)=0,\ b(x)=0\} = \{0\}^r \times \R^{n-r}
\]
for some $r\in\N$.
Moreover, we may assume $X_{1},\ldots,X_{r}$ are among the
entries $b_j$, $j=0,\ldots,m-1$.

Now $L'':=L'(0,\ldots,0,X_{r+1},\ldots, X_n)$ is an infeasible
linear pencil of size $m-1$. By our induction hypothesis,
$-1\in M_{L''}^{(2^{m-2}-1)}$.
In particular, there are $s\in\sos$ with $\deg s\leq 2^{m-1}-2$, and
$v_i\in\rx^{m-1}_{2^{m-2}-1}$ satisfying
$$
-1=s+\sum_i v_i^*L'' v_i.
$$
Introducing
\[
q_t:=\sum_i v_i^*A_tv_i \in\rx_{2^{m-1}-2}\qquad\text{and}\qquad w_i:=\begin{bmatrix}0\\v_i\end{bmatrix}\in\rx^m_{2^{m-2}-1}
\]
we have
\beq\label{eq:ugly3c}
\begin{split}
-1 &=\big(2s+2\sum_iv_i^*L''v_i\big)+1\\
&=\big( 2s + 2 \sum_iv_i^*L'v_i-
\sum_{t=1}^r 2q_tX_t \big)+
\sum_{t=1}^r 
\Big( \big( \frac1{\sqrt r}-\sqrt rq_tX_t\big)^2+2q_tX_t+r q_t^2(-X_t^2)\Big) \\
& = 2s + 2 \sum_{i}  w_i^*L w_i + \sum_{t=1}^r\big(\frac1{\sqrt r}-\sqrt rq_tX_t\big)^2+\sum_{t=1}^rrq_t^2(-X_t^2).
\end{split}\eeq
Combining 
$q_t^2(-X_t^2)\in M_L^{(2^{m-1}-1)}$
with $(\frac1{\sqrt r}-\sqrt rq_tX_t)^2\in  M_L^{(2^{m-1}-1)}$,
\eqref{eq:ugly3c} implies $-1\in M_L^{(2^{m-1}-1)}$.
\end{proof}

\begin{cor}\label{cor:bounds}
Let $L$ be an infeasible linear pencil of size $m$ in $n$ variables. Then
$$-1\in M_L^{(2^{\min\{m{ -1},n\}}-1)}.$$
\end{cor}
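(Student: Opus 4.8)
The statement is a routine consequence of the two preceding degree bounds, so the proof will be short. The plan is to combine Theorem \ref{thm:bounds1} (which gives $-1\in M_L^{(2^n-1)}$) with Theorem \ref{thm:bounds2} (which gives $-1\in M_L^{(2^{m-1}-1)}$) and then take whichever of the two exponents is smaller. For this I first observe the obvious monotonicity of the truncated quadratic modules: if $k\le k'$ then $M_L^{(k)}\subseteq M_L^{(k')}$, because $\rx_k\subseteq\rx_{k'}$ and $\rx^m_k\subseteq\rx^m_{k'}$, so every representation $\sum_ip_i^2+\sum_jv_j^*Lv_j$ witnessing membership in $M_L^{(k)}$ is also a valid representation at level $k'$.

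\textbf{Carrying it out.} Since $\min\{m-1,n\}$ equals either $m-1$ or $n$, we split into two cases. If $\min\{m-1,n\}=m-1$, then $2^{\min\{m-1,n\}}-1=2^{m-1}-1$ and Theorem \ref{thm:bounds2} already gives $-1\in M_L^{(2^{m-1}-1)}=M_L^{(2^{\min\{m-1,n\}}-1)}$. If instead $\min\{m-1,n\}=n$, then $2^{\min\{m-1,n\}}-1=2^n-1$ and Theorem \ref{thm:bounds1} gives $-1\in M_L^{(2^n-1)}=M_L^{(2^{\min\{m-1,n\}}-1)}$. (Equivalently, one can phrase this uniformly: $2^{\min\{m-1,n\}}-1=\min\{2^{m-1}-1,\,2^n-1\}$, and by monotonicity $M_L^{(\min\{2^{m-1}-1,2^n-1\})}$ is contained in both $M_L^{(2^{m-1}-1)}$ and $M_L^{(2^n-1)}$; but here we want the reverse inclusion, so the case split is the cleaner route — whichever theorem produces the smaller certificate degree is applied, and that certificate already lives at level $2^{\min\{m-1,n\}}-1$.) In either case $-1\in M_L^{(2^{\min\{m-1,n\}}-1)}$, as claimed.

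\textbf{Main obstacle.} There is essentially none: the only point to be slightly careful about is the direction of the inclusion $M_L^{(k)}\subseteq M_L^{(k')}$ for $k\le k'$, so that the smaller of the two available exponents is the one that matters; after that the corollary is immediate from Theorems \ref{thm:bounds1} and \ref{thm:bounds2}.
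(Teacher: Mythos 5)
Your proof is correct and matches the paper's approach exactly — the paper simply says the corollary is immediate from Theorems \ref{thm:bounds1} and \ref{thm:bounds2}, and your case split on whether $\min\{m-1,n\}$ equals $m-1$ or $n$ is precisely the (short) argument being left implicit. Your care in noting that the monotonicity $M_L^{(k)}\subseteq M_L^{(k')}$ for $k\le k'$ runs the wrong way and that a case split is cleaner is accurate, though even the monotonicity remark is dispensable here.
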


\begin{proof}
This is immediate from Theorems \ref{thm:bounds1} and \ref{thm:bounds2}.
\end{proof}

\subsection{Examples}\label{subs:ex}

The standard textbook example \cite{st,wsv} of a weakly infeasible linear pencil
seems to be
$$
L:= \begin{bmatrix} X & 1 \\ 1 & 0\end{bmatrix}.
$$
Then $-1\not\in C_L$, but $-1\in M_L^{(1)}$.
Indeed,
for $$u:=\begin{bmatrix}1 & -1-\frac X2\end{bmatrix}^*,$$ we have
$$-1=\frac12u^*Lu.$$

\begin{example}\label{ex:sturm2}
Let 
$$
L:= \begin{bmatrix}0& X_1 & 0 \\ X_1 & X_2&1\\ 0&1&X_1\end{bmatrix}.
$$
Then $L$ is weakly infeasible and $-1\not\in M_L^{(1)}$.

Assume otherwise, and let 
\beq\label{eq:contr1}
-1=s+\sum_jv_j^*Lv_j,
\eeq where
$v_j\in\rx_1^3$ and $s\in\sum\rx^2$ with $\deg s\leq2$. 
We shall carefully analyze the terms $v_j^*Lv_j$. Write
$$v_j=\begin{bmatrix}q_{1j}&q_{2j}&q_{3j}\end{bmatrix}^*,
\quad \text{and}\quad q_{ij}=a_{ij}+b_{ij}X_1+c_{ij}X_2$$
with $a_{ij},b_{ij},c_{ij}\in\R$.
Then the $X_2^3$ coefficient of $v_j^*Lv_j$ equals
$c_{2j}^2$, so $c_{2j}=0$ for all $j$.
Next, by considering the $X_1X_2^2$ terms, we deduce $c_{3j}=0$.
Now the only terms possibly contributing to $X_2^2$ come from $s$, so
$s\in\R[X_1]_2$. The coefficient of $X_2$ in $v_j^*Lv_j$ is a square,
so $a_{2j}=0$. But now $v_j^*Lv_j$ does not have a constant term
anymore, leading to a contradiction with \eqref{eq:contr1}. 

From Theorem \ref{thm:main1} it follows that $-1\in M_L^{(3)}$. 
In fact, $-1\in M_L^{(2)}$ since
\[
-2 = u^* L u
\]
for 
$u= \begin{bmatrix}
\frac{1}{2}+ \frac{X_2}{2}+\frac{X_2^2}{8} & -1 & 1+\frac
   {X_2}{2}
\end{bmatrix}^*\in \R\cx_2^3$.
\end{example}

\section{Polynomial size certificates for infeasibility}\label{sec:ram}

Theorem \ref{thm:bounds1} (or Corollary \ref{cor:bounds}) enables us to reformulate the feasibility of a linear pencil as the infeasibility of an LMI.
Indeed, let a linear pencil $L$ in $n$ variables of size $m$ be given. Then $L$ is infeasible if and only if there exists an sos-polynomial
$s\in\rx$ and an sos-matrix $S\in\rx^{m\times m}$ both of degree at most $2^{n+1}-2$ such that
\begin{equation}\label{eq:e-1}
-1=s+\tr(LS).
\end{equation}
By comparing coefficients, the polynomial equation \eqref{eq:e-1} can of course be written as a system of linear equations in the coefficients of $s$
and $S$. Now (the coefficient tuples of) sos-polynomials in $\rx$ of bounded degree form \emph{a projection of} a spectrahedron.
In other words, the condition of being (the coefficient tuple) of an sos-polynomial in $\rx$ of bounded degree can be expressed within an LMI
by means of additional variables. This is the well known \emph{Gram matrix method} \cite{lau,ma}. As noted by Kojima \cite{ko} and nicely
described by Hol and Scherer \cite{hs}, the Gram matrix method extends easily to sos-\emph{matrices}.

Hence  the existence of $s$ and $S$ in \eqref{eq:e-1} is indeed equivalent to the feasibility of an LMI which we will not write down explicitly.
The drawback of this LMI is that it will be large since our degree bounds for the sos-polynomial and the sos-matrix are exponential.

This problem is overcome in this section, where for a given linear pencil $L$, we construct an LMI whose feasibility is equivalent to the infeasibility
of the given linear pencil $L$ and which can be written down in polynomial time (and hence has polynomial size) in the bit size of $L$ if $L$
has rational coefficients. Each feasible point of the new LMI gives rise to a certificate of infeasibility for $L$ which is however more involved than the one
using the quadratic module $M_L$. In addition to the quadratic module, we will now use another notion from real algebraic geometry, namely the
real radical ideal.

But actually we will not only characterize infeasibility of LMIs but even give a new duality theory for SDP where strong duality always holds in contrast
to the standard duality theory. We will call our dual the \emph{sums of squares dual} of an SDP. For a given (primal) SDP with rational coefficients the sums of squares dual can be written down in polynomial time in the bit size of the primal.

\subsection{Review of standard SDP duality}\label{subs:standard}

We first recall briefly the standard duality theory of SDP. We present it from the view point of a real algebraic geometer, i.e., we use the language
of polynomials in the formulation of the primal-dual pair of SDPs and in the proof of strong duality. This is necessary for a good understanding of
the sums of squares dual which we will give later.

A semidefinite program (P) and its standard dual (D) is given by a linear pencil $L\in\R[\x]^{m\times m}$ and
a linear polynomial $\ell\in\R[\x]$ as follows:

\bigskip
$
\begin{array}[t]{lrl}
(P)&\text{minimize}&\ell(x)\\
&\text{subject to}&x\in\R^n\\
&&L(x)\succeq0
\end{array}
$\hfill
$
\begin{array}[t]{lrl}
(D)&\text{maximize}&a\\
&\text{subject to}&S\in\sm,\ a\in\R\\
&&S\succeq0\\
&&\ell-a=\tr(LS)
\end{array}
$

\bigskip\noindent
To see that this corresponds (up to some minor technicalities) to the formulation in the literature, just write the polynomial constraint
$\ell-a=\tr(LS)$ of the dual as $n+1$ linear equations by comparing coefficients.

The optimal values of $(P)$ and $(D)$ are defined to be
\begin{align*}
P^*&:=\inf\{\ell(x)\mid x\in\R^n,\ L(x)\succeq0\}\in\R\cup\{\pm\infty\}\qquad\text{and}\\
D^*&:=\sup\{a\mid S\in S\R^{m\times m}_{\succeq0},\ a\in\R,\ \ell-a=\tr(LS)\}\in\R\cup\{\pm\infty\},
\end{align*}
respectively, where the infimum and the supremum is taken in the ordered set $\{-\infty\}\cup\R\cup\{\infty\}$ (where $\inf\emptyset=\infty$ and
$\sup\emptyset=-\infty$).

By \emph{weak duality}, we mean that $P^*\ge D^*$ or equivalently that the objective value of $(P)$ at any of its feasible points is
greater or equal to the objective value of $(D)$ at any of its feasible points.

Fix a linear pencil $L$. It is easy to see that weak duality holds for all primal objectives $\ell$ if and only if
$$f\in C_L\implies\text{$f\ge0$ on $S_L$}$$
holds for all $f\in\rx_1$, which is of course true.
By \emph{strong duality}, we mean that $P^*=D^*$ (\emph{zero duality gap}) and that (the objective of) $(D)$ \emph{attains} this common optimal value
in case this is finite. It is a little exercise to see that strong duality for all primal objectives $\ell$ is equivalent to
$$\text{$f\ge0$ on $S_L$}\iff f\in C_L$$
for all $f\in\rx_1$.

As we have seen in Subsection \ref{subs:ex}, this fails in general.
It is however well-known that it is true when the feasible set $S_L$ of the primal (P) has non-empty interior
(e.g. if $L$ is strongly feasible). For convenience of the reader, we include a proof which has a bit of a flavor
of real algebraic geometry.

\begin{prop}[Standard SDP duality]\label{usual}
Let $L\in S\rx^{m\times m}$ be a linear pencil such that $S_L$ has non-empty interior. Then
$$f\ge0\text{\ on\ }S_L\ \iff\ f\in C_L$$
for all $f\in\rx_1$.
\end{prop}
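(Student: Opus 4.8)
The plan is to prove the nontrivial direction: given $f \in \rx_1$ with $f \ge 0$ on $S_L$, produce $c \ge 0$ and $u_i \in \R^m$ with $f = c + \sum_i u_i^* L u_i$. The ``easy'' direction, that every element of $C_L$ is nonnegative on $S_L$, is already noted in the text. Since $S_L$ has non-empty interior, pick $x_0$ with $L(x_0) \succ 0$; after an affine change of variables (which affects neither the form of the statement nor $C_L$) we may assume $x_0 = 0$, so $A_0 = L(0) \succ 0$.

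First I would treat the degenerate case where $f$ is constant: if $f \equiv c$ with $c \ge 0$, then $f = c + 0^* L 0 \in C_L$ and we are done; if $f \equiv c < 0$ the hypothesis $f \ge 0$ on the non-empty $S_L$ is violated. So assume $f$ is a genuine (non-constant) linear polynomial. The idea is then a Hahn-Banach separation in the space $\sm \times \R$, or equivalently in $\R^n$, using the closed convex cone $C_L$. The key technical point is that $C_L$ is \emph{closed} in $\rx_1$: this is exactly where strict feasibility is used. Indeed $C_L = \{c + \tr(LS) \mid c \ge 0,\ S \succeq 0\}$ is the image under a linear map of the closed cone $\R_{\ge 0} \times \smpsd$, and such an image is closed provided the preimage of $0$ meets the cone only in its ``recession-compatible'' part; concretely, if $c_k + \tr(LS_k) \to g$ with $c_k \ge 0$, $S_k \succeq 0$, one shows the $S_k$ (and $c_k$) stay bounded because $A_0 = L(0) \succ 0$ forces $\tr(L(0) S_k) = \tr(A_0 S_k) \ge \lambda_{\min}(A_0)\,\tr(S_k)$, and $c_k + \tr(A_0 S_k)$ is the value of the convergent sequence at $x=0$, hence bounded; then pass to a convergent subsequence. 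This closedness argument is the main obstacle, and it is precisely the standard reason strong duality needs a Slater point.

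With $C_L$ closed and convex, suppose for contradiction $f \notin C_L$. By the separation theorem (as quoted, \cite[Chapter III, Theorem 1.2]{ba1}) there is a linear functional on $\rx_1$, i.e. evaluation against some point-plus-constant, separating $f$ from $C_L$; since $C_L$ is a cone containing $0$, the functional is $\le 0$ on $C_L$ and $> 0$ at $f$. Writing this functional as $g \mapsto g(x^*) \cdot t$ is slightly delicate because the dual of $\rx_1$ is spanned by evaluations at points together with the constant functional; one shows the separating functional, being nonpositive on the ray $\R_{\ge 0} \subseteq C_L$, must be (a nonnegative multiple of) evaluation at some genuine point $x^* \in \R^n$, or a limiting ``evaluation at infinity'' which the closedness/boundedness rules out. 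For $x^*$ a genuine point: $g \mapsto g(x^*)$ nonpositive on $C_L$ means in particular $u^* L(x^*) u \le 0$ for all $u \in \R^m$, i.e. $L(x^*) \preceq 0$; but then combined with $L(0) = A_0 \succ 0$ and convexity of the segment, either $L(x^*) = 0$ (so $x^* \in S_L$, giving $f(x^*) \le 0$, contradicting $f(x^*) > 0$) or we can slide slightly from $0$ toward $x^*$ to land at a point of $S_L$ where $f < 0$, again a contradiction.

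I would write this up as: reduce to $A_0 \succ 0$; dispose of constant $f$; prove $C_L$ closed via the boundedness estimate above; apply separation; analyze the separating functional to produce a point of $S_L$ at which $f$ is negative, contradicting the hypothesis. The only step requiring real care is the closedness of $C_L$ (equivalently, the boundedness of the separating/dual variables), and that is exactly what the non-empty interior of $S_L$ buys us; everything else is routine convexity and the spectral estimate $\tr(A_0 S) \ge \lambda_{\min}(A_0)\tr(S)$ for $S \succeq 0$.
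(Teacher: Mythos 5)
Your proposal has the right high-level shape (closedness of $C_L$, then separation), but two steps contain genuine errors.

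\textbf{The reduction to $L(x_0)\succ0$ is not justified.} Non-empty interior of $S_L$ does \emph{not} imply strict feasibility of $L$. For instance, if $L$ has a zero row and column, $L(x)$ is never positive definite, yet $S_L$ can still have non-empty interior. What is true is: if one first removes the common kernel $U=\{u\in\R^m\mid Lu=0\}$ (which changes neither $S_L$ nor $C_L$), \emph{then} an interior point of $S_L$ automatically gives $L(x_0)\succ0$. The paper bakes this into its closedness argument by working with the quotient $\R^m/U$ rather than reducing to $A_0\succ0$. Once you add this reduction, your boundedness estimate via $\tr(A_0 S_k)\ge\lambda_{\min}(A_0)\tr(S_k)$ is a perfectly good alternative to the paper's argument, which instead shows that the quadratically homogeneous map $\ph(a,\bar u_1,\dots,\bar u_m)=a^2+\sum u_i^*Lu_i$ has trivial kernel and is therefore proper.

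\textbf{The separation step is sign-confused and the "sliding" argument fails.} You choose $\ps\le0$ on $C_L$ and $\ps(f)>0$. But $1\in C_L$, so $\ps(1)\le0$, and hence $\ps$ cannot equal $\ev_{x^*}$ (which sends $1$ to $1$); at best $\ps$ is a \emph{nonpositive} multiple of an evaluation when $\ps(1)<0$. Replacing $\ps$ by $-\ps/|\ps(1)|$ one gets $\ev_{x^*}\ge0$ on $C_L$ and $\ev_{x^*}(f)<0$, i.e.\ $L(x^*)\succeq0$ (so $x^*\in S_L$) and $f(x^*)<0$ — the contradiction you want, directly, with no sliding needed. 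As written, you instead reach $L(x^*)\preceq0$ and $f(x^*)>0$; sliding from a strictly feasible point $0$ toward $x^*$ does \emph{not} produce a point of $S_L$ where $f<0$, since $f$ is affine, $f(0)\ge0$ and $f(x^*)>0$ force $f\ge0$ on the whole segment. Finally, the case $\ps(1)=0$ ("evaluation at infinity") is not dismissed by closedness; the paper handles it cleanly by replacing $\ps$ by $\ps+\ep\,\ev_y$ for an interior $y\in S_L$ and small $\ep>0$, which keeps $\ps\ge0$ on $C_L$ and $\ps(f)<0$ while making $\ps(1)>0$. You should adopt that perturbation rather than hand-wave it away.
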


\begin{proof}
In a preliminary step, we show that the convex cone $C_L$ is closed in $\rx_1$. To this end,
consider the linear subspace $U:=\{u\in\R^m\mid Lu=0\}\subseteq\R^m$.
The map
$$\ph\colon\R\times(\R^m/U)^m\to C_L,\quad(a,\bar u_1,\dots,\bar u_m)\to a^2+\sum_{i=1}^mu_i^*Lu_i$$
is well-defined and surjective.

Suppose $\ph$ maps $(a,\bar u_1,\dots,\bar u_m)\in(\R^m/U)^m$ to $0$. Fix $i\in\{1,\dots,m\}$. Then
$u_i^*L(x)u_i=0$ for all $x\in S_L$. Since $L(x)\succeq0$, this implies $L(x)u_i=0$ for all
$x\in S_L$. Using the hypothesis that $S_L$ has non-empty interior, we conclude that $Lu_i=0$, i.e., $u_i\in U$.
Since $i$ was arbitrary and $a=0$, this yields $(a,\bar u_1,\dots,\bar u_m)=0$.

This shows $\ph^{-1}(0)=\{0\}$. Together with the fact that $\ph$ is a (quadratically) homogeneous map, this implies that
$\ph$ is proper (see for example \cite[Lemma 2.7]{ps}). In particular, $C_L=\im\ph$ is closed.

Suppose now that $f\notin\rx_1\setminus C_L$. The task is to find $x\in S_L$ such that $f(x)<0$.
Being a closed convex cone, $C_L$ is the intersection of all closed half-spaces containing
it. Therefore we find a linear map $\ps:\rx_1\to\R$ such that $\ps(C_L)\subseteq\R\nn$ and $\ps(f)<0$. We can assume
$\ps(1)>0$ since otherwise $\ps(1)=0$ and we can replace $\ps$ by $\ps+\ep\ev_y$ for some small $\ep>0$ where
$y\in S_L$ is chosen arbitrarily. Hereby $\ev_x\colon\rx_1\to\R$ denotes the evaluation in $x\in\R^n$. Finally, after a
suitable scaling we can even assume $\ps(1)=1$.

Now setting $x:=(\ps(X_1),\dots,\ps(X_n))\in\R^n$, we have $\ps=\ev_x$. So $\ps(C_L)\subseteq\R\nn$ means exactly that
$L(x)\succeq0$, i.e., $x\in S_L$. At the same time $f(x)=\ps(f)<0$ as desired.
\end{proof}

\subsection{Certificates for low-dimensionality of spectrahedra}\label{subs:lowdim}

As mentioned above, the problems with the standard duality theory for SDP arise when one deals with spectrahedra having empty interior.
Every convex set with empty interior is contained in an affine hyperplane. The basic idea is now to code the search for such an affine hyperplane
into the dual SDP and to replace equality in the constraint $f-a=\tr(LS)$ of (D) by congruence modulo the linear
polynomial $\ell$ defining the affine hyperplane. This raises however several issues:

First, $S_L$ might have codimension bigger than one in $\R^n$. This will be resolved by iterating the search up to $n$ times.

Second, we do not see any possibility to encode the search for the linear polynomial $\ell$ directly into an SDP. What we can implement is the search
for a non-zero \emph{quadratic} sos-polynomial $q$ together with a certificate of $S_L\subseteq\{q=0\}$. Note that $\{q=0\}$ is a proper affine subspace
of $\R^n$. It would be ideal to find $q$ such that $\{q=0\}$ is the affine hull of $S_L$ since then we could actually avoid the $n$-fold iteration just
mentioned. However it will follow from Example \ref{ex:new} below that this is in general not possible.

Third, we have to think about how to implement congruence modulo linear polynomials $\ell$ vanishing on $\{q=0\}$. This will be dealt with by
using the radical ideal from real algebraic geometry in connection with Schur complements.

We begin with a result which ensures that a suitable quadratic sos-polynomial $q$ can always be found. In fact, the following proposition says that
there exists such a $q$ which is actually a square. The statement is of interest in itself since it provides certificates for low-dimensionality of
spectrahedra. We need \emph{quadratic} (i.e., degree $\le 2$) sos-matrices for this.

\begin{proposition}\label{exsos}
For any linear pencil $L\in S\rx^{m\times m}$, the following are equivalent:
\begin{enumerate}[\rm(i)]
\item $S_L$ has empty interior;\label{exsos1}
\item There exists a non-zero linear polynomial\label{exsos2}
$\ell\in\rx_1$ and a quadratic sos-matrix $S\in S\rx^{m\times m}$ such that
\begin{equation}\label{eq:lowdim}
-\ell^2=\tr(LS).
\end{equation}
\end{enumerate}
\end{proposition}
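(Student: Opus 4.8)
The implication \eqref{exsos2}$\Rightarrow$\eqref{exsos1} is the easy direction: if $-\ell^2 = \tr(LS)$ with $S$ an sos-matrix, then for every $x\in S_L$ we have $\tr(L(x)S(x))\ge 0$ since $L(x)\succeq 0$ and $S(x)\succeq 0$, forcing $\ell(x)^2\le 0$, hence $\ell(x)=0$. Thus $S_L\subseteq\{\ell=0\}$, a proper affine subspace (as $\ell\neq 0$), so $S_L$ has empty interior. The real content is the converse.

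For \eqref{exsos1}$\Rightarrow$\eqref{exsos2}, the plan is to use the fact that $S_L$ has empty interior to find a linear polynomial $\ell$ vanishing on $S_L$, i.e., $\ell|_{S_L}\ge 0$ and $-\ell|_{S_L}\ge 0$. After an affine change of variables we may assume $S_L\subseteq\{X_n=0\}$, so both $X_n$ and $-X_n$ vanish on $S_L$. Now I want to produce the identity $-X_n^2=\tr(LS)$ for a quadratic sos-matrix $S$. The natural route is: the linear pencil $L':=L(X_1,\dots,X_{n-1},0)$ has $S_{L'}\supseteq$ (the image of) $S_L$, but more to the point, one should argue on $L$ directly. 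Consider the pencil $\widetilde L := L$ restricted/shifted so that $S_{L}\subseteq\{X_n=0\}$ means the system ``$L(x)\succeq 0$ and $X_n>0$'' is infeasible, in fact \emph{strongly} infeasible after a small perturbation, or rather one applies Claim 3 / Case 1 machinery. Concretely: the pencil $\begin{bmatrix}0 & X_n\\ X_n & L\end{bmatrix}$ or a similar bordered construction is weakly infeasible with a zero on the diagonal, and the computation in Case 1 of the proof of Theorem \ref{thm:main1} (equivalently equation \eqref{eq:ugly0} in Theorem \ref{thm:bounds1}) yields $-X_n^2\in M_L^{(1)}$, i.e., $-X_n^2 = \sigma + \tr(LS)$ with $\sigma$ an sos-polynomial of degree $\le 2$ and $S$ a quadratic sos-matrix. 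Comparing degrees and the fact that the left side is $-X_n^2$ should force $\sigma$ to be constant, namely $\sigma=0$, or be absorbed; one then checks $\sigma = 0$ by evaluating at a point of $S_L$ (where $\tr(LS)\ge 0$ and $-X_n^2=0$, giving $\sigma\le 0$, hence $\sigma\equiv 0$). This yields \eqref{eq:lowdim} with $\ell=X_n$.

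The main obstacle I anticipate is setting up the bordered pencil correctly so that the Case~1 argument applies \emph{and} keeping the degrees under control so that $S$ is genuinely \emph{quadratic} (degree $\le 2$) and the leftover sos-polynomial $\sigma$ really vanishes rather than merely being some nonnegative polynomial. The key point is that the identity derived in Case 1 of Theorem \ref{thm:main1} has the shape $-\ell_{12}^2 \in M_L^{(1)}$ coming from $2pq\,\ell_{12}+q^2\ell_{22}\in M_L$ with $p,q$ constants, which already produces a \emph{quadratic} sos-matrix witness; the work is to recognize that with $S_L\subseteq\{X_n=0\}$ one can arrange $\ell_{12}=X_n$ (or reduce to that case), and that the additive sos-polynomial term is forced to be $0$ by evaluating on the nonempty set $S_L$. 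If $S_L=\emptyset$ the statement is still fine since then one can take any nonzero $\ell$ and invoke Theorem \ref{thm:main1} together with the same degree bookkeeping, or simply note $S_L$ trivially has empty interior and the bordered-pencil argument still goes through.
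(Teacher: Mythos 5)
Your easy direction \eqref{exsos2}$\Rightarrow$\eqref{exsos1} is correct. For the converse, however, your plan has several genuine gaps.

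First, the central reduction you propose --- pass to $S_L\subseteq\{X_n=0\}$ and then try to certify $-X_n^2\in M_L^{(1)}$ --- is not what the paper does and is not straightforward. There is no theorem in the paper (and none you supply) giving a certificate of the inclusion $S_L\subseteq\{X_n=0\}$ starting from mere knowledge of that inclusion. The ``bordered pencil'' $\begin{bmatrix}0 & X_n\\ X_n & L\end{bmatrix}$ is also problematic: even if you make it dimensionally meaningful, the sos-matrix appearing in any certificate for it has size $m+1$, not $m$, and the trace involves the border entries, so it does not convert into an identity of the required form $-\ell^2=\tr(LS)$ with $S\in S\R\cx^{m\times m}$.

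Second, and more seriously, your argument that the leftover sos-polynomial $\sigma$ must vanish is fallacious. Evaluating $-X_n^2=\sigma+\tr(LS)$ at $x_0\in S_L$ does indeed force $\sigma(x_0)=0$, but this only shows $\sigma$ vanishes \emph{on $S_L$}, not that $\sigma\equiv 0$ as a polynomial. An sos-polynomial can easily vanish on a low-dimensional convex set without being identically zero (e.g.\ $X_n^2$ vanishes on $\{X_n=0\}$). The proposition genuinely requires no additive sos term, and your route leaves one that you cannot kill.

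Third, the case $S_L=\emptyset$ is not disposed of by ``invoking Theorem~\ref{thm:main1} together with the same degree bookkeeping'': that theorem yields $-1\in M_L$, i.e.\ $-1=\sigma+\tr(LS)$ with a possibly nonzero sos term, which again is the wrong shape. The paper handles this subcase separately: if $L$ is strongly infeasible, one uses Lemma~\ref{sturm} to write $-1-c=\sum_iu_i^*Lu_i$ with $c\ge0$ and rescales to get $c=0$, obtaining $-1=\tr(LS)$ with $\ell=1$ (a nonzero \emph{constant}) and $S$ a constant sos-matrix.

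The paper's actual proof sidesteps all of these issues. After deleting zero columns/rows of $L$ (so $Lu\neq 0$ for $u\neq 0$), it first treats strong infeasibility via Sturm's lemma as above. In the remaining case ($L$ weakly infeasible or weakly feasible, which covers both empty and nonempty $S_L$ with empty interior), it applies Lemma~\ref{kaffeebohne} to produce nonzero vectors $u_1,\dots,u_k$ with $\sum_iu_i^*Lu_i=0$, and then makes a \emph{direct explicit construction}: in Case~1 (some single $u^*Lu=0$) take $\ell:=\ell_{12}\neq0$ and $v:=[\tfrac12(-1-\ell_{22}),\ \ell,\ 0,\dots,0]^*$, so $S:=vv^*$ gives $\tr(LS)=v^*Lv=-\ell^2$ exactly, with no leftover sos term because $\ell_{11}=0$ kills the $\ell'^2$ contribution; in Case~2 ($k\ge 2$) take $\ell:=u_1^*Lu_1$, write $-\ell=\ell_1^2-\ell_2^2$, and build $S=\ell_1^2u_1u_1^*+\ell_2^2\sum_{i\ge 2}u_iu_i^*$. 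You would need to recover these explicit constructions, and in particular invoke Lemma~\ref{kaffeebohne}, which your proposal never uses.
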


\begin{proof}
From \eqref{exsos2} it follows that $-\ell^2\in M_L$ and therefore $-\ell^2\ge0$ on $S_L$, which implies $\ell=0$ on $S_L$.
So it is trivial that \eqref{exsos2} implies \eqref{exsos1}.

For the converse, suppose that $S_L$ has empty interior.
If there is $u\in\R^m\setminus\{0\}$ such that $Lu=0$ then, by an orthogonal change of
coordinates on $\R^m$, we could assume that $u$ is the first unit vector $e_1$. But then we delete the first
column and the first row from $L$. We can iterate this and therefore assume from now on that there is no
$u\in\R^m\setminus\{0\}$ with $Lu=0$.

We first treat the rather trivial case where $L$ is strongly infeasible. By Lemma~\ref{sturm}, there are
$c\in\R\nn$ and $u_i\in\R^m$ with $-1-c=\sum_iu_i^*Lu_i$. By scaling the $u_i$ we can assume $c=0$.
Setting $S:=\sum_iu_iu_i^*\in\sm$ and $\ell:=1$, we have $-\ell^2=-1=\sum_iu_i^*Lu_i=\tr(LS)$ for the
constant sos-matrix $S$ and the constant non-zero linear polynomial $\ell$.

Now we assume that $L$ is weakly infeasible or feasible. In case that $L$ is feasible, it is clearly weakly feasible
since otherwise $S_L$ would have non-empty interior. Now Lemma \ref{kaffeebohne} justifies the
following case distinction:

\smallskip
{\bf Case 1.} There is $u\in\R^m\setminus\{0\}$ with $u^*Lu=0$.

Write $L=(\ell_{ij})_{1\le i,j\le m}$. Again by an orthogonal change of coordinates on $\R^m$, we can assume that $u=e_1$,
i.e., $\ell_{11}=0$. Moreover, we may assume $\ell:=\ell_{12}\neq0$ (since $Le_1=Lu\neq0$). Setting
$\ell':=\frac12(-1-\ell_{22})$,
$v:=[\ell'\ \ell\ 0\dots0]^*$ and $S:=vv^*$, we have
$$\tr(LS)=v^*Lv=2\ell'\ell\ell_{12}+\ell^2\ell_{22}=\ell^2(\ell_{22}+2\ell')=-\ell^2.$$

\smallskip
{\bf Case 2.} Case 1 does not apply but there are $k\ge 2$ and
$u_1,\dots,u_k\in\R^m\setminus\{0\}$ such that $\sum_{i=1}^ku_i^*Lu_i=0$.

Here we set $\ell:=u_1^*Lu_1\neq 0$ and write $-\ell=\ell_1^2-\ell_2^2$ where
$\ell_1:=\frac12(-\ell+1)\in\rxlin$ and $\ell_2:=\frac12(-\ell-1)\in\rxlin$.
Then we can use the quadratic sos-matrix
$$S:=\ell_1^2u_1u_1^*+\ell_2^2\sum_{i=2}^ku_iu_i^*=\ell_1^2u_1u_1^*-\ell_2^2u_1u_1^*=-\ell u_1u_1^*$$
to get $\tr(LS)=-\ell u_1^*Lu_1=-\ell^2$.
\end{proof}

The certificate \eqref{eq:lowdim} of low-dimensionality exists for \emph{some} but in general \emph{not for every} affine hyperplane containing the
spectrahedron. We illustrate this by the following example where the spectrahedron has codimension two and therefore is contained in
infinitely many affine hyperplanes \emph{only one} of which allows for a certificate \eqref{eq:lowdim}.

\begin{example}\label{ex:new}
Let
\bes
\begin{split}
L & =\begin{bmatrix} 0 & X_1 & 0 \\ X_1 & X_2 & X_3\\0 & X_3 & X_1\end{bmatrix}.
\end{split}
\ees 
Then $S_{L}=\{ (0,x_2,0)\in\R^3 \mid x_2\geq0\}$ and the (affine) hyperplanes containing $S_L$ are $\{X_1=0\}$ and $\{aX_1+X_3=0\}$ ($a\in\R$).
As is shown in Case 1 of the proof of Proposition \ref{exsos}, the certificate of low-dimensionality \eqref{eq:lowdim} exists for the hyperplane
$\{X_1=0\}$, i.e., there is a quadratic sos-matrix $S$ such that $-X_1^2=\tr(LS)$. However, none of the other hyperplanes containing $S_L$ allows
for the certificate \eqref{eq:lowdim}.

Otherwise assume that there is $a\in\R$ such that $\{aX_1+X_3=0\}$ has a also corresponding certificate. Combining it with the one
for $\{X_1=0\}$, we get a quadratic sos-matrix $S$ such that
$$-(2a^2)X_1^2-2(aX_1+X_3)^2=\tr(LS)$$
which implies
$$-X_3^2=(2aX_1+X_3)^2+(-(2a^2)X_1^2-2(aX_1+X_3)^2)\in M_L^{(1)}.$$
Specializing $X_3$ to $1$, one gets the contradiction $-1\in M_{L'}^{(1)}$ where $L'$ is the linear pencil from
Example~\ref{ex:sturm2}.
\end{example}

\subsection{Iterating the search for an affine hyperplane} We now carry out the slightly technical but easy iteration of Proposition \ref{exsos} announced
in Subsection \ref{subs:lowdim} and combine
it with Proposition \ref{usual}. We get a new type of Positivstellensatz for linear polynomials on spectrahedra with bounded degree complexity.

\begin{theorem}[Positivstellensatz for linear polynomials on spectrahedra]\label{indsos}\mbox{}\\
Let $L\in S\rx^{m\times m}$ be a linear pencil and $f\in\rx_1$. Then $$f\ge0\text{ on }S_L$$ if and only if  there exist
$\ell_1,\dots,\ell_n\in\rx_1$, quadratic sos-matrices $S_1,\dots,S_n\in S\rx^{m\times m}$, a matrix $S\in\smpsd$
and $c\ge0$ such that
\begin{align}
\ell_i^2+\tr(LS_i)&\in(\ell_1,\dots,\ell_{i-1})\quad\text{for}\quad i\in\{1,\dots,n\},\qquad\text{and}\label{indsos1}\\
f-c-\tr(LS)&\in(\ell_1,\dots,\ell_n).\label{indsos2}
\end{align}
\end{theorem}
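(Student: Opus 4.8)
The plan is to prove the hard (``only if'') direction by induction on the number $n$ of variables, essentially peeling off one affine hyperplane at a time using Proposition~\ref{exsos}, and combining the result with the full-dimensional duality of Proposition~\ref{usual}. First I would dispose of the base case $n=0$ (and, more conveniently, the case where $S_L$ has non-empty interior, where the claim is immediate from Proposition~\ref{usual} with all $\ell_i=0$, $S_i=0$). So assume $n\ge 1$, $f\ge 0$ on $S_L$, and $S_L$ has empty interior. By Proposition~\ref{exsos} there is a non-zero linear polynomial $\ell_1\in\rx_1$ and a quadratic sos-matrix $S_1$ with $-\ell_1^2=\tr(LS_1)$; after an affine linear change of variables we may assume $\ell_1=X_n$, so that $X_n^2+\tr(LS_1)=0$, which is \eqref{indsos1} for $i=1$ (the ideal $(\ell_1,\dots,\ell_{i-1})$ being zero there).

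Next I would set $L':=L(X_1,\dots,X_{n-1},0)\in S\R[X_1,\dots,X_{n-1}]^{m\times m}$ and observe $S_{L'}=\{x'\in\R^{n-1}\mid (x',0)\in S_L\}$, since $X_n$ vanishes on $S_L$. The polynomial $f':=f(X_1,\dots,X_{n-1},0)$ is then $\ge 0$ on $S_{L'}$, and $L'$ has one fewer variable, so the induction hypothesis yields $\ell_2',\dots,\ell_n'\in\R[X']_1$, quadratic sos-matrices $S_2',\dots,S_n'$, a matrix $S'\in\smpsd$ and $c\ge 0$ with $\ell_i'^2+\tr(L'S_i')\in(\ell_2',\dots,\ell_{i-1}')$ for $i\in\{2,\dots,n\}$ (re-indexed appropriately) and $f'-c-\tr(L'S')\in(\ell_2',\dots,\ell_n')$. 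The point now is that $L'$ and $L$ differ only by terms divisible by $X_n$: for any matrix polynomial $T$, $\tr(L'T)-\tr(LT)$ lies in the ideal $(X_n)=(\ell_1)$, and similarly $f'-f\in(\ell_1)$ and $\ell_i'^2-$(its lift) etc.\ all differ by elements of $(\ell_1)$. Hence lifting $\ell_i',S_i',S'$ verbatim to polynomials in $X_1,\dots,X_n$ (viewing them as not involving $X_n$) and setting $\ell_i:=\ell_i'$, $S_i$ the lift of $S_i'$ for $i\ge 2$, $S:=S'$ (same $c$), all the congruences \eqref{indsos1}, \eqref{indsos2} obtained over $\R[X']$ remain valid modulo the \emph{larger} ideal $(\ell_1,\dots,\ell_n)\subseteq\R[\x]$, because we have only enlarged the ideals by adjoining $\ell_1$. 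This gives exactly the required data for $L$ in $n$ variables.

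The only genuinely delicate point — and the step I expect to be the main obstacle — is bookkeeping the congruences correctly: one must check that adding the generator $\ell_1$ to every ideal $(\ell_1,\dots,\ell_{i-1})$ does no harm (it does not, since we only pass to a superset), and that replacing $L'$ by $L$ inside $\tr(L'S_i')$ and $\tr(L'S')$ introduces only an error in $(\ell_1)$, which follows from $\ell_1=X_n$ dividing every entry of $L-L'$. One should also note the matrices $S_i$ remain quadratic sos-matrices (lifting does not raise degree and preserves being a sum of hermitian squares) and $S$ remains positive semidefinite. The converse (``if'') direction is the easy one: given the data, \eqref{indsos1} shows inductively that $\ell_i=0$ on $S_L$ (since $\tr(LS_i)\ge 0$ there forces $-\ell_i^2\ge 0$ modulo the already-vanishing $\ell_1,\dots,\ell_{i-1}$), whence the ideal $(\ell_1,\dots,\ell_n)$ vanishes on $S_L$; then \eqref{indsos2} evaluated on $S_L$ gives $f=c+\tr(LS)\ge 0$ there. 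I would write the easy direction first, then the induction.
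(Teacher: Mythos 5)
Your proof follows essentially the same route as the paper's: induct on $n$, handle the full-dimensional case via Proposition~\ref{usual}, and in the degenerate case peel off a linear polynomial $\ell_1$ via Proposition~\ref{exsos}, normalize to $\ell_1=X_n$, apply the induction hypothesis to $L'=L(X_1,\dots,X_{n-1},0)$, and observe that lifting back only perturbs everything by elements of $(\ell_1)$. The easy direction is also handled the same way.

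There is one small gap: after invoking Proposition~\ref{exsos} you write that ``after an affine linear change of variables we may assume $\ell_1=X_n$,'' but Proposition~\ref{exsos} only guarantees $\ell_1\neq 0$, and when $S_L$ is (strongly) infeasible the construction can return $\ell_1=1$, a non-zero \emph{constant}. No affine substitution turns a constant into $X_n$, so that step is invalid as stated. The fix is trivial and is made explicit in the paper: if $\ell_1$ is a non-zero constant then $(\ell_1)=\rx$, so one may take all remaining $\ell_i$, $S_i$, $S$ and $c$ to be zero, and both \eqref{indsos1} and \eqref{indsos2} hold vacuously. With that case spliced in, your argument is correct and matches the paper's.
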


\begin{proof}
We first prove that $f\ge0$ on $S_L$ in the presence of \eqref{indsos1} and \eqref{indsos2}.

The traces in \eqref{indsos1} and \eqref{indsos2} are elements of $M_L$ and therefore nonnegative on $S_L$.
Hence it is clear that constraint \eqref{indsos2} gives $f\ge0$ on $S_L$ if we show that $\ell_i$ vanishes on $S_L$
for all $i\in\{1,\dots,n\}$. Fix $i\in\{1,\dots,n\}$ and assume by induction that $\ell_1,\dots,\ell_{i-1}$ vanish on $S_L$. Then
\eqref{indsos1} implies $\ell_i^2+\tr(LS_i)$ vanishes on $S_L$ and therefore also $\ell_i$.

Conversely, suppose now that $f\ge0$ on $S_L$. We will obtain the data with properties \eqref{indsos1} and \eqref{indsos2}
by induction on the number of variables $n\in\N_0$.

To do the induction basis, suppose first that $n=0$. Then $f\ge0$ on $S_L$ just means that the real number $f$ is
nonnegative if $L\in\sm$ is positive semidefinite. But if $f\ge0$, then it suffices to choose $c:=f\ge0$ and $S:=0$
to obtain \eqref{indsos2} with $n=0$, and the other condition \eqref{indsos1} is empty since $n=0$.
We now assume that $f<0$ and therefore $L\not\succeq0$. Then we choose $u\in\R^m$ with
$u^*Lu=f$. Setting $S:=uu^*\in\smpsd$ and $c:=0$, we have $$f-c-\tr(LS)=f-u^*Lu=f-f=0,$$ as required.

For the induction step, we now suppose that $n\in\N$ and that we know already how to find the required data for linear
pencils in $n-1$ variables. We distinguish two cases and will use the induction hypothesis only in the second one.

{\bf Case 1.} $S_L$ contains an interior point.

In this case, we set all $\ell_i$ and $S_i$ to zero so that \eqref{indsos1} is trivially satisfied. Property
\eqref{indsos2} can be fulfilled by Proposition \ref{usual}.

{\bf Case 2.} The interior of $S_L$ is empty.

In this case, we apply
Proposition~\ref{exsos} to obtain $0\neq\ell_1\in\rx_1$ and a quadratic sos-matrix $S_1\in S\rx^{m\times m}$ with
\begin{equation}\label{anfang}
\ell_1^2+\tr(LS_1)=0.
\end{equation}
The case where $\ell_1$ is constant is trivial. In fact, in this case we can choose all remaining data being zero since
$(\ell_1,\dots,\ell_i)=(\ell_1)=\rx$ for all $i\in\{1,\dots,n\}$.

From now on we therefore assume $\ell_1$ to be non-constant. But then the reader easily checks that there is no harm
carrying out an affine linear variable transformation which allows us to assume $\ell_1=X_n$.
We then apply the induction hypothesis to the linear pencil $L':=L(X_1,\dots,X_{n-1},0)$ and the linear polynomial
$f':=f(X_1,\dots,X_{n-1},0)$ in $n-1$ variables to obtain
$\ell_2,\dots,\ell_n\in\rx$, quadratic sos-matrices $S_2,\dots,S_n\in S\rx^{m\times m}$, a matrix $S\in\smpsd$ and
a constant $c\ge0$ such that
\begin{align}
\ell_i^2+\tr(L'S_i)&\in(\ell_2,\dots,\ell_{i-1})\quad\text{for}\quad i\in\{2,\dots,n\}\qquad\text{and}\label{isos1}\\
f'-c-\tr(L'S)&\in(\ell_2,\dots,\ell_n).\label{isos2}
\end{align}
Noting that both $f-f'$ and $\tr(LS_i)-\tr(L'S_i)=\tr((L-L')S_i)$ are contained in the ideal $(X_n)=(\ell_1)$,
we see that \eqref{isos1} together with \eqref{anfang}
implies \eqref{indsos1}. In the same manner, \eqref{isos2} yields \eqref{indsos2}.
\end{proof}

\subsection{The real radical and Schur complements}\label{subs:schur}

Let $L\in S\rx^{m\times m}$ be a linear pencil and $q\in\rx$ a (quadratic) sos-polynomial such that
$-q=\tr(LS)$ for some (quadratic) sos-matrix $S$ like in \eqref{eq:lowdim} above. In order to resolve the third issue mentioned
in Subsection \ref{subs:lowdim}, we would like to get our hands on (cubic) polynomials vanishing on $\{q=0\}$. In other words, we want to
implement the ideals appearing in \eqref{indsos1} and \eqref{indsos2} in an SDP.

\def\cI{\mathcal I}

Recall that for any ideal $\cI\subseteq\rx$, its \emph{radical} $\sqrt\cI$ and its \emph{real radical} $\rrad\cI$ are the ideals defined by
\begin{align*}
\rad\cI&:=\{p\in\rx\mid \exists k\in\N:f^k\in\cI\}\qquad\text{and}\\
\rrad\cI&:=\{p\in\rx\mid \exists k\in\N:\exists s\in\sum\rx^2:f^{2k}+s\in\cI\}.
\end{align*}
An ideal $\cI\subseteq\rx$ is called \emph{radical} if $\cI=\rad\cI$ and \emph{real radical} if $\cI=\rrad\cI$.

By the Real Nullstellensatz \cite{bcr,ma,pd}, each polynomial vanishing on the real zero set $\{q=0\}$ of $q$
lies in $\rrad{(q)}$. This gives a strategy of how to find the cubic polynomials vanishing on
$\{q=0\}$, cf. Proposition \ref{matrad} and Lemma \ref{vepaco} below. The Real Nullstellensatz plays only a motivating role for us.
We will only use its trivial converse: Each element of $\rrad{(q)}$ vanishes on $\{q=0\}$.

The question is now how to model the search for elements in the real radical ideal by SDP. The key to this will be to represent polynomials
by matrices as it is done in the Gram matrix method mentioned at the beginning of Section \ref{sec:ram}. For this we have to introduce notation.

For each $d\in\N_0$, let $\s d:=\dim\rx_d=\binom{d+n}n$ denote the number of monomials of degree at most $d$ in $n$ variables
and $\vecx d\in\rx^{\s d}$ the column vector
$$\vecx d:=\begin{bmatrix}1&X_1&X_2&\dots&X_n&X_1^2&X_1X_2&\dots&\dots&X_n^d\end{bmatrix}^*$$
consisting of these monomials ordered first with respect to the degree and then lexicographic.

The following proposition shows how to find elements of degree at most $d+e$ (represented by a matrix $W$) in the real radical
$\cI:=\rrad{(q)}$ of the ideal generated by a polynomial $q\in\rx_{2d}$
(represented by a symmetric matrix $U$, i.e., $q=\vecx d^*U\vecx d$). We will later use it with $d=1$ and $e=2$ since $q$ will be
quadratic and we will need cubic elements in $\cI$.
Note that $$U\succeq W^*W\iff\begin{bmatrix}I&W\\W^*&U\end{bmatrix}\succeq0$$
by the method of Schur complements.

\begin{proposition}\label{matrad}
Let $d,e\in\N_0$,  $\cI$ a real radical ideal of $\rx$ and $U\in S\R^{s(d)\times s(d)}$ such that
$\vecx d^*U\vecx d\in\cI$.  Suppose $W\in\R^{s(e)\times s(d)}$ with $U\succeq W^*W$. Then $\vecx e^*W\vecx d\in\cI$.
\end{proposition}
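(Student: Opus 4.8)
The plan is to turn the Schur-complement hypothesis $U \succeq W^*W$ into a sum-of-squares identity and then invoke the defining property of a real radical ideal.

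First I would observe that $U - W^*W$ is a symmetric positive semidefinite \emph{constant} matrix, hence a sum of rank-one terms: $U - W^*W = \sum_i v_i v_i^*$ with $v_i \in \R^{s(d)}$. Conjugating by the monomial vector $\vecx d$ and abbreviating $g := W\vecx d \in \rx^{s(e)}$, this yields the polynomial identity
\[
\sum_{j=1}^{s(e)} g_j^2 + \sum_i (v_i^*\vecx d)^2 = \vecx d^* W^*W \vecx d + \vecx d^*(U-W^*W)\vecx d = \vecx d^* U \vecx d ,
\]
whose right-hand side lies in $\cI$ by assumption. So a finite sum of squares of polynomials lies in $\cI$.

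Next I would invoke the elementary fact that if $\sum_k a_k^2 \in \cI$ for a real radical ideal $\cI$, then $a_k \in \cI$ for every $k$: indeed $a_k^2 + \sum_{l\neq k} a_l^2 \in \cI$ exhibits $a_k \in \rrad{\cI} = \cI$ directly from the definition of $\rrad{\,\cdot\,}$, taking exponent $1$ and $s := \sum_{l\neq k} a_l^2 \in \sum\rx^2$. Applied to the identity above, this forces $g_j = (W\vecx d)_j \in \cI$ for all $j$.

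Finally, since $\cI$ is an ideal, $\vecx e^* W \vecx d = \sum_{j=1}^{s(e)} (\vecx e)_j\,(W\vecx d)_j$ is an $\rx$-linear combination of elements of $\cI$ and hence lies in $\cI$, which is the assertion. I do not expect a genuine obstacle here; the one step that really needs care is the passage from ``a sum of squares lies in $\cI$'' to ``each base lies in $\cI$'', which is precisely where realness of the radical, rather than ordinary radicality, is used.
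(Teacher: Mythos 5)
Your proof is correct and matches the paper's argument essentially verbatim: both decompose $U-W^*W$ into a sum of squares (you use rank-one terms $\sum_i v_iv_i^*$, the paper writes $B^*B$, which is the same thing), pass to the polynomial identity giving a sum of squares in $\cI$, extract each summand's base using that $\cI$ is real radical, and conclude via the ideal property. The point you flag as needing care—that realness, not mere radicality, is what lets you extract each $g_j$—is indeed the crux and is handled the same way in the paper.
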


\begin{proof}
Since $U-W^*W$ is positive semidefinite, we find $B\in\R^{s(d)\times s(d)}$ with
$U-W^*W=B^*B$.
Now let $p_i\in\rx$ denote the $i$-th entry of $W\vecx d$ and $q_j$ the $j$-th entry of $B\vecx d$.
From
\begin{align*}
p_1^2+\dots+p_{s(e)}^2+q_1^2+\dots+q_{s(d)}^2&=(W\vecx d)^*W\vecx d+(B\vecx d)^*B\vecx d\\
&=\vecx d^*(W^*W+B^*B)\vecx d=\vecx d^*U\vecx d\in\cI
\end{align*}
it follows that $p_1,\dots,p_{s(e)}\in\cI$ since $\cI$ is real radical. Now
$$\vecx e^*W\vecx d=\vecx e^*[p_1\dots p_{s(e)}]^*=[p_1\dots p_{s(e)}]\vecx e\in\cI$$
since $\cI$ is an ideal.
\end{proof}

The following lemma is a weak converse to Proposition~\ref{matrad}. Its proof relies heavily
on the fact that only linear and quadratic polynomials are involved.

\begin{lemma}\label{vepaco}
Suppose $\ell_1,\dots,\ell_t\in\rxlin$ and $q_1,\dots,q_t\in\rx_2$. Let $U\in S\R^{s(1)\times s(1)}$ be such that
$$\vecx 1^*U\vecx 1=\ell_1^2+\dots+\ell_t^2.$$
Then there exists $\la>0$ and $W\in\R^{s(2)\times s(1)}$ such that $\la U\succeq W^*W$ and
$$\vecx 2^*W\vecx 1=\ell_1q_1+\dots+\ell_t q_t.$$
\end{lemma}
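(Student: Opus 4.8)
The plan is to build $W$ explicitly, column by column, mimicking the structure of a square root of $U$ but ``multiplying up'' in degree. Since $\vv{[\x]}_1^*U\vv{[\x]}_1=\ell_1^2+\dots+\ell_t^2$, write $\ell_k=\ell_k^{(0)}+\sum_{i=1}^n\ell_k^{(i)}X_i$ and let $c_k\in\R^{s(1)}$ be the coefficient vector of $\ell_k$, so that $U=\sum_{k=1}^t c_kc_k^*$ modulo the ambiguity coming from symmetrization of the quadratic form (the identity $\vv{[\x]}_1^*U\vv{[\x]}_1=\ell_1^2+\dots+\ell_t^2$ only pins down the symmetric part of $\sum_k c_kc_k^*$, but $U$ being symmetric we may take $U=\sum_k c_kc_k^*$ after the usual symmetrization). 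First I would note that it therefore suffices to handle a single term, i.e.\ to find for each $k$ a matrix $W_k\in\R^{s(2)\times s(1)}$ and $\la_k>0$ with $\la_k\, c_kc_k^*\succeq W_k^*W_k$ and $\vv{[\x]}_2^*W_k\vv{[\x]}_1=\ell_k q_k$; then $W:=\sum_k W_k$ and $\la:=t\max_k\la_k$ work, because $\la\sum_k c_kc_k^*\succeq t\sum_k\la_k c_kc_k^*\succeq t\sum_k W_k^*W_k\succeq(\sum_k W_k)^*(\sum_k W_k)$ by convexity/Cauchy--Schwarz for the matrices $W_k$.

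Next, for a single pair $(\ell,q)$ with $\ell\in\R[\x]_1$, $q\in\R[\x]_2$, I want $W\in\R^{s(2)\times s(1)}$ of rank one, $W=v\,c^*$ where $c$ is the coefficient vector of $\ell$ and $v\in\R^{s(2)}$ is the coefficient vector of a quadratic polynomial $g$ to be chosen so that $\vv{[\x]}_2^*W\vv{[\x]}_1=(v^*\vv{[\x]}_2)(c^*\vv{[\x]}_1)=g\cdot\ell=\ell q$, i.e.\ $g$ should ``equal $q$''. This is where the degree restriction is essential: $g=q$ works literally, since $q\in\R[\x]_2$ is representable by a coefficient vector $v\in\R^{s(2)}$, and then $\vv{[\x]}_2^*W\vv{[\x]}_1=q\ell=\ell q$ exactly, as polynomials. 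With $W=vc^*$ we get $W^*W=(v^*v)\,cc^*=\|q\|^2\,cc^*$, so $\la\,cc^*\succeq W^*W$ holds with $\la:=\|q\|^2$ (or $\la:=\|q\|^2+1$ to be safe about strictness, though $\succeq$ is all that is asked). Assembling over $k$ as above gives $\la U\succeq W^*W$ with $\la=t\max_k(\|q_k\|^2)$ and $\vv{[\x]}_2^*W\vv{[\x]}_1=\sum_k\ell_kq_k$, as required.

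The one genuine subtlety — and the step I would be most careful about — is the passage from the polynomial identity $\vv{[\x]}_1^*U\vv{[\x]}_1=\sum_k\ell_k^2$ to a matrix identity for $U$: the monomial vector $\vv{[\x]}_1=(1,X_1,\dots,X_n)^*$ has \emph{linearly independent} entries, so a symmetric $U$ is uniquely determined by the quadratic form $\vv{[\x]}_1^*U\vv{[\x]}_1$, and the symmetrization of $\sum_k c_kc_k^*$ is exactly that symmetric matrix; hence $U=\mathrm{sym}(\sum_k c_kc_k^*)=\sum_k c_kc_k^*$ since each $c_kc_k^*$ is already symmetric. (In degree $2$ the analogous step would fail because $\vv{[\x]}_2$ has dependent entries, which is precisely why the lemma is only a \emph{weak} converse and why we are allowed the factor $\la$ and only need the identity $\vv{[\x]}_2^*W\vv{[\x]}_1=\sum\ell_kq_k$ rather than a matrix equation.) Everything else is the elementary convexity estimate $(\sum_k W_k)^*(\sum_k W_k)\preceq t\sum_k W_k^*W_k$, which follows from $(W_i-W_j)^*(W_i-W_j)\succeq0$.
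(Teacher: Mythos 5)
Your proof is correct. The matrix $W$ you construct is in fact the same as the one in the paper's proof: there $W$ is defined by $W\vecx 1=\sum_i\ell_ic_i$ with $c_i\in\R^{s(2)}$ the coefficient vector of $q_i$, which coincides with your $W=\sum_k v_kc_k^*$ after matching names. The difference lies in how the inequality $\la U\succeq W^*W$ is established. The paper evaluates the quadratic form at points of the affine slice $\{(1,x):x\in\R^n\}$, derives the bound pointwise via an arithmetic-mean estimate, and then extends to all of $\R^{s(1)}$ by homogeneity and continuity. You instead pin down $U$ exactly as the matrix $\sum_kc_kc_k^*$ (valid since the entries of $\vecx 1$ are linearly independent, so a symmetric matrix is determined by its associated quadratic polynomial), reduce to the rank-one pieces $W_k=v_kc_k^*$ with $W_k^*W_k=(v_k^*v_k)\,c_kc_k^*$, and finish with the elementary matrix Cauchy--Schwarz inequality $\big(\sum_k W_k\big)^*\big(\sum_k W_k\big)\preceq t\sum_kW_k^*W_k$. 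This buys you a purely algebraic argument that dispenses with the homogeneity/continuity step, at the small cost of having to justify $U=\sum_kc_kc_k^*$ (which you do correctly). One cosmetic gap: if every $q_k=0$, then $\la=t\max_k v_k^*v_k=0$, whereas the lemma asks for $\la>0$; simply take any positive $\la$ in that degenerate case, as you already gesture at.
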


\begin{proof}
Suppose that at least one $q_i\neq0$ (otherwise take $W=0$).
Choose column vectors $c_i\in\R^{s(2)}$ such that $c_i^*\vecx2=q_i$.
Now let $W\in\R^{s(2)\times s(1)}$ be the matrix defined by $W\vecx1=\sum_{i=1}^t\ell_ic_i$ so that
$$\vecx2^*W\vecx1=\sum_{i=1}^t\ell_i\vecx2^*c_i=\sum_{i=1}^t\ell_ic_i^*\vecx2=
\sum_{i=1}^t\ell_iq_i.$$
Moreover, we get
$$\vecx1^*W^*W\vecx1=(W\vecx1)^*W\vecx1=\sum_{i,j=1}^t(\ell_ic_i)^*(\ell_jc_j)$$
and therefore for all $x\in\R^n$
\begin{align*}
\begin{bmatrix}1&x_1&\dots&x_n\end{bmatrix}W^*W\begin{bmatrix}1\\x_1\\\vdots\\x_n\end{bmatrix}
&=\sum_{i,j=1}^t(\ell_i(x)c_i)^*(\ell_j(x)c_j)\\
&\le\frac12\sum_{i,j=1}^t((\ell_i(x)c_i)^*(\ell_i(x)c_i)+(\ell_j(x)c_j)^*(\ell_j(x)c_j))\\
&\le t\sum_{i=1}^t(\ell_i(x)c_i)^*(\ell_i(x)c_i)\le\la\sum_{i=1}^t\ell_i(x)^2,
\end{align*}
where we set $\la:=t\sum_{i=1}^tc_i^*c_i>0$. Therefore
$$\begin{bmatrix}1&x_1&\dots&x_n\end{bmatrix}(\la U-W^*W)\begin{bmatrix}1&x_1&\dots&x_n\end{bmatrix}^*\ge0$$
for all $x\in\R^n$. By homogeneity and continuity this implies $y^*(\la U-W^*W)y\ge0$ for all
$y\in\R^{s(1)}$, i.e., $\la U\succeq W^*W$.
\end{proof}

\subsection{The sums of squares dual of an SDP}

Given an SDP of the form (P) described in Subsection~\ref{subs:standard}, the following is what we call its
\emph{sums of squares dual}:
$$
\begin{array}[t]{rll}
\text{maximize}&a\\
\text{subject to}&S\in\sm,\ S\succeq0,\ a\in\R\\
&S_1,\dots,S_n\in S\rx^{m\times m}\text{ quadratic sos-matrices}\\
&U_1,\dots,U_n\in S\R^{s(1)\times s(1)}\\
&W_1,\dots,W_n\in\R^{s(2)\times s(1)}\\
&\vecx1^*U_i\vecx1+\vecx2^*W_{i-1}\vecx1+\tr(LS_i)=0&(i\in\{1,\dots,n\})\\
&U_i\succeq W_i^*W_i&(i\in\{1,\dots,n\})\\
&\ell-a+\vecx2^*W_n\vecx1-\tr(LS)=0.
\end{array}
$$
Just like Ramana's extended Lagrange-Slater dual \cite{ra} it can be written down in polynomial time (and hence has polynomial size)
in the bit size of the primal (assuming the latter has rational coefficients) and it guarantees that strong duality (i.e., weak duality,
zero gap and dual attainment, see Subsection \ref{subs:standard}) always holds.
Similarly, the facial reduction \cite{bw,TW} gives rise to a good duality theory of SDP. We refer the reader to
\cite{Pat} for a unified treatment of these two constructions.

As mentioned at the beginning of Section \ref{sec:ram}, the quadratic sos-matrices can easily be modeled by SDP constraints using the
Gram matrix method, and the polynomial identities can be written as linear equations by comparing coefficients.

The $S_i$ serve to produce negated quadratic sos-polynomials vanishing on $S_L$ (cf. Proposition \ref{exsos}) which are captured by the
matrices $U_i$. From this, cubics vanishing on $S_L$ are produced (cf. Subsection \ref{subs:schur}) and represented by the matrices
$W_i$. These cubics serve to implement the congruence modulo the ideals from \eqref{indsos1} and \eqref{indsos2}. The whole procedure
is iterated $n$ times.

Just as Proposition \ref{usual} corresponds to the standard SDP duality, Theorem \ref{sosram} translates into the strong duality for the sums of squares
dual. Before we come to it, we need a folk lemma which is well-known (e.g.~from the theory of Gröbner bases) and which we prove
for the convenience of the reader.

\begin{lemma}\label{folk}
Suppose $d\in\N$, $f\in\rx_d$ and $\ell_1,\dots,\ell_t\in\rx_1$ are non-constant linear polynomials such that
$f\in(\ell_1,\dots,\ell_t)$. Then there exist
$p_1,\dots,p_t\in\rx_{d-1}$ such that $f=p_1\ell_1+\dots+p_t\ell_t$.
\end{lemma}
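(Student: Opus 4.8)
The plan is to argue by induction on $t$, the number of linear polynomials, keeping track of degrees as we go. The statement is trivially true for $t=0$ (then $f\in(0)$ forces $f=0$, and the empty sum works), so suppose $t\ge 1$ and that the claim holds for fewer than $t$ non-constant linear polynomials. Since $f\in(\ell_1,\dots,\ell_t)$, we may write $f=g_1\ell_1+\dots+g_t\ell_t$ for \emph{some} $g_i\in\rx$, a priori of arbitrary degree. The first move is to perform an affine linear change of variables so that $\ell_1=X_n$; such a substitution is invertible and preserves the degree of every polynomial, so it suffices to prove the statement in these coordinates and then transform back. (One must note the $\ell_j$ for $j\ge 2$ need not remain non-constant after the substitution; if some $\ell_j$ becomes constant, it equals a nonzero scalar times $1$, hence $(\ell_1,\dots,\ell_t)=\rx$ and we may simply take $p_j:=f$ and all other $p_i:=0$, with $\deg f\le d$, so $\deg(f)\le d$ which is $\le d-1$ only if\ldots — better: in that degenerate case discard $\ell_j$ and absorb, or just observe $1\in(\ell_1,\dots,\ell_t)$ makes the conclusion hold with $p_j=f$ of degree $d$; to get degree $d-1$ we instead note $f = f\cdot 1$ and rescale $\ell_j$, writing $f = (f/c)\ell_j$ where $\ell_j\equiv c\in\R\setminus\{0\}$, and $\deg(f/c)=d$, still not $d-1$. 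So handle this by peeling: if some $\ell_j$ is a nonzero constant we do not need the hypothesis at all — actually we should simply assume WLOG all $\ell_j$ stay non-constant, which can be arranged by choosing the variable change generically, or treat the constant case separately as above with the harmless observation that a constant $\ell_j$ trivializes the ideal.)

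The core step: with $\ell_1=X_n$, divide $f$ by $X_n$ with remainder, i.e.\ write uniquely $f = X_n\cdot h + r$ where $r\in\R[X_1,\dots,X_{n-1}]$ is the part of $f$ not involving $X_n$, and $h\in\rx$; inspecting degrees gives $\deg h\le d-1$ and $\deg r\le d$. Substituting $X_n=0$ into the relation $f=g_1\ell_1+\dots+g_t\ell_t$ kills the $\ell_1$ term (since $\ell_1=X_n$) and yields $r = r(X_1,\dots,X_{n-1},0) = \sum_{j=2}^t g_j(X_1,\dots,X_{n-1},0)\,\ell_j(X_1,\dots,X_{n-1},0)$, which exhibits $r$ as an element of the ideal generated by the polynomials $\bar\ell_j:=\ell_j(X_1,\dots,X_{n-1},0)$ in the ring $\R[X_1,\dots,X_{n-1}]$; these $\bar\ell_j$ are linear, and (modulo the constant-case caveat above) non-constant. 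Here $\deg r\le d$, so the induction hypothesis — applied in $n-1$ variables with degree parameter $d$ — is not quite what we want; we must be slightly careful that the induction is on $t$, not on $n$, so we apply the inductive hypothesis for $t-1$ polynomials to get $r=\sum_{j=2}^t p_j\ell_j$ with $p_j\in\rx_{d-1}$, where we may regard the $p_j,\ell_j$ as polynomials in all $n$ variables (they happen not to involve $X_n$, which only helps). Finally set $p_1:=h$, which has degree $\le d-1$, and conclude $f = X_n h + r = p_1\ell_1 + \sum_{j=2}^t p_j\ell_j$ with all $p_i\in\rx_{d-1}$, as required; undoing the variable change preserves all degree bounds.

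The main obstacle is bookkeeping rather than depth: one must choose the inductive parameter carefully (induct on $t$ and handle, within the step, the possibility that the coordinate change makes some $\ell_j$ constant — in which case the ideal is all of $\rx$ and the claim is immediate by taking that $p_j=f$ after rescaling, except this gives degree $d$; the clean fix is to stipulate that if $1\in(\ell_1,\dots,\ell_t)$ we are in fact free to drop to the trivial subcase where one $\ell_j$ is itself constant only if we first \emph{remove} constant generators, and a constant generator among non-constant ones is impossible by hypothesis — so after a \emph{generic} affine change of variables, or equivalently a change fixing $\ell_1=X_n$ and generic otherwise, all $\ell_j$ remain non-constant, which is the honest route). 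Once the reduction to $\ell_1=X_n$ and the division-by-$X_n$ trick are set up, everything else is routine degree counting and a single application of the inductive hypothesis.
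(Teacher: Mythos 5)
Your proof has the same overall shape as the paper's: normalize $\ell_1$ to a coordinate, split off an $\ell_1$-multiple of $f$, and apply the induction hypothesis with $t-1$ generators to what remains (the paper writes $\ell_1=X_1-\ell$ and substitutes $X_1\mapsto\ell$; you write $\ell_1=X_n$ and restrict to $X_n=0$ — these are the same move).

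The issue you circle around but never close is a genuine gap, and your proposed fixes do not work. After the coordinate change making $\ell_1=X_n$, the restricted generator $\bar\ell_j:=\ell_j|_{X_n=0}$ is constant precisely when $\ell_j\in\Span(1,\ell_1)$; this is an intrinsic, coordinate-free condition, so no ``generic'' choice of the change of variables avoids it. Your fallback (rescale, write $f=(f/c)\ell_j$) produces a cofactor of degree $d$, not $d-1$, exactly as you noticed. There is also a small bookkeeping slip at the end: the induction hypothesis gives $r=\sum_{j\ge2}p_j\bar\ell_j$, not $r=\sum_{j\ge2}p_j\ell_j$; you still need to substitute $\bar\ell_j=\ell_j-c_jX_n=\ell_j-c_j\ell_1$ and fold the resulting $\ell_1$-terms into $p_1$. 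More importantly, the lemma as stated is actually false, and the case you could not handle is exactly where it fails: take $n=2$, $d=1$, $f=X_1$, $\ell_1=X_2$, $\ell_2=X_2+1$. Then $\ell_2-\ell_1=1$, so $f\in(\ell_1,\ell_2)=\rx$, but for constants $p_1,p_2\in\rx_0$ the polynomial $p_1\ell_1+p_2\ell_2$ never involves $X_1$. The statement becomes true (and both your proof and the paper's close) under the extra hypothesis that $\ell_1,\dots,\ell_t$ have a common zero, i.e.\ $(\ell_1,\dots,\ell_t)\neq\rx$: translate that zero to the origin, so every $\ell_j$ becomes homogeneous and every $\bar\ell_j$ is either zero (then $\ell_j$ is a scalar multiple of $\ell_1$ and may be dropped) or non-constant. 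This is what actually holds when the lemma is invoked in the proof of Theorem \ref{sosram}, since the nonzero $\ell_i$ produced by Theorem \ref{indsos} are, in suitable coordinates, a prefix of $X_n,X_{n-1},\dots$ and vanish at the origin. (The paper's own write-up has the same unaddressed problem: it asserts $g\in(\ell_2,\dots,\ell_t)$ after the substitution $X_1\mapsto\ell$, and that assertion is what fails in the example above.)
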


\begin{proof} We proceed by induction on $t\in\N_0$. For $t=0$, there is nothing to show.
Now let $t\in\N$ and suppose the lemma is already proved with $t$ replaced by $t-1$.
We may assume that $\ell_1=X_1-\ell$ with
$\ell\in\R[X_2,\dots,X_n]$ (otherwise permute the variables appropriately and scale~$\ell_1$). Write
$f=\sum_{|\al|\le d}a_\al\x^\al$ with $a_\al\in\R$. Setting $g:=f(\ell,X_2,\dots,X_n)$, we have
$$f-g=\sum_{\genfrac{}{}{0pt}{1}{|\al|\le d}{1\le\al_1}}
a_\al(X_1^{\al_1}-\ell^{\al_1})X_2^{\al_2}\dotsm X_n^{\al_n}=p_1(X_1-\ell)=p_1\ell_1$$
where $p_1:=\sum_{\genfrac{}{}{0pt}{1}{|\al|\le d}{1\le\al_1}}
a_\al\left(\sum_{i=0}^{\al_1-1}X_1^i\ell^{\al_1-1-i}\right)X_2^{\al_2}\dotsm X_n^{\al_n}\in\rx_{d-1}$.
Moreover, $g\in(\ell_2,\dots,\ell_t)$ and therefore $g=p_2\ell_2+\dots+p_t\ell_t$ for some $p_2,\dots,p_t\in\rx_{d-1}$ by
induction hypothesis. Now 
\[
f=(f-g)+g=p_1\ell_1+\dots+p_t\ell_t.
\qedhere\]
\end{proof}

\begin{theorem}[Sums of squares SDP duality]\label{sosram}
Let $L\in S\rx^{m\times m}$ be a linear pencil and $f\in\rx_1$. Then $$f\ge0\text{ on }S_L$$ if and only if there exist
quadratic sos-matrices
$S_1,\dots,S_n\in S\rx^{m\times m}$, matrices $U_1,\dots,U_n\in S\R^{s(1)\times s(1)}$, $W_1,\dots,W_n\in\R^{s(2)\times s(1)}$,
$S\in\smpsd$ and $c\in\R_{\ge0}$ such that
\begin{align}
&\vecx1^*U_i\vecx1+\vecx2^*W_{i-1}\vecx1+\tr(LS_i)=0&(i\in\{1,\dots,n\}),\label{sosram1}\\
&U_i\succeq W_i^*W_i&(i\in\{1,\dots,n\}),\label{sosram2}\\
&f-c+\vecx2^*W_n\vecx1-\tr(LS)=0,\label{sosram3}
\end{align}
where $W_0:=0$.
\end{theorem}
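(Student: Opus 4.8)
The plan is to deduce Theorem~\ref{sosram} from Theorem~\ref{indsos} by translating the ideal memberships \eqref{indsos1} and \eqref{indsos2} into the matrix language of Subsection~\ref{subs:schur}. The forward direction ($\Leftarrow$) is the easy one: given the data $S_i,U_i,W_i,S,c$ satisfying \eqref{sosram1}--\eqref{sosram3}, I would set $q_i:=\vecx1^*U_i\vecx1$ and $\ell_i:=\vecx2^*W_{i-1}\vecx1$ (a cubic, not linear, but that is harmless) and argue by induction on $i$ that every $q_i$ and every $\vecx2^*W_i\vecx1$ vanishes on $S_L$. Indeed \eqref{sosram2} together with the Schur-complement remark means $U_i\succeq W_i^*W_i$, so by Proposition~\ref{matrad} applied to the real radical ideal $\cI$ of the polynomials shown so far to vanish on $S_L$, membership propagates; here one uses that $\tr(LS_i)\in M_L$ is nonnegative on $S_L$, hence \eqref{sosram1} forces $q_i+\vecx2^*W_{i-1}\vecx1$, and then $q_i$ itself (being a sum of squares), to vanish on $S_L$, whence $\vecx1^*(U_i-W_i^*W_i)\vecx1\ge0$ gives that each entry of $W_i\vecx1$ vanishes there too. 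Finally \eqref{sosram3} plus $c\ge0$, $\tr(LS)\ge0$ on $S_L$ yields $f\ge0$ on $S_L$.

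For the converse, I would start from the certificate provided by Theorem~\ref{indsos}: linear polynomials $\ell_1,\dots,\ell_n$, quadratic sos-matrices $S_1,\dots,S_n$, $S\in\smpsd$, $c\ge0$ with \eqref{indsos1} and \eqref{indsos2}. The first step is to record each $\ell_i^2+\tr(LS_i)$, which lies in $(\ell_1,\dots,\ell_{i-1})$, using Lemma~\ref{folk}: since we may assume the $\ell_j$ are non-constant (the constant case collapses everything, as in the proof of Theorem~\ref{indsos}), there are $p_{ij}\in\rx_2$ with $\ell_i^2+\tr(LS_i)=\sum_{j<i}p_{ij}\ell_j$. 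Likewise $f-c-\tr(LS)=\sum_{j}p_{n+1,j}\ell_j$ with $p_{n+1,j}\in\rx_1$; replacing if necessary we still get $p_{n+1,j}\in\rx_2$. The second step is to build the matrices: choose $U_i\in S\R^{s(1)\times s(1)}$ with $\vecx1^*U_i\vecx1=\ell_i^2$ (a rank-one choice works since $\ell_i$ is linear), and then apply Lemma~\ref{vepaco} to the identity $\vecx1^*U_i\vecx1=\ell_i^2$ together with the single polynomial $q:=p_{i+1,i}$ (I need the combination $\ell_i\,p_{i+1,i}$ to be captured) to produce $\la_i>0$ and $W_i'\in\R^{s(2)\times s(1)}$ with $\la_i U_i\succeq W_i'^*W_i'$ and $\vecx2^*W_i'\vecx1=\ell_i p_{i+1,i}$. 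Rescaling $U_i\rightsquigarrow \la_i U_i$ (and absorbing the scalar into $S_i$) gives \eqref{sosram2}.

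The slightly delicate bookkeeping step is assembling \eqref{sosram1} and \eqref{sosram3}, because the term $\vecx2^*W_{i-1}\vecx1$ appearing at stage $i$ must equal $-\ell_i^2-\tr(LS_i)$ modulo what has been produced, and $\ell_i^2+\tr(LS_i)=\sum_{j<i}p_{ij}\ell_j$ is a \emph{sum} of products $\ell_j p_{ij}$ over $j<i$, not a single one. So I would instead, for each index $j$, let $q_{j}$ be the polynomial $\sum_{i>j}(\text{appropriate }p_{ij})$ suitably combined — more cleanly: run the iteration exactly as in the proof of Theorem~\ref{indsos}, reducing one variable at a time with $\ell_1=X_n$, so that at each stage only \emph{one} new linear form is introduced and the ideal membership $\ell_i^2+\tr(LS_i)\in(\ell_1,\dots,\ell_{i-1})$ is realized through the substitution $X_n\mapsto 0$; tracking how $L$ and $f$ change under that substitution, the discrepancies lie in $(X_n)=(\ell_1)$ and can be packaged, via Lemma~\ref{vepaco}, into $W_{i-1}$. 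Concretely: build $U_i$ from $\ell_i=X_n$ at the appropriate stage, and let $W_{i-1}$ encode $\ell_{i-1}\cdot r_{i-1}$ where $r_{i-1}\in\rx_2$ is the cofactor of $\ell_{i-1}$ in the difference between the stage-$i$ identity and the stage-$(i-1)$ identity. I expect the main obstacle to be precisely this matching of the single-term output of Lemma~\ref{vepaco} with the multi-term ideal memberships of Theorem~\ref{indsos}; it is resolved by committing to the one-variable-at-a-time iteration (so each $\ell_i$ is a coordinate) rather than treating all $\ell_i$ simultaneously, after which every remaining verification — degrees at most those dictated by $s(1),s(2)$, the Schur-complement reformulation of \eqref{sosram2}, and checking \eqref{sosram3} with $W_0=0$ — is routine.
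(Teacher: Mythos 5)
Your $(\Leftarrow)$ direction is essentially the paper's argument: the inductive propagation $\vecx2^*W_{i-1}\vecx1\in\cI$, then $\vecx1^*U_i\vecx1\in\cI$ (using \eqref{sosram1} and $U_i\succeq0$), then Proposition~\ref{matrad} to get $\vecx2^*W_i\vecx1\in\cI$; this is correct.

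The $(\Rightarrow)$ direction, however, has a genuine gap, and it centers on a misreading of Lemma~\ref{vepaco}. You treat that lemma as producing a $W$ that encodes a \emph{single} product $\ell q$, and you flag as the ``main obstacle'' the fact that the ideal memberships from Lemma~\ref{folk} give \emph{sums} $\sum_{j<i}p_{ij}\ell_j$. But Lemma~\ref{vepaco} is stated and proved precisely for several $\ell_1,\dots,\ell_t$ and $q_1,\dots,q_t$ at once: from $\vecx1^*U\vecx1=\ell_1^2+\dots+\ell_t^2$ it produces $W$ with $\vecx2^*W\vecx1=\ell_1q_1+\dots+\ell_tq_t$. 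The obstacle you identify therefore does not exist; what is missing in your construction is the right choice of $U_i$. If you take $U_i$ to be rank one with $\vecx1^*U_i\vecx1=\ell_i^2$, you can only produce $W_i$ encoding multiples of $\ell_i$, which is not enough. The correct device is to define $U_i'$ by $\vecx1^*U_i'\vecx1=\ell_1^2+\dots+\ell_i^2$ (the \emph{cumulative} sum of squares), which is legitimate because \eqref{indsos1} can trivially be rewritten as $\ell_1^2+\dots+\ell_i^2+\tr(LS_i')\in(\ell_1,\dots,\ell_{i-1})$ — the extra squares $\ell_1^2,\dots,\ell_{i-1}^2$ lie in the ideal. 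Then Lemma~\ref{vepaco} applied with all of $\ell_1,\dots,\ell_i$ and the cofactors $q_{ij}$ from Lemma~\ref{folk} produces $W_i'$ with $\vecx2^*W_i'\vecx1=-\sum_{j\le i}q_{ij}\ell_j$, exactly matching the multi-term right-hand side.

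Your proposed workaround — insist on the one-variable-at-a-time reduction so that ``at each stage only one new linear form is introduced'' — does not resolve the difficulty either. Even when $\ell_1=X_n,\ \ell_2=X_{n-1},\dots$ are coordinates, the membership $\ell_i^2+\tr(LS_i)\in(\ell_1,\dots,\ell_{i-1})$ still involves \emph{all} of $\ell_1,\dots,\ell_{i-1}$: the discrepancies $\tr(LS_i)-\tr(L'S_i)\in(\ell_1)$, $\tr(L'S_i)-\tr(L''S_i)\in(\ell_2)$, \dots\ accumulate, so the cofactor expansion is always a sum over $j<i$, not a single term. Finally, the rescaling step is also more delicate than ``absorb the scalar into $S_i$'': the constraint $U_i\succeq W_i^*W_i$ couples data produced at consecutive stages, because $W_i$ is defined from the stage-$(i+1)$ cofactors while $U_i$ comes from stage $i$. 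One needs a cascading choice $\la_1,\dots,\la_n>0$ with $\la_i\ge\la\la_{i+1}^2$ for $i<n$ and $\la_n\ge\la$, setting $U_i:=\la_iU_i'$, $W_{i-1}:=\la_iW_{i-1}'$, $S_i:=\la_iS_i'$, so that all of \eqref{sosram1}--\eqref{sosram3} hold simultaneously.
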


\begin{proof} We first prove that existence of the above data implies $f\ge0$ on $S_L$.
All we will use about the traces appearing in \eqref{sosram1} and \eqref{sosram3} is that they are polynomials nonnegative on $S_L$.
Let $\cI$ denote the real radical ideal of all polynomials vanishing on $S_L$.
It is clear that \eqref{sosram3} gives $f\ge0$ on $S_L$ if we show that
$\vecx2W_n\vecx1\in\cI$. In fact, we prove by induction that $\vecx2^*W_i\vecx1\in\cI$
for all $i\in\{0,\dots,n\}$.

The case $i=0$ is trivial since $W_0=0$ by definition. Let $i\in\{1,\dots,n\}$ be given and suppose that $\vecx2^*W_{i-1}\vecx1\in\cI$.
Then \eqref{sosram1} shows $\vecx1^*U_i\vecx1\le0$ on $S_L$. On the other hand \eqref{sosram2} implies in particular
$U_i\succeq0$ and therefore $\vecx1^*U_i\vecx1\ge0$ on $S_L$. Combining both, $\vecx1^*U_i\vecx1\in\cI$.
Now Proposition \ref{matrad}  implies $\vecx2^*W_i\vecx1\in\cI$ by \eqref{sosram2}.
This ends the induction and shows $f\ge0$ on $S_L$ as claimed.

Conversely, suppose now that $L$ is infeasible. By Theorem \ref{indsos} and Lemma \ref{folk}, we can choose
$\ell_1,\dots,\ell_n\in\rx_1$, quadratic sos-matrices $S_1',\dots,S_n'\in S\rx^{m\times m}$, $S\in\smpsd$ and
$q_{ij}\in\rx_2$ ($1\le j\le i\le n$) such that
\begin{align}
\ell_1^2+\dots+\ell_i^2+\tr(LS_i')&=\sum_{j=1}^{i-1}q_{(i-1)j}\ell_j&(i\in\{1,\dots,n\})\qquad\text{and}
\label{prep1}\\
f-c-\tr(LS)&=\sum_{j=1}^nq_{nj}\ell_j.\label{prep2}
\end{align}
There are two little arguments involved in this: First, \eqref{indsos1} can trivially be rewritten as
$\ell_1^2+\dots+\ell_i^2+\tr(LS_i')\in(\ell_1,\dots,\ell_{i-1})$ for $i\in\{1,\dots,n\}$. Second, in Lemma \ref{folk} the $\ell_i$ are
assumed to be non-constant but can be allowed to equal zero. But if some $\ell_i\neq0$ is constant, then we may set
$\ell_{i+1}=\dots=\ell_n=0$ and $S_{i+1}'=\dots=S_n'=S=0$.

Now define $U_i'\in S\R^{s(1)\times s(1)}$ by $\vecx1^*U_i'\vecx1=\ell_1^2+\dots+\ell_i^2$ for $i\in\{1,\dots,n\}$. Using Lemma \ref{vepaco},
we can then choose $\la>0$ and $W_1',\dots,W_n'\in\R^{s(2)\times s(1)}$ such that
\begin{equation}\label{cop}
\la U_i'\succeq W_i'^*W_i'
\end{equation}
and
$\vecx2^*W_i'\vecx1=-\sum_{j=1}^iq_{ij}\ell_j$ for $i\in\{1,\dots,n\}$. Setting $W_n:=W_n'$, equation \eqref{prep2}
becomes \eqref{sosram3}. Moreover, \eqref{prep1} can be rewritten as
\begin{equation}\label{rew}
\vecx1^*U_i'\vecx1+\vecx2^*W_{i-1}'\vecx1+\tr(LS_i')=0\qquad(i\in\{1,\dots,n\})
\end{equation}
To cope with the problem that $\la$ might be larger than $1$ in \eqref{cop}, we look for $\la_1,\dots,\la_n\in\R\pos$ such
that $U_i\succeq W_i^*W_i$ for all $i\in\{1,\dots,n\}$ if we define $U_i:=\la_iU_i'$ and $W_{i-1}:=\la_iW_{i-1}'$ for all
$i\in\{1,\dots,n\}$ (in particular $W_0=W_0'=0$). With this choice, the desired linear matrix inequality \eqref{sosram2} is now
equivalent to $\la_iU_i'\succeq\la_{i+1}^2W_i'^*W_i'$ for $i\in\{1,\dots,n-1\}$ and $\la_nU_n'\succeq W_n'^2$.
Looking at \eqref{cop}, we therefore see that any choice of the $\la_i$ satisfying $\la_i\ge\la\la_{i+1}^2$ for
$i\in\{1,\dots,n-1\}$ and $\la_n\ge\la$ ensures \eqref{sosram2}. Such a choice is clearly possible. Finally, equation
\eqref{rew} multiplied by $\la_i$ yields \eqref{sosram1} by setting $S_i:=\la_iS_i'$ for $i\in\{1,\dots,n\}$.
\end{proof}

\subsection{The real radical and the quadratic module}
Let $L$ be a linear pencil. In Definition \ref{def:scm}, we have introduced the convex cone $C_L\subseteq\rx_1$ and the quadratic module
$M_L\subseteq\rx$ associated to $L$  consisting of polynomials which are obviously nonnegative of the spectrahedron $S_L$.
In Theorem \ref{thm:main1}, we have shown that the quadratic module $M_L$ can be used to certify infeasibility of $L$, in the sense that
$S_L=\emptyset$ implies $-1\in M_L$. On the contrary, we have seen in Subsection \ref{subs:ex}, that the convex cone $C_L$ is in general too
small to detect infeasibility of $L$ in this way.

In this subsection, we turn over to the more general question of certifying nonnegativity of \emph{arbitrary} linear polynomials on $S_L$
(as opposed to just the constant polynomial $-1$).
The following example shows that $M_L$ (and henceforth its subset $C_L$) does not, in general, contain all linear polynomials nonnegative
on $S_L$.

\begin{example}\label{new2}
Consider $$L= \begin{bmatrix} 1 & X \\ X & 0\end{bmatrix}.$$
Then $S_L=\{0\}$. Hence obviously $X\geq0$ on $S_L$. But it is easy to see that $X\not\in M_L$ \cite[Example 2]{za}.
\end{example}

Despite this example,
Theorem \ref{indsos} and its SDP-implementable version Theorem \ref{sosram} \emph{do} on the other hand yield algebraic certificates
of linear polynomials nonnegative on $S_L$. These two theorems have the advantage of being very well-behaved with respect to complexity
issues but have the drawback of their statements being somewhat technical. Leaving complexity issues aside, one can however come back to a nice
algebraic characterization of linear polynomials nonnegative on $S_L$.

Given $f\in\rx_1$ with $f\ge0$ on $S_L$, the certificates in Theorems \ref{indsos} and \ref{sosram} for being nonnegative on $S_L$ can actually
be interpreted as certificates of $f$ lying in the convex cone $C_L+\rad{\supp M_L}$ by means of Prestel's theory of semiorderings.
Note that each element of $C_L+\rad{\supp M_L}$ is of course nonnegative $S_L$ since the elements of $\rad{\supp M_L}$ vanish on $S_L$.

Finally, this will allow us to come back to the quadratic module $M_L$. We will show that it contains each linear polynomial nonnegative
on $S_L$ \emph{after adding an arbitrarily small positive constant}, see Corollary \ref{cor:almostsat}.

In this subsection, basic familiarity with real algebraic geometry as presented e.g.~in \cite{bcr,ma,pd} is needed.
The following proposition follows easily from Prestel's theory of semiorderings on a commutative ring, see for example \cite[1.4.6.1]{sc}.

\begin{proposition}\label{prestel}
Let $M$ be a quadratic module in $\rx$. Then
$$\rad{\supp M}=\rrad{\supp M}=\bigcap\{\supp S\mid \text{$S$ semiordering of $\rx$}, M\subseteq S\}.$$
\end{proposition}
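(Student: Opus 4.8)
The plan is to prove the two stated equalities separately, the first one by a direct ideal-theoretic computation and the second by combining an elementary inclusion with Prestel's existence theorem for semiorderings. Throughout I write $\supp M=M\cap-M$ for the support; recall that $\supp M$ is an ideal (by \eqref{id4}), that every sum of squares lies in any quadratic module (since $1\in M$ and $M$ is closed under addition and multiplication by squares), and that the degenerate case $-1\in M$ is trivial: then $M=\rx$ by \eqref{id4}, $\supp M=\rx$, both radicals are $\rx$, there is no semiordering containing $M$, and the empty intersection is $\rx$. So assume $M$ is proper.

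For $\rad{\supp M}=\rrad{\supp M}$, the inclusion ``$\subseteq$'' holds for any ideal $I$ since $p^k\in I$ forces $p^{2k}=(p^k)^2\in I$. For ``$\supseteq$'' I would show that $\rad{\supp M}$ is a real ideal; granting this, $\supp M\subseteq\rad{\supp M}$ yields $\rrad{\supp M}\subseteq\rrad{\rad{\supp M}}=\rad{\supp M}$. To see $\rad{\supp M}$ is real, take $a_1,\dots,a_k\in\rx$ with $\sum_i a_i^2\in\rad{\supp M}$, say $(\sum_i a_i^2)^N\in\supp M$ for some $N\ge1$. Expanding the multinomial, $(\sum_i a_i^2)^N=a_j^{2N}+r_j$ with $r_j\in\sos$, for each $j$. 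Since $-(\sum_i a_i^2)^N\in M$ and $r_j\in M$, also $-a_j^{2N}=-(\sum_i a_i^2)^N+r_j\in M$; as $a_j^{2N}\in M$ too, we get $a_j^{2N}\in\supp M$, hence $a_j\in\rad{\supp M}$. Thus $\rad{\supp M}$ is real.

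For the inclusion $\rrad{\supp M}\subseteq\bigcap\{\supp S\mid S\text{ a semiordering of }\rx,\ M\subseteq S\}$, fix such a semiordering $S$. By Prestel's theory $\supp S$ is a prime ideal, and clearly $\supp M\subseteq\supp S$. Given $a\in\rrad{\supp M}$, choose $k\ge1$ and $s\in\sos$ with $a^{2k}+s\in\supp M\subseteq\supp S$. Then $s\in S$ and $-(a^{2k}+s)\in\supp S\subseteq S$, so $-a^{2k}=-(a^{2k}+s)+s\in S$; since $a^{2k}=(a^k)^2\in S$ as well, $a^{2k}\in S\cap-S=\supp S$, and primality gives $a\in\supp S$. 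Hence $a$ lies in every such $\supp S$.

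The reverse inclusion $\bigcap\{\supp S\mid S\text{ a semiordering of }\rx,\ M\subseteq S\}\subseteq\rad{\supp M}$ is the substantial point, and is where Prestel's theory of semiorderings enters essentially (see \cite[1.4.6.1]{sc}, cf.~\cite{pd}). I argue by contraposition: let $a\notin\rad{\supp M}$; then $a^2\notin\supp M$, and as $a^2\in M$ this gives $-a^2\notin M$. The goal is a semiordering $S$ of $\rx$ with $M\subseteq S$ and $a\notin\supp S$ — equivalently, since $a^2\in S$ always and $\supp S$ is prime, a semiordering $S\supseteq M$ with $-a^2\notin S$ — and this is precisely the sort of enlargement avoiding a prescribed non-element that Prestel's structure theory supplies for proper quadratic modules over a commutative ring. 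Once such an $S$ is at hand, $a^2\notin S\cap-S=\supp S$ (an ideal containing $a$ would contain $a^2$), so $a\notin\supp S$ and hence $a\notin\bigcap\{\supp S\mid\dots\}$, completing the proof. The routine parts are the multinomial computation and the quadratic-module bookkeeping; the one non-elementary input — and the main obstacle — is the existence of a semiordering containing $M$ and omitting $-a^2$, and the mild cleverness lies in recognizing that the whole statement reduces to this single instance, which converts a question about radical ideals into the existence of one well-chosen semiordering.
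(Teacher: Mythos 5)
The paper does not prove this proposition; it cites \cite[1.4.6.1]{sc}, so in comparing I'm really judging your proof on its own terms. Your handling of the degenerate case $-1\in M$, your argument that $\rad{\supp M}$ is a real ideal (hence $\rad{\supp M}=\rrad{\supp M}$), and your proof that $\rrad{\supp M}\subseteq\bigcap\{\supp S\mid M\subseteq S\}$ are all correct and cleanly done. The error is in the final, "substantial" step.

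You reduce $a\notin\rad{\supp M}$ to the single fact $-a^2\notin M$ and then invoke a separation principle: for any proper quadratic module $M$ and any $b\notin M$, there is a semiordering $S\supseteq M$ with $b\notin S$. That principle is \emph{false}, and the loss of information in the reduction is exactly what makes it fail. Take $M$ to be the quadratic module generated by $-X_1^4$ in $\rx$. One checks (evaluating at $X_1=0$ and dividing out $X_1^2$ from the resulting sos identity) that $-X_1^2\notin M$, yet $-X_1^4\in M$, so for every semiordering $S\supseteq M$ we have $X_1^4\in\supp S$; since $\supp S$ is prime, $X_1\in\supp S$, hence $X_1^2\in\supp S$, hence $-X_1^2\in S$. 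So there is no $S\supseteq M$ avoiding $-X_1^2$, even though $-X_1^2\notin M$. (The proposition is still true here: $X_1\in\rad{\supp M}$ and $X_1\in\bigcap\supp S$.) What you actually have available from $a\notin\rad{\supp M}$ is the full family of conditions $-a^{2k}\notin M$ for \emph{every} $k\ge1$, and the genuine theorem of Prestel constructs a semiordering $S\supseteq M$ with $a\notin\supp S$ by a Zorn argument that uses all of these at once (e.g.\ maximizing among quadratic modules containing $M$ and disjoint from $\{-a^{2k}\mid k\ge1\}$), together with the nontrivial verification that the maximal element is a semiordering. The "one well-chosen semiordering" observation is right, but obtaining it is not an instance of a general "omit one prescribed non-element" enlargement; that step needs to be reworked to carry the radical hypothesis into the Zorn argument.
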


We explicitly extract the following consequence since this is exactly what is needed in the sequel.

\begin{lemma}
Let $M$ be a quadratic module in $\rx$. Then
\begin{equation}
(\rad{\supp M}-M)\cap M\subseteq\rad{\supp M}.\label{prestelcor1}
\end{equation}
\end{lemma}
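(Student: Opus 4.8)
The plan is to obtain \eqref{prestelcor1} as a quick consequence of Proposition \ref{prestel}, exploiting --- just as in Claim 3' of the proof of Theorem \ref{thm:main1} --- that for a \emph{semiordering} $S$ of $\rx$ the support $\supp S = S \cap -S$ is always a prime, and hence radical, ideal, whereas $\supp M$ need not be radical.

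First I would dispose of the degenerate case $-1\in M$: then $M=\rx$ by \eqref{id4}, so $\supp M=\rad{\supp M}=\rx$ and there is nothing to prove. So assume $M$ is proper. Then I would pick an arbitrary $f\in(\rad{\supp M}-M)\cap M$, write it as $f=g-h$ with $g\in\rad{\supp M}$ and $h\in M$, and record the two facts $f\in M$ and $f+h=g\in\rad{\supp M}$; the goal is to show $f\in\rad{\supp M}$.

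By Proposition \ref{prestel} it suffices to prove $f\in\supp S$ for every semiordering $S$ of $\rx$ with $M\subseteq S$. Fixing such an $S$, the key observation is that $\supp M=M\cap-M\subseteq S\cap-S=\supp S$ and $\supp S$ is radical (being prime), so $\rad{\supp M}\subseteq\supp S$; in particular $f+h\in\supp S$, hence $-(f+h)\in S$. Then $-f=h+\big(-(f+h)\big)\in S+S\subseteq S$, and since also $f\in M\subseteq S$ we conclude $f\in S\cap-S=\supp S$. As $S$ was an arbitrary semiordering containing $M$, Proposition \ref{prestel} yields $f\in\rad{\supp M}$.

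There is no real obstacle here; the only points to be careful about are the improper case and the invocation of the (standard, and already used above) fact that supports of semiorderings are prime. The conceptual content is that passing from the quadratic module $M$ to a semiordering $S\supseteq M$ turns the possibly non-radical ideal $\supp M$ into the radical ideal $\supp S$, which is exactly what makes the ``each summand of a sum lying in $\rad{\supp M}$ again lies in $\rad{\supp M}$'' conclusion go through.
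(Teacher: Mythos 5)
Your proof is correct and follows essentially the same route as the paper's: by Proposition \ref{prestel} it suffices to show $f\in\supp S$ for every semiordering $S\supseteq M$, and one concludes by noting $f\in S$ (from $f\in M\subseteq S$) together with $f\in-S$ (from $f=g-h$ with $g\in\rad{\supp M}\subseteq\supp S\subseteq-S$ and $h\in M\subseteq S$). The two small extra steps you include — dispatching the improper case up front, and re-deriving $\rad{\supp M}\subseteq\supp S$ from primeness of $\supp S$ rather than reading it directly off Proposition \ref{prestel} — are harmless and do not change the argument.
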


\begin{proof}
To prove \eqref{prestelcor1},
suppose $p\in M$ can be written $p=g-q$ with $g\in\rad{\supp M}$ and $q\in M$. By Proposition \ref{prestel}, we have
to show that $p\in\supp S$ for each semiordering $S$ of $\rx$ with $M\subseteq S$. But if such $S$ is given, then
$g\in\supp S$ and therefore $p=g-q\in -S$ as well as $p\in M\subseteq S$. Hence $p\in\supp S$.
\end{proof}

Having this lemma at hand, we can now give a conceptual interpretation of the certificates appearing in Theorem \ref{indsos}, disregarding the 
complexity of the certificate.

\begin{proposition}\label{indsos++}
If $L\in S\rx^{m\times m}$ is a linear pencil,
$f,\ell_1,\dots,\ell_n\in\rx_1$, $S_1,\dots,S_n\in S\rx^{m\times m}$ are quadratic sos-matrices, $S\in\smpsd$
and $c\in\R_{\ge0}$ such that \eqref{indsos1} and \eqref{indsos2} hold, then $f\in C_L+\rad{\supp M_L}$.
\end{proposition}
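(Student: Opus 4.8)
The strategy is to show, by a downward induction on the index $i$, that each ``partial tail'' of the certificate lies in the right place modulo $\rad{\supp M_L}$, ultimately reducing \eqref{indsos2} to a statement of the form $f\equiv c+\tr(LS)\pmod{\rad{\supp M_L}}$, from which $f\in C_L+\rad{\supp M_L}$ follows since $c+\tr(LS)\in C_L$. The key input besides Theorem \ref{indsos} itself is the Lemma giving \eqref{prestelcor1}, i.e. $(\rad{\supp M_L}-M_L)\cap M_L\subseteq\rad{\supp M_L}$, together with the elementary fact (from \eqref{id4}) that $\rad{\supp M_L}$ is an ideal of $\rx$, so that in particular $(\ell_1,\dots,\ell_i)\subseteq\rad{\supp M_L}$ once we know $\ell_1,\dots,\ell_i\in\rad{\supp M_L}$.

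First I would show $\ell_i\in\rad{\supp M_L}$ for each $i\in\{1,\dots,n\}$ by induction on $i$. Suppose $\ell_1,\dots,\ell_{i-1}\in\rad{\supp M_L}$; since $\rad{\supp M_L}$ is an ideal, the ideal $(\ell_1,\dots,\ell_{i-1})$ is contained in it, so \eqref{indsos1} reads $\ell_i^2+\tr(LS_i)\in\rad{\supp M_L}$. Now $\tr(LS_i)\in M_L$ (it is $\tr(LS_i)$ with $S_i$ an sos-matrix, hence in $M_L$ by the second description of $M_L$ in Definition \ref{def:scm}), and $\ell_i^2+\tr(LS_i)\in M_L$ as well; writing $\ell_i^2 = \big(\ell_i^2+\tr(LS_i)\big) - \tr(LS_i)$ exhibits $\ell_i^2$ as an element of $(\rad{\supp M_L}-M_L)$, and also $\ell_i^2\in M_L$, so by \eqref{prestelcor1} we get $\ell_i^2\in\rad{\supp M_L}$, hence $\ell_i\in\rad{\supp M_L}$ because that ideal is real radical. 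This completes the induction and shows $(\ell_1,\dots,\ell_n)\subseteq\rad{\supp M_L}$.

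Given that, \eqref{indsos2} says $f-c-\tr(LS)\in(\ell_1,\dots,\ell_n)\subseteq\rad{\supp M_L}$, so $f = c + \tr(LS) + g$ with $g\in\rad{\supp M_L}$. Since $c\in\R_{\ge0}$ and $S\in\smpsd$, the element $c+\tr(LS)$ lies in $C_L$ (using the second description of $C_L$ in Definition \ref{def:scm}); hence $f\in C_L+\rad{\supp M_L}$, as desired.

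The only mildly delicate point is the repeated use of ``$\rad{\supp M_L}$ is an ideter'' — one should note $\supp M_L := M_L\cap -M_L$ is an ideal by \eqref{id4} (as already used in Claim 2 of Theorem \ref{thm:main1}'s proof), and its radical is then automatically an ideal; combined with Proposition \ref{prestel} (which identifies $\rad{\supp M_L}=\rrad{\supp M_L}$) this gives that it is real radical. I expect the main obstacle to be purely bookkeeping: making sure the inductive hypothesis ``$\ell_1,\dots,\ell_{i-1}\in\rad{\supp M_L}$'' is phrased so that $(\ell_1,\dots,\ell_{i-1})\subseteq\rad{\supp M_L}$ is immediate, and correctly invoking \eqref{prestelcor1} in the form needed, so there is essentially no hard analytic or geometric content beyond what is already in the cited lemma and Proposition \ref{prestel}.
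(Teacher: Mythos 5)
Your proposal is correct and is essentially the paper's own argument: induction on $i$ using \eqref{prestelcor1} to conclude $\ell_i^2\in\rad{\supp M_L}$ (hence $\ell_i\in\rad{\supp M_L}$ since that ideal is radical), and then reading \eqref{indsos2} modulo $\rad{\supp M_L}$. The only differences are cosmetic: you spell out the step $\ell_i^2\in\rad{\supp M_L}\Rightarrow\ell_i\in\rad{\supp M_L}$ and the ideal property that the paper leaves implicit.
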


\begin{proof}
Set $\cI:=\rad{\supp M_L}$.
It is clear that \eqref{indsos2} gives $f\in C_L+\cI$ if we prove that
$\ell_i\in\cI$ for all $i\in\{1,\dots,n\}$. Fix $i\in\{1,\dots,n\}$ and assume by induction that $\ell_1,\dots,\ell_{i-1}\in\cI$. Then
\eqref{indsos1} implies $\ell_i^2+\tr(LS_i)\in\cI$ and therefore $\ell_i^2\in(\cI-M_L)\cap\sos\subseteq(\cI-M_L)\cap M_L
\subseteq\cI$ by \eqref{prestelcor1}.
\end{proof}

We get the same interpretation for the certificates from Theorem \ref{sosram}.

\begin{proposition}\label{sosram++}
If $L\in S\rx^{m\times m}$ is a linear pencil,
$f\in\rx_1$, $S_1,\dots,S_n\in S\rx^{m\times m}$ are quadratic sos-matrices,
$U_1,\dots,U_n\in S\R^{s(1)\times s(1)}$, $W_1,\dots,W_n\in\R^{s(2)\times s(1)}$, $S\in S\R^{m\times m}_{\succeq0}$ and $c\in\R_{\ge0}$ such that
\eqref{sosram1}, \eqref{sosram2} and \eqref{sosram2} hold, then $f\in C_L+\rad{\supp M_L}$.
\end{proposition}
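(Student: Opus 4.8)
The plan is to run the same ideal-membership induction as in the ``easy'' half of the proof of Theorem~\ref{sosram}, but with the ideal $\cI:=\rad{\supp M_L}$ in place of the ideal of polynomials vanishing on $S_L$. First I would record that $\cI$ is a \emph{real radical} ideal of $\rx$: this is part of Proposition~\ref{prestel}, which gives $\rad{\supp M_L}=\rrad{\supp M_L}$. I would also note the two trivial facts $\tr(LS_i)\in M_L$ for each $i$ (since $S_i$ is an sos-matrix) and $c+\tr(LS)\in C_L$ (by the second description of $C_L$ in Definition~\ref{def:scm}).

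Next I would prove, by induction on $i\in\{0,1,\dots,n\}$, that $\vecx2^*W_i\vecx1\in\cI$ (where $W_0:=0$). The case $i=0$ is immediate. For the step, assume $\vecx2^*W_{i-1}\vecx1\in\cI$. From \eqref{sosram1} we get $\vecx1^*U_i\vecx1=-\vecx2^*W_{i-1}\vecx1-\tr(LS_i)$, i.e.\ $\vecx1^*U_i\vecx1=g-q$ with $g:=-\vecx2^*W_{i-1}\vecx1\in\cI$ (an ideal is closed under negation) and $q:=\tr(LS_i)\in M_L$; thus $\vecx1^*U_i\vecx1\in\cI-M_L$. On the other hand \eqref{sosram2} gives $U_i\succeq W_i^*W_i\succeq0$, so $\vecx1^*U_i\vecx1$ is a sum of squares and hence lies in $M_L$. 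Combining, $\vecx1^*U_i\vecx1\in(\cI-M_L)\cap M_L\subseteq\cI$ by \eqref{prestelcor1} applied to $M=M_L$. Now Proposition~\ref{matrad} applies with $d=1$, $e=2$, the real radical ideal $\cI$, $U:=U_i$ (as $\vecx1^*U_i\vecx1\in\cI$) and $W:=W_i$ (as $U_i\succeq W_i^*W_i$), yielding $\vecx2^*W_i\vecx1\in\cI$. This completes the induction; in particular $\vecx2^*W_n\vecx1\in\cI$.

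Finally, rearranging \eqref{sosram3} gives $f=(c+\tr(LS))-\vecx2^*W_n\vecx1$, where the first summand lies in $C_L$ and the second in $\cI=\rad{\supp M_L}$; hence $f\in C_L+\rad{\supp M_L}$, as claimed.

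The only step with genuine content is $\vecx1^*U_i\vecx1\in\cI$: this is precisely where the interaction of $\rad{\supp M_L}$ with $M_L$ encoded in \eqref{prestelcor1} (ultimately Prestel's semiordering theory, Proposition~\ref{prestel}) is used, and it is also what makes Proposition~\ref{matrad} applicable, since that proposition requires $\cI$ to be real radical. I expect this to be the crux; once it is in place, the argument is structurally identical to the first direction of Theorem~\ref{sosram}, so I anticipate no further obstacle.
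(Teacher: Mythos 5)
Your proof is correct and follows the paper's argument essentially verbatim: the same induction on $i$ showing $\vecx2^*W_i\vecx1\in\cI:=\rad{\supp M_L}$, the same use of \eqref{prestelcor1} at the key step $\vecx1^*U_i\vecx1\in(\cI-M_L)\cap M_L\subseteq\cI$, the same appeal to Proposition~\ref{matrad} (licensed by $\cI$ being real radical via Proposition~\ref{prestel}), and the same final rearrangement of \eqref{sosram3}. You spell out a few routine details (the explicit $g-q$ decomposition, $W_i^*W_i\succeq0$) that the paper leaves implicit, but there is no substantive difference.
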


\begin{proof}
Set $\cI:=\rad{\supp M_L}$.
It is clear that constraint \eqref{sosram3} gives $f\in C_L+\cI$ if we show that
$\vecx2W_n\vecx1\in\cI$. In fact, we show by induction that $\vecx2W_i\vecx1\in\cI$ for all $i\in\{0,\dots,n\}$.

The case $i=0$ is trivial since $W_0=0$ by definition. Let $i\in\{1,\dots,n\}$ be given and suppose
that we know already $\vecx2^*W_{i-1}\vecx1\in\cI$.
Then \eqref{sosram1} shows $\vecx1^*U_i\vecx1\in\cI-M_L$. On the other hand \eqref{sosram2} implies in particular
$U_i\succeq0$ and therefore $\vecx1^*U_i\vecx1\in\sos\subseteq M_L$. But then
$\vecx1^*U_i\vecx1\in(\cI-M_L)\cap M_L\subseteq\cI$ by \eqref{prestelcor1}. Now \eqref{sosram2} yields
$\vecx2^*W_i\vecx1\in\cI$ by Proposition \ref{matrad} since $\cI$ is real radical by Proposition \ref{prestel}.
This ends the induction.
\end{proof}

The following corollary is now a generalization of Proposition \ref{usual} working also for low-dimensional $S_L$ (note that $\supp M_L=(0)$ if
$S_L$ has non-empty interior).

\begin{corollary}\label{cor:radlinsat}
Let $L\in S\rx^{m\times m}$ be a linear pencil. Then
$$f\ge0\text{\ on\ }S_L\ \iff\ f\in C_L+\sqrt{\supp M_L}$$
for all $f\in\rx_1$.
\end{corollary}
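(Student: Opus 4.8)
The plan is to establish both directions of the equivalence, using the machinery already built up. The implication ``$\Leftarrow$'' is the easy one: if $f\in C_L+\sqrt{\supp M_L}$, write $f=g+h$ with $g\in C_L$ and $h\in\sqrt{\supp M_L}$. Since $g\in C_L\subseteq M_L$, $g$ is nonnegative on $S_L$; and since $\supp M_L$ consists of polynomials vanishing on $S_L$ (each $p\in\supp M_L$ satisfies $p,-p\in M_L$, hence $p\ge0$ and $-p\ge 0$ on $S_L$), its radical $\sqrt{\supp M_L}$ also consists of polynomials vanishing on $S_L$. Thus $h|_{S_L}=0$ and $f|_{S_L}=g|_{S_L}\ge 0$.

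For the nontrivial direction ``$\Rightarrow$'', suppose $f\in\rx_1$ with $f\ge0$ on $S_L$. First I would apply Theorem~\ref{indsos}, which yields $\ell_1,\dots,\ell_n\in\rx_1$, quadratic sos-matrices $S_1,\dots,S_n$, a matrix $S\in\smpsd$ and $c\ge0$ satisfying \eqref{indsos1} and \eqref{indsos2}. Then I would invoke Proposition~\ref{indsos++}, whose hypotheses are exactly \eqref{indsos1} and \eqref{indsos2}, to conclude directly that $f\in C_L+\sqrt{\supp M_L}$. (Alternatively one could cite Theorem~\ref{sosram} together with Proposition~\ref{sosram++} — either path works, since Proposition~\ref{indsos++} already packages the semiordering argument via \eqref{prestelcor1}.)

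So the proof is really just assembling existing pieces: Theorem~\ref{indsos} produces the certificate, and Proposition~\ref{indsos++} reinterprets it as membership in $C_L+\sqrt{\supp M_L}$. There is essentially no obstacle left at this stage; the genuine work — the inductive construction of the $\ell_i$ and $S_i$, and the use of Prestel's theory of semiorderings encoded in \eqref{prestelcor1} — has already been carried out in the lemmas and propositions preceding this corollary. One small point to remark on is that when $S_L$ has nonempty interior, $\supp M_L=(0)$, so $\sqrt{\supp M_L}=(0)$ and the statement collapses to Proposition~\ref{usual}; the corollary is thus genuinely a generalization covering the low-dimensional (and empty) cases uniformly.
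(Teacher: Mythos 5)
Your proposal is correct and matches the paper's proof exactly: the paper's one-line argument is precisely to combine Theorem~\ref{indsos} with Proposition~\ref{indsos++} (or, alternatively, Theorem~\ref{sosram} with Proposition~\ref{sosram++}). Your explicit verification of the easy direction, using that $\supp M_L=M_L\cap(-M_L)$ and hence $\rad{\supp M_L}$ vanishes on $S_L$, is a worthwhile addition that the paper leaves implicit.
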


\begin{proof}
Combine either Theorem \ref{indsos} with Proposition \ref{indsos++}, or Theorem \ref{sosram} with Proposition \ref{sosram++}.
\end{proof}

Now we come back to the quadratic module where we cannot avoid to add $\ep>0$ in order to get a certificate as was shown in
Example \ref{new2}.

\begin{corollary}\label{cor:almostsat}
Let $L\in S\rx^{m\times m}$ be a linear pencil. Then
$$f\ge0\text{\ on\ }S_L\ \iff\ \forall\ep>0:f+\ep\in M_L$$
for all $f\in\rx_1$.
\end{corollary}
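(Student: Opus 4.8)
The plan is to deduce this from Corollary \ref{cor:radlinsat}, which already characterizes $f\ge0$ on $S_L$ as membership in $C_L+\sqrt{\supp M_L}$. The direction $(\Leftarrow)$ is trivial: if $f+\ep\in M_L$ for every $\ep>0$, then each such $f+\ep$ is nonnegative on $S_L$, and letting $\ep\to0$ gives $f\ge0$ on $S_L$. For $(\Rightarrow)$, suppose $f\ge0$ on $S_L$ and fix $\ep>0$. By Corollary \ref{cor:radlinsat}, write $f=g+h$ with $g\in C_L$ and $h\in\sqrt{\supp M_L}$; so $h$ vanishes on $S_L$ and, being in the radical of $\supp M_L$, satisfies $h^{2k}\in\supp M_L$ for some $k\in\N$, i.e.\ both $h^{2k}\in M_L$ and $-h^{2k}\in M_L$.

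First I would reduce to showing that $\ep+h\in M_L$ for each $\ep>0$: since $g\in C_L\subseteq M_L$ and $M_L+M_L\subseteq M_L$, this will give $f+\ep=g+(\ep+h)\in M_L$. To show $\ep+h\in M_L$, the idea is a standard trick for absorbing a "small" element whose even power lies in the support of a quadratic module. One useful formulation: for $N\in\N$ large enough, $\ep+h$ is a sum of a square, a multiple of $\ep$, and a multiple of $-h^{2k}$, plus something in $M_L$. Concretely, one can use that $h$ is (up to the ideal $(h^{2k})$, whose negative is also in $M_L$) nilpotent-like; more precisely, I would argue inductively that $\ep^{1/2^j}$-scaled powers of $h$ can be peeled off. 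A clean way: since $-h^{2k}\in M_L$, for any polynomial $p$ we have $p^2h^{2k}\in M_L$ and $-p^2h^{2k}\in M_L$, so the ideal $(h^{2k})$ lies in $M_L\cap-M_L$, which is an ideal of $\rx$ by \eqref{id4}. Hence it suffices to prove $\ep+h\in M_L$ modulo this ideal, i.e.\ in the quotient ring where $h^{2k}=0$, i.e.\ where $h$ is nilpotent. In a commutative ring, if $h$ is nilpotent then $\ep+h$ is a unit with "positive part" $\ep>0$; explicitly, $\ep+h=\ep\bigl(1+\ep^{-1}h\bigr)$ and $1+t$ where $t=\ep^{-1}h$ is nilpotent can be written as a square times a unit, or more simply: the telescoping identity $\sum_{i=0}^{2k-1}(-1)^i \ep^{-i-1}h^i\cdot(\ep+h)=1-(-1)^{2k}\ep^{-2k}h^{2k}=1$ modulo $(h^{2k})$, so $\ep+h$ times an explicit polynomial equals $1$ modulo the ideal. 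That is not quite enough because we need $\ep+h$ itself in $M_L$, not just invertibility. So instead I would use the following: write $\ep+h = (\text{square}) + (\text{element of }M_L)$ by the identity $4(\ep+h)=(1+\ep+h)^2-(1-\ep-h)^2$? — no, that needs $\ep+h\in M_L$ on the right. The correct move is the one already used in the proof of Theorem \ref{thm:main1}, Claim 3: if $p\in M_L$ and $-1=p+q\ell$, one manufactures membership. Here, more directly: I claim $\ep + h\in M_L$ because, modulo the ideal $J:=(h^{2k})\subseteq M_L\cap-M_L$, we have $\ep+h\equiv \ep\bigl(1+\ep^{-1}h\bigr)$, and $1+u$ for $u$ nilpotent is a sum of squares in $\rx/J$: indeed $1+u=\left(1+\tfrac u2\right)^2 - \tfrac{u^2}{4}$, and iterating, the error term $u^2/4$ has strictly higher nilpotency depth, so after finitely many steps it vanishes; thus $1+u\in\sos+J\subseteq M_L$, whence $\ep+h=\ep(1+\ep^{-1}h)\in M_L$.

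Assembling: $f+\ep=g+(\ep+h)$ with $g\in C_L\subseteq M_L$ and $\ep+h\in M_L$, so $f+\ep\in M_L$; since $\ep>0$ was arbitrary, this proves $(\Rightarrow)$.

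The main obstacle I anticipate is the "absorption" lemma — showing cleanly that if $-h^{2k}\in M_L$ (so $h$ is nilpotent modulo an ideal contained in $M_L\cap-M_L$) then $\ep+h\in M_L$ for all $\ep>0$. The subtle point is making sure the identity $1+u=(1+\tfrac u2)^2-\tfrac{u^2}{4}$ combined with induction on nilpotency order really lands in $\sos+J$ with $J\subseteq M_L$, and that the scaling by $\ep$ (positive) does not disturb this; this is exactly the kind of computation the problem asks me not to grind through, but it is elementary once one notes $J=(h^{2k})$ is an ideal inside $M_L\cap-M_L$ by \eqref{id4}. Everything else is bookkeeping on top of Corollary \ref{cor:radlinsat}.
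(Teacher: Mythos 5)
Your proposal follows the paper's proof in all essentials: decompose $f=g+h$ via Corollary \ref{cor:radlinsat}, reduce to showing $\ep+h\in M_L$, and produce a square root of $\ep+h$ modulo the support ideal using that $h$ is nilpotent there. The one place you wave your hands — iterating $1+u=(1+\tfrac u2)^2-\tfrac{u^2}4$ to push the error into $J$ — is exactly what the paper makes precise with the finite binomial series $\sum_{i=0}^{k-1}\binom{1/2}{i}p^i$ (note the literal identity you wrote has error term $-u^2/4$, not of the form $1+v$, so the ``iteration'' really means refining the approximate square root degree by degree, which is what the binomial coefficients encode); also the paper works directly modulo $\cI=\supp M_L=M_L\cap-M_L$ rather than the sub-ideal $(h^{2k})$, making the step $J\subseteq M_L\cap-M_L$ unnecessary.
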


\begin{proof}
To prove the non-trivial implication, let $f\in\rx_1$ with $f\ge0$ on $S_L$ be given. It suffices to show $f+\ep\in M_L$ for the special case $\ep=1$
(otherwise replace $f$ by $\ep f$ and divide by $\ep$). By Corollary \ref{cor:radlinsat}, there exists $g\in C_L$, $p\in\rx$ and $k\in\N$
such that $f=g+p$ and $p^k\in\cI:=\supp M_L$. Now $f+\ep=f+1=g+(f-g)+1=g+(p+1)$ and it is enough to show that $p+1\in M_L$. This will follow
from the fact that the image of $p+1$ is a square in the quotient ring $\rx/\cI$. Indeed since the image of $p$ in $\rx/\cI$ is nilpotent (in fact the image of
$p^k$ is zero), we can simply write down a square root of this element using the \emph{finite} Taylor expansion at $1$ of the square root function
in $1$ given by the binomial series:
\[p+1\equiv\left(\sum_{i=0}^{k-1}\binom{\frac12}ip^i\right)^2\qquad\text{mod }\cI.\qedhere\]
\end{proof}

\section{Matricial spectrahedra and complete positivity}\label{sec:hkm}

In this section we revisit some of the main results from 
\cite{hkm}, where we considered
{\em noncommutative $($matricial$)$ relaxations} of linear matrix inequalities
under the assumption of strict feasibility.
The purpose of this section is twofold.
First, in Subsection \ref{sec:tau} 
we explain which of the results from \cite{hkm}
generalize to {\em weakly feasible} linear matrix inequalities.
Second, in Subsection \ref{subsec:bla} 
we explain how our results from Section 3 pertain
to (completely) positive maps under the absence of positive definite
elements.

\subsection{Matricial relaxations of LMIs}

Suppose
\bes
L=A_0+\sum_{i=1}^n X_iA_i
\ees
is a linear pencil of size $m$. 
  Given
  $Z=\mbox{col}(Z_1,\dots,Z_n):=\begin{bmatrix}Z_1\\\vdots\\Z_n\end{bmatrix}\in (\SRdd)^{n},$
the \emph{evaluation} $L(Z)$ is defined as
$$
  L(Z)=I_d\otimes A_0 + \sum_{i=1}^n Z_i \otimes A_i\in S\R^{dm\times dm}.
$$
The \emph{matricial spectrahedron} of a linear pencil $L$ 
is
$$
\cD_{L}:= \bigcup_{d\in\N}\{Z\in (\SRdd)^n \mid L(Z) \succeq 0\}.
$$
Let
$$
\cD_L(d)=\{ Z\in (\SRdd)^n\mid L(Z) \succeq 0\}=\cD_L\cap
(\SRdd)^n.
$$
for $d\in\N$. The set $\cD_L(1)\subseteq\R^n$ is the feasibility set of the linear
matrix inequality 
$L(x)\succeq0$ and coincides with $S_L$ as introduced above.
We call $\cD_L$ \emph{bounded} if there is an $N\in\N$ with operator norm $\|Z\|\leq N$ for all $Z\in \cD_L$.

\def\col{{\rm col}}
\begin{lem}
For matrices $Z_1,\ldots,Z_n$ of the same size, we have
\[
\| \col(Z_1,\ldots,Z_n) \|^2 = \Big\| \sum_{i=1}^n Z_i^*Z_i \Big\|.
\]
\end{lem}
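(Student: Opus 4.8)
The plan is to reduce the statement to the single elementary fact that $\|A\|^2=\|A^*A\|$ for the operator norm of an arbitrary (possibly rectangular) matrix $A$. Write $Z:=\col(Z_1,\dots,Z_n)$, the matrix obtained by stacking $Z_1,\dots,Z_n$ on top of each other; if each $Z_i$ has size $p\times q$, then $Z$ has size $np\times q$. First I would record the block identity
\[
Z^*Z=\begin{bmatrix}Z_1^*&\cdots&Z_n^*\end{bmatrix}\begin{bmatrix}Z_1\\\vdots\\Z_n\end{bmatrix}=\sum_{i=1}^nZ_i^*Z_i,
\]
which is just the definition of block matrix multiplication.

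Next I would invoke $\|A\|^2=\|A^*A\|$ with $A=Z$. This identity holds for every matrix $A$: submultiplicativity gives $\|A^*A\|\le\|A^*\|\,\|A\|=\|A\|^2$, while for every vector $v$ one has $\|Av\|^2=\langle A^*Av,v\rangle\le\|A^*Av\|\,\|v\|\le\|A^*A\|\,\|v\|^2$, and taking the supremum over unit vectors $v$ yields $\|A\|^2\le\|A^*A\|$. Combining with the block computation gives
\[
\|\col(Z_1,\dots,Z_n)\|^2=\|Z\|^2=\|Z^*Z\|=\Big\|\sum_{i=1}^nZ_i^*Z_i\Big\|,
\]
which is the assertion.

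I do not expect any genuine obstacle; the only minor point to get right is that $\|A\|^2=\|A^*A\|$ is being used for a non-square $A$, so I would phrase it in this rectangular form rather than quoting the $C^*$-identity for a square matrix algebra. Alternatively, the whole proof can be done in one line by a direct quadratic-form computation: $\|Z\|^2=\sup_{\|v\|=1}\|Zv\|^2=\sup_{\|v\|=1}\sum_{i=1}^n\|Z_iv\|^2=\sup_{\|v\|=1}v^*\big(\sum_{i=1}^nZ_i^*Z_i\big)v$, and since $\sum_{i=1}^nZ_i^*Z_i$ is positive semidefinite its operator norm equals the supremum of its associated quadratic form on the unit sphere, which is exactly the last expression.
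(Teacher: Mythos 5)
Your proof is correct and rests on the same key identity $\|A\|^2=\|A^*A\|$ that underlies the paper's argument; the paper handles the rectangularity of $\col(Z_1,\dots,Z_n)$ by padding it with zero columns to a square matrix $\hat Z$ and then computing $\|\hat Z^*\hat Z\|$, whereas you prove the identity directly for rectangular matrices, which is slightly cleaner and avoids the padding step.
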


\begin{proof}
Suppose $Z_j\in\R^{d\times d}$.
Expand $\col(Z_1,\ldots,Z_n)\in \R^{dn\times d}$
to a $dn\times dn$ matrix $\hat Z$ by adding zero columns.
Then
\bes
\begin{split}
\| \col(Z_1,\ldots,Z_n) \|^2 &= \|\hat Z\|^2 
= \| \hat Z^* \hat Z \| \\
& = 
 \Big\| 
\begin{bmatrix} 
\sum_i Z_i^*Z_i & 0  & \cdots & 0\\
0 & 0 & \cdots & 0 \\
\vdots & \vdots & \ddots & \vdots \\
0 & 0 & \cdots & 0
\end{bmatrix}
\Big\| 
= 
\big\| \sum_i Z_i^*Z_i \big\|.
\qedhere
\end{split}
\ees
\end{proof}

Given linear pencils $L_1$ and $L_2$,
\beq
\label{eq:penc12}
 L_j= A_{j,0}+\sum_{i=1}^n X_i A_{j,i}  \in S\R\ac_1^{m_j\times m_j} , \quad j=1,2,
\eeq
we shall be interested in  the following inclusion for
matricial spectrahedra:
\beq\label{eq:bind1}
\cD_{L_1}\subseteq\cD_{L_2}.
\eeq
In this case we say that $L_1$ \emph{matricially dominates} $L_2$.
If $L_2\in\R\cx_1$ then \eqref{eq:bind1} is equivalent to 
$S_{L_1}\subseteq S_{L_2}$:

\begin{prop}\label{prop:1pos}
Let $L$ be a linear pencil of size $m$ and let $\ell\in\R\cx_1$. Then 
$$
\ell|_{S_L}\geq 0 \quad\iff\quad
\ell|_{\cD_L}\succeq0.
$$
\end{prop}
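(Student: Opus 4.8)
The plan is to prove the two implications of the equivalence separately; all the substance is in ($\Rightarrow$).

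For ($\Leftarrow$) I would simply observe that $S_L=\cD_L(1)\subseteq\cD_L$ and that for a $1\times1$ matrix positive semidefiniteness coincides with nonnegativity. Hence $\ell|_{\cD_L}\succeq0$ specializes directly to $\ell(x)\ge0$ for every $x\in S_L$.

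For ($\Rightarrow$), assume $\ell|_{S_L}\ge0$, fix $d\in\N$, and fix a tuple $Z=\col(Z_1,\dots,Z_n)\in\cD_L(d)$, so that $L(Z)=I_d\otimes A_0+\sum_{i=1}^nZ_i\otimes A_i\succeq0$. The key step --- a ``compression to scalars'' --- is the claim that for every unit vector $v\in\R^d$ the point $x_v:=(v^*Z_1v,\dots,v^*Z_nv)\in\R^n$ lies in $S_L$. To prove this I would pick an arbitrary $u\in\R^m$, form $v\otimes u\in\R^{dm}$, and expand, using $v^*v=1$ and the identity $(v\otimes u)^*(Z_i\otimes A_i)(v\otimes u)=(v^*Z_iv)(u^*A_iu)$:
\[
(v\otimes u)^*L(Z)(v\otimes u)=u^*A_0u+\sum_{i=1}^n(v^*Z_iv)(u^*A_iu)=u^*L(x_v)u .
\]
Since $L(Z)\succeq0$ the left-hand side is $\ge0$, so $u^*L(x_v)u\ge0$ for all $u\in\R^m$, i.e.\ $L(x_v)\succeq0$ and $x_v\in S_L$.

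Feeding $x_v\in S_L$ into the hypothesis gives $\ell(x_v)\ge0$. Writing $\ell=c_0+\sum_ic_iX_i$ and using $\|v\|=1$ once more, $\ell(x_v)=c_0+\sum_ic_i(v^*Z_iv)=v^*\big(c_0I_d+\sum_ic_iZ_i\big)v=v^*\ell(Z)v$, so $v^*\ell(Z)v\ge0$. As $v$ ranges over all unit vectors of $\R^d$, hence over all of $\R^d$ by homogeneity, this yields $\ell(Z)\succeq0$; and since $d$ and $Z\in\cD_L(d)$ were arbitrary, $\ell|_{\cD_L}\succeq0$. I do not expect a real obstacle here: the proof is short and elementary, and the only points needing care are the Kronecker-product bookkeeping and the convention identifying the evaluation of the $1\times1$ pencil $\ell$ at a symmetric tuple with the matrix $c_0I_d+\sum_ic_iZ_i$. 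What is worth emphasizing is that the compression $x_v$ works with no commutativity assumption on the $Z_i$, which is exactly why the scalar inequality $\ell|_{S_L}\ge0$ already controls all matrix levels.
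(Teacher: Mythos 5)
Your proof is correct and uses essentially the same compression argument as the paper: evaluate $L(Z)$ on vectors of the form $v\otimes u$ to show $x_v=(v^*Z_1v,\dots,v^*Z_nv)\in S_L$, then read off $v^*\ell(Z)v=\ell(x_v)\ge0$. The paper phrases the forward direction contrapositively (choosing $v$ to be an eigenvector of $\ell(Z)$ for a negative eigenvalue), but this is the identical idea.
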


\begin{proof}
The implication $\Leftarrow$ is obvious as $\cD_L\supseteq S_L$.
For the converse assume $\ell|_{\cD_L}\not\succeq0$ 
and choose $Z=(Z_1,\ldots,Z_n)\in\cD_L$ with $\ell(Z)\not\succeq0$. Let
$v$ be an eigenvector of $\ell(Z)$ with negative eigenvalue.
For 
$v^*Zv=(v^*Z_1v,\ldots,v^*Z_nv)\in S_L$ 
we have
$$L(v^*Zv)=(I_m\otimes v)^*L(Z)(I_m\otimes v)\succeq0,$$
and
$$
0 > v^* \ell(Z) v = \ell(v^*Zv),
$$
whence $\ell|_{S_L}\not\geq0$.
\end{proof}

\begin{cor}
Let $L$ be a linear pencil. Then
$$\cD_L \text{ is bounded}\quad\iff\quad
S_L=\cD_L(1) \text{ is bounded.}
$$
\end{cor}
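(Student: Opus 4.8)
The plan is to prove the non-trivial direction $\Leftarrow$; the direction $\Rightarrow$ is immediate since $S_L=\cD_L(1)\subseteq\cD_L$, so any operator-norm bound valid on $\cD_L$ bounds $S_L$. The real obstacle in the converse is that the archimedean certificate furnished by Corollary~\ref{cor:compArch} uses sos-matrix weights with \emph{polynomial} entries, and a polynomial identity in the commuting variables $\x$ has no canonical evaluation at a tuple $Z=(Z_1,\dots,Z_n)$ of \emph{non-commuting} symmetric matrices. I would therefore not use Corollary~\ref{cor:compArch} directly, but rather the \emph{constant-weight} certificate contained in the proof of Lemma~\ref{lem:compArch}, which does transfer to matricial evaluations.

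First I would record that $\cD_L$ is stable under unital compressions to dimension one: if $Z\in\cD_L(d)$ and $e\in\R^d$ with $\|e\|=1$, then $(e\otimes I_m)^*L(Z)(e\otimes I_m)=L(e^*Z_1e,\dots,e^*Z_ne)$, which is positive semidefinite because $L(Z)$ is, so $(e^*Z_1e,\dots,e^*Z_ne)\in S_L$. In particular $\cD_L\neq\emptyset$ forces $S_L\neq\emptyset$. Hence we may assume $\cD_L\neq\emptyset$ (otherwise it is trivially bounded), so $S_L\neq\emptyset$; being also bounded by hypothesis, $S_L$ is covered by the construction in the proof of Lemma~\ref{lem:compArch}, which yields $N\in\N$ with $N-X_i\in C_L$ and $N+X_i\in C_L$ for all $i\in\{1,\dots,n\}$. (Compressions alone do not bound $\|Z\|$: they control $\sum_i(e^*Z_ie)^2$, not $\sum_i\|Z_ie\|^2$, which is why the archimedean input is needed.)

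It then remains to transfer these $C_L$-memberships into $\cD_L$. For a linear polynomial $\ell=\beta_0+\sum_l\beta_lX_l$ set $\ell(Z):=\beta_0I_d+\sum_l\beta_lZ_l$; a short tensor-product computation gives $(w^*Lw)(Z)=(I_d\otimes w)^*L(Z)(I_d\otimes w)\succeq0$ for every constant $w\in\R^m$ whenever $Z\in\cD_L(d)$. Writing each membership above as $N-\de X_i=c+\sum_j w_j^*Lw_j$ with $c\in\R\nn$, $w_j\in\R^m$ and $\de\in\{-1,1\}$, and applying $\ell\mapsto\ell(Z)$, one obtains $NI_d-\de Z_i\succeq0$, i.e.\ $-NI_d\preceq Z_i\preceq NI_d$, hence $\|Z_i\|\le N$ for every $i$. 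By the lemma preceding Proposition~\ref{prop:1pos} we then get $\|Z\|^2=\big\|\sum_iZ_i^2\big\|\le\sum_i\|Z_i\|^2\le nN^2$, a bound independent of $d$ and of $Z\in\cD_L$, so $\cD_L$ is bounded. The one genuine difficulty is the non-commuting evaluation flagged at the outset; it dissolves once one notices that Lemma~\ref{lem:compArch}'s proof already produces constant-weight certificates, and everything else is routine tensor-product bookkeeping together with the identity $\|\col(Z_1,\dots,Z_n)\|^2=\|\sum_iZ_i^*Z_i\|$.
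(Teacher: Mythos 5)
Your proof is correct, but it takes a more circuitous route than the paper intends, and the parenthetical remark about compressions misidentifies the obstacle. The corollary is placed immediately after Proposition~\ref{prop:1pos}, whose compression argument is exactly the intended tool: if $S_L$ is bounded, say $S_L\subseteq\{x\in\R^n\mid\|x\|_\infty\leq N\}$, then $N-\de X_i\geq0$ on $S_L$ for all $i$ and $\de\in\{\pm1\}$, and Proposition~\ref{prop:1pos} (whose proof is precisely a unital compression to dimension one together with the identity $\ell(v^*Zv)=v^*\ell(Z)v$) immediately gives $NI_d-\de Z_i\succeq0$ for every $Z\in\cD_L(d)$. Since each $Z_i$ is real symmetric, $\|Z_i\|=\max_{\|e\|=1}|e^*Z_ie|\leq N$, so—contrary to your parenthetical—compressions \emph{do} bound $\|Z\|$ once one uses the symmetry of the $Z_i$ to upgrade the diagonal bound $|e^*Z_ie|\leq N$ to an operator-norm bound; neither a $C_L$-certificate nor Lemma~\ref{lem:compArch} is needed. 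Your route—pulling the constant-weight certificate $N-\de X_i\in C_L$ out of the proof of Lemma~\ref{lem:compArch} and evaluating it at $Z$—does correctly produce the same inequality $NI_d-\de Z_i\succeq0$ (your tensor computation is right, and the case distinction on $\cD_L=\emptyset$ is handled properly), but it in effect reproves the one instance of Proposition~\ref{prop:1pos} that is needed, at the cost of invoking the substantially harder Lemma~\ref{lem:compArch}. What your approach buys is an explicit algebraic (constant-weight) certificate behind the norm bound; what it costs is having to route the nonempty case through Lemma~\ref{lem:compArch} when a one-line appeal to Proposition~\ref{prop:1pos} suffices.
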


\begin{cor}
For a linear pencil $L$, 
$\cD_L=\varnothing$ if and only if $S_L=\varnothing$.
\end{cor}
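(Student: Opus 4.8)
The plan is to read this off from Proposition \ref{prop:1pos} (no appeal to Theorem \ref{thm:main1} is needed). One direction requires nothing: since $\cD_L(1) = S_L$ and $\cD_L(1) \subseteq \cD_L$ by definition, emptiness of $\cD_L$ immediately forces $S_L = \varnothing$.

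For the converse I would simply feed the constant polynomial $\ell := -1 \in \R\cx_1$ into Proposition \ref{prop:1pos}. If $S_L = \varnothing$, then $\ell|_{S_L} \geq 0$ holds vacuously, so the proposition yields $\ell|_{\cD_L} \succeq 0$; concretely this says $-I_d \succeq 0$ for every $d \in \N$ and every $Z \in \cD_L(d)$. Since $-I_d$ is never positive semidefinite for $d \geq 1$, no such $Z$ can exist, i.e.\ $\cD_L(d) = \varnothing$ for all $d$, and hence $\cD_L = \bigcup_{d \in \N} \cD_L(d) = \varnothing$. Equivalently, one can bypass Proposition \ref{prop:1pos} and argue directly: for $Z = (Z_1,\dots,Z_n) \in \cD_L(d)$ and a unit vector $v \in \R^d$, the compression considered in the proof of Proposition \ref{prop:1pos} equals $L(v^*Z_1v,\dots,v^*Z_nv)$ and is positive semidefinite (being a compression of $L(Z)\succeq0$, using $v^*v=1$), which places $(v^*Z_1v,\dots,v^*Z_nv)$ in $S_L$; taking contrapositives gives the claim.

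I do not expect any real obstacle. The only point worth stating carefully is that the implication $S_L = \varnothing \Rightarrow \cD_L = \varnothing$ is not purely formal: it rests on applying Proposition \ref{prop:1pos} to a genuinely non-trivial $\ell$ — the vacuously nonnegative constant $-1$ — together with the elementary observation that $-I_d \not\succeq 0$ for every $d \geq 1$.
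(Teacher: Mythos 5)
Your proposal is correct and takes essentially the paper's intended route: the corollary is stated right after Proposition \ref{prop:1pos} precisely because it is meant to follow from it (or from the compression observation in its proof), exactly as you do by applying the proposition to $\ell=-1$ or, equivalently, by noting that $v^*Zv\in S_L$ for any $Z\in\cD_L(d)$ and unit vector $v\in\R^d$. The easy direction, $S_L=\cD_L(1)\subseteq\cD_L$, is also as expected, and you are right that Theorem \ref{thm:main1} plays no role here.
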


Since empty spectrahedra were thoroughly analyzed in  previous
sections, in the sequel we shall always {\it assume} $S_L\neq\varnothing$.
Moreover, we assume that there is $Z\in \cD_L$ with $L(Z)\neq0$.
Then, by compressing and translating,
we ensure 
\smallskip
\ben[\rm {({\bf Asmp})}]
\item
$0\neq A_0\succeq0$.
\een

\smallskip\noindent
If the interior of $S_L$ is non-empty, we could further reduce to 
strictly feasible linear pencils (cf.~\cite[Proposition 2.1]{hkm}), but this
is not the case we are interested in here.

\begin{lemma}
If $S_L$ is bounded, then $A_0,\ldots,A_n$ are linearly independent.
\end{lemma}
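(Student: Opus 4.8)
The plan is to prove the contrapositive: assuming $A_0,\ldots,A_n$ are linearly dependent, I will produce an unbounded curve inside $S_L$ (using the standing assumptions $S_L\ne\emptyset$ and, via {\rm(\textbf{Asmp})}, $0\neq A_0\succeq0$). So suppose there is a nonzero tuple $(c_0,c_1,\ldots,c_n)\in\R^{n+1}$ with $c_0A_0+c_1A_1+\cdots+c_nA_n=0$. First I would dispose of the case $c_0\neq 0$, where we may normalize $c_0=-1$, so $A_0=\sum_{i=1}^n c_iA_i$; then for any $x\in S_L$ and any $t>0$ we have $L(x+tc)=A_0+\sum_i(x_i+tc_i)A_i=L(x)+t\sum_i c_iA_i=L(x)+tA_0\succeq 0$, since $L(x)\succeq0$ and $A_0\succeq0$. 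Thus the ray $\{x+tc\mid t\geq 0\}$ lies in $S_L$, and it is a genuine ray (unbounded) because $c\neq 0$: indeed if the tuple $(c_1,\ldots,c_n)$ were zero then the relation would read $A_0=0$, contradicting {\rm(\textbf{Asmp})}. Hence $S_L$ is unbounded.

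Next I would treat the case $c_0=0$, so $\sum_{i=1}^n c_iA_i=0$ with $(c_1,\ldots,c_n)\neq 0$. Pick any $x\in S_L$ (nonempty by assumption). Then for every $t\in\R$ we have $L(x+tc)=L(x)+t\sum_{i=1}^n c_iA_i=L(x)\succeq 0$, so the entire line $\{x+tc\mid t\in\R\}$ is contained in $S_L$, and again $c\neq 0$ forces unboundedness. In either case we conclude $S_L$ is unbounded, which is the contrapositive of the assertion, completing the proof.

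The argument is essentially elementary once the standing assumptions are invoked; the only subtle point — which I expect is the ``main obstacle'' in the sense of being the step one must not skip — is the use of {\rm(\textbf{Asmp})} that $A_0\succeq 0$ (and $A_0\neq 0$) in the case $c_0\neq 0$, since without positive semidefiniteness of $A_0$ the direction $A_0$ need not preserve membership in $S_L$, and without $A_0\neq 0$ the relation $A_0=\sum c_iA_i$ with all $c_i=0$ would not yield a nonzero translation direction. I would make sure to state explicitly where these hypotheses enter. One could alternatively phrase the whole proof via the recession cone of the convex set $S_L$: a convex set is bounded iff its recession cone is $\{0\}$, and the computation above exhibits $c$ (resp.\ $\pm c$) in the recession cone of $S_L$; but the direct ``ray/line'' formulation above is cleaner and self-contained.
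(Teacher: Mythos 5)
Your proof is correct, and it is exactly the kind of elementary argument the paper has in mind (the paper's proof is the one-liner ``This is easy; or see \cite[Proposition 2.6(2)]{hkm}'', so there is no written argument to compare against). You correctly pinpoint the two places where the standing assumptions enter: $S_L\neq\emptyset$ is needed to have a base point $x$ for the ray, and $0\neq A_0\succeq0$ (Asmp) is needed both to guarantee $L(x)+tA_0\succeq0$ in the case $c_0\neq0$ and to rule out the degenerate dependence with $(c_1,\dots,c_n)=0$; without either assumption the statement is false (e.g.\ $S_L=\emptyset$, or $A_0=0$ with $S_L=\{0\}$). The recession-cone phrasing you mention at the end is the same argument in different clothing; the direct ray/line version you wrote is clean and complete.
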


\begin{proof}
This is easy; or see \cite[Proposition 2.6(2)]{hkm}.
\end{proof}

\def\cS{\mathscr S}
We now introduce  subspaces to be used in our considerations:
\bes
\cS_j = \Span \{A_{j,i}\mid i=0,\ldots,n\}
= \Span\{L_j(z)\mid z\in\R^n\}
\subseteq
S\R^{m_j\times m_j}.
\ees
We call $\cS_j=\cS (L_j)$ the (nonunital) \emph{operator system} associated
to the linear pencil $L_j$.
Conversely, to each linear subspace $\cS\subseteq S\R^{m\times m}$
we can associate a linear pencil $L$  with $\cS=\cS(L)$ by fixing a basis for
$\cS$. We can even enforce additional properties on $L$, see Proposition
\ref{prop:makeMeBounded} below.

The key tool in studying inclusions of matricial  spectrahedra
as in \eqref{eq:bind1} is the linear map
$\tau: \cS(L_1) \to \cS(L_2)$ we now define.

\begin{definition}\label{def:tau}
Let
$L_1,L_2$ be linear pencils as in \eqref{eq:penc12}.
 If
$A_{1,0},\dots,A_{1,n}$
are linearly independent
$($e.g.~$S_{L_1}$ is bounded$)$,
 we define the linear map
$$\tau : \cS_1 \to \cS_2 , \quad A_{1,i}\mapsto A_{2,i}\quad(i=0,\dots,n).$$
\end{definition}

We shall soon see that,
assuming \eqref{eq:bind1},  $\tau$ has a property called complete positivity, 
which we now introduce.
Let $\cS_j\subseteq S\R^{m_j\times m_j}$ be linear subspaces
and $\ph:\cS_1\to\cS_2$ a linear map.
For $d\in\N$, $\phi$ induces the map 
$$\ph_d=I_d\otimes\ph:\R^{d\times d}\otimes\cS_1=\cS_1^{d\times d}
 \to\cS_2^{d\times d},
\quad M\otimes A \mapsto M \otimes \ph(A),
$$
called an \emph{ampliation} of $\ph$.
Equivalently,
$$
\ph_d\left( \begin{bmatrix} T_{11} & \cdots & T_{1d} \\
\vdots & \ddots & \vdots \\
T_{d1} & \cdots & T_{dd}\end{bmatrix}\right)
=\begin{bmatrix} \ph(T_{11}) & \cdots & \ph(T_{1d}) \\
\vdots & \ddots & \vdots \\
\ph(T_{d1}) & \cdots & \ph(T_{dd})\end{bmatrix}
$$
for $T_{ij}\in\cS_1$.
We say that $\ph$ is \emph{$d$-positive} if 
$\ph_d$ is a positive map, i.e., maps positive semidefinite matrices
into positive semidefinite matrices. If $\ph$ is $d$-positive for every $d\in\N$,
then $\ph$ is \emph{completely positive}.

\begin{remark}
If $\cS_1$ does not contain positive semidefinite matrices,
then every linear map out of $\cS_1$ is completely positive.
On the other hand, 
under the presence of positive definite  elements 
there is a well established theory of completely positive maps, cf.~\cite{Pau}.
In the absence of positive definite elements 
in $\cS_1$ the structure of completely positive maps out of $\cS_1$ seems
to be more
complicated, and our results from Section \ref{sec:ram} describe
(completely) positive maps $\cS_1\to\R$ in this case; see 
Subsection \ref{subsec:bla} below.
\end{remark}

\subsection{The map
$\tau$ is completely positive: LMI matricial domination}
\label{sec:tau}

The main result of this section is the equivalence of
$d$-positivity of the map $\tau$ 
from Definition \ref{def:tau} 
with the inclusion
$\cD_{L_1}(d)\subseteq\cD_{L_2}(d)$. 
As a corollary, $L_1$ matricially dominates $L_2$ if and only
if $\tau$ is completely positive.

\begin{theorem}[cf.~\protect{\cite[Theorem 3.5]{hkm}}]
 \label{thm:tau}
Let
\[
  L_j=A_{j,0}+\sum_{i=1}^n X_i A_{j,i}
    \in S\R\ac_1^{d_j\times d_j}, \quad j=1,2
\]
be linear pencils $($with $0\neq A_{1,0}\succeq0)$,
 and assume
the spectrahedron $S_{L_1}$ is bounded.
Let $\tau$ be the linear map given in Definition $\ref{def:tau}$.
\ben[\rm (1)]
\item
$\tau$ is $d$-positive if and only if $\cD_{L_1}(d)\subseteq
\cD_{L_2}(d)$;
\item
$\tau$ is completely positive if and only if
$\cD_{L_1}\subseteq\cD_{L_2}$.
\een
\end{theorem}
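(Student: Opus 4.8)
The plan is to obtain part~(2) immediately from part~(1): since $\cD_{L_j}(d)=\cD_{L_j}\cap(\SRdd)^n$, the inclusion $\cD_{L_1}\subseteq\cD_{L_2}$ is equivalent to having $\cD_{L_1}(d)\subseteq\cD_{L_2}(d)$ for every $d\in\N$, and $\tau$ is completely positive exactly when it is $d$-positive for every $d$; so (2) is just (1) quantified over~$d$. All the work is therefore in part~(1). First I would record the bookkeeping. Because $A_{1,0}=L_1(0)\succeq0$ we have $0\in S_{L_1}$, and since $S_{L_1}$ is bounded the matrices $A_{1,0},\dots,A_{1,n}$ are linearly independent (the lemma preceding Definition~\ref{def:tau}); hence $\tau$ is well defined and every symmetric $T\in\cS_1^{d\times d}$ has a unique representation $T=\sum_{i=0}^n M_i\otimes A_{1,i}$ with $M_i\in\SRdd$ (symmetrize an arbitrary tensor representation and use the linear independence), so that $\tau_d(T)=\sum_{i=0}^n M_i\otimes A_{2,i}$. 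Note that $L_j(Z)$ is precisely the element of $\cS_j^{d\times d}$ with $M_0=I_d$ and $M_i=Z_i$.

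The direction ``$\tau$ $d$-positive $\Rightarrow\cD_{L_1}(d)\subseteq\cD_{L_2}(d)$'' is then immediate: for $Z\in\cD_{L_1}(d)$ the matrix $L_1(Z)$ is a positive semidefinite element of $\cS_1^{d\times d}$, and $\tau_d(L_1(Z))=L_2(Z)$, which is positive semidefinite by $d$-positivity, so $Z\in\cD_{L_2}(d)$. For the converse, assume $\cD_{L_1}(d)\subseteq\cD_{L_2}(d)$ and let $T=\sum_i M_i\otimes A_{1,i}\succeq0$; the goal is $\tau_d(T)=\sum_i M_i\otimes A_{2,i}\succeq0$. The clean case is $M_0\succ0$: writing $M_0=P^2$ with $P\succ0$ and conjugating by $P^{-1}\otimes I_{d_1}$ one gets $T\succeq0\iff L_1(Z)\succeq0$ with $Z_i:=P^{-1}M_iP^{-1}\in\SRdd$; the hypothesis yields $L_2(Z)\succeq0$, and conjugating back by $P\otimes I_{d_2}$ turns this into $\tau_d(T)\succeq0$.

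To reach a general $M_0$ I would interpose two reductions, and this is where the real obstacle lies, since the conjugation trick needs the coefficient of $A_{1,0}$ in $T$ to be invertible, and a priori that coefficient need not even be positive semidefinite. First, $T\succeq0$ forces $M_0\succeq0$: compressing $T$ by $v\otimes I_{d_1}$ for arbitrary $v\in\R^d$ gives $\sum_i(v^*M_iv)A_{1,i}\succeq0$, a value of the scalar pencil $L_1$; if $v^*M_0v<0$ then $\sum_{i\ge1}(v^*M_iv)A_{1,i}\succeq|v^*M_0v|\,A_{1,0}\succeq0$, so for any $x\in S_{L_1}$ and $t\ge0$ the point $x+\frac{t}{|v^*M_0v|}(v^*M_1v,\dots,v^*M_nv)$ still lies in $S_{L_1}$; this unbounded ray contradicts boundedness of $S_{L_1}$ unless $v^*M_iv=0$ for all $i\ge1$, and then $v^*M_0v\cdot A_{1,0}\succeq0$ with $v^*M_0v<0$ and $0\ne A_{1,0}\succeq0$ is again impossible. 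Second, with $M_0\succeq0$ in hand, put $T_\ep:=T+\ep\,(I_d\otimes A_{1,0})=(M_0+\ep I_d)\otimes A_{1,0}+\sum_{i\ge1}M_i\otimes A_{1,i}$ for $\ep>0$; then $T_\ep\succeq T\succeq0$ and its coefficient $M_0+\ep I_d$ of $A_{1,0}$ is positive definite, so the clean case applies and gives $\tau_d(T_\ep)=\tau_d(T)+\ep\,(I_d\otimes A_{2,0})\succeq0$. Letting $\ep\to0$ and using closedness of the positive semidefinite cone yields $\tau_d(T)\succeq0$, which finishes part~(1) and hence the theorem. Apart from this singular/indefinite-coefficient case, which the boundedness hypothesis and the perturbation $T\mapsto T+\ep(I_d\otimes A_{1,0})$ together resolve, everything is routine linear algebra.
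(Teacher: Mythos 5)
Your proposal is correct and takes essentially the same route as the paper: establish that the coefficient $M_0$ of $A_{1,0}$ in a positive semidefinite element of $\cS_1^{d\times d}$ must itself be positive semidefinite (the paper isolates this as Lemma \ref{lem:bounded34}; you prove it directly by compressing with $v\otimes I_{m_1}$ and using boundedness of $S_{L_1}$, while the paper compresses with the projection $P_v\otimes I_{m_1}$ and invokes boundedness of $\cD_{L_1}$, which it has shown equivalent), then perturb $M_0\mapsto M_0+\ep I_d$ to make it invertible, conjugate by $(M_0+\ep I_d)^{-1/2}\otimes I$ to land in $\cD_{L_1}(d)$, apply the inclusion hypothesis, conjugate back, and let $\ep\to0$. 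The only cosmetic difference is that your coefficient-nonnegativity argument is carried out at the scalar level and is therefore slightly more self-contained.
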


We point out that in \cite{hkm} this theorem has been given 
under an additional assumption: the linear pencil $L_1$
was assumed to be strictly feasible.

To prove the theorem we start with a lemma.

\begin{lemma}\label{lem:bounded34}
Let $L=A_0+\sum_{i=1}^n X_iA_i$ be a linear pencil of size $m$ with $0\neq A_0\succeq0$ defining a bounded spectrahedron $S_L$.
Then:
\begin{enumerate}[\rm (1)]
\item
  if
 $\Lambda\in \R^{d\times d}$ and $Z\in (\SRdd)^n,$
  and if
\beq\label{eq:T1}
S:= \Lambda\otimes A_0 + 
\sum_{i=1}^nZ_i\otimes A_i
\eeq
  is symmetric, then  $\Lambda =\Lambda^*$;
\item if $S\succeq0$, then
$\Lambda\succeq 0$.
\end{enumerate}
\end{lemma}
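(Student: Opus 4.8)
The plan is to prove the two items in turn; both are short, the main input being that the $Z_i$ are symmetric and that $A_0\neq0$.

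For (1) I would transpose \eqref{eq:T1}. Since each $Z_i=Z_i^*$ we get $S^*=\Lambda^*\otimes A_0+\sum_{i=1}^nZ_i\otimes A_i$, so the hypothesis $S=S^*$ forces $(\Lambda-\Lambda^*)\otimes A_0=0$. Writing $\Lambda-\Lambda^*=(\mu_{kl})_{k,l}$, the $(k,l)$ block of this tensor product is $\mu_{kl}A_0$; as $A_0\neq0$, every $\mu_{kl}$ vanishes, i.e.\ $\Lambda=\Lambda^*$. (Only $A_0\neq0$ enters here, not the stronger linear independence of $A_0,\dots,A_n$.)

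For (2), by (1) I may assume $\Lambda$ is symmetric and argue by contradiction. Suppose $\Lambda\not\succeq0$ and choose $w\in\R^d$ with $w^*\Lambda w<0$. Compressing $S$ by the map $w\otimes I_m\colon\R^m\to\R^d\otimes\R^m$ and using $S\succeq0$ together with the identity $(w\otimes I_m)^*(B\otimes A)(w\otimes I_m)=(w^*Bw)A$, I obtain
\[
0\preceq(w^*\Lambda w)\,A_0+\sum_{i=1}^n(w^*Z_iw)\,A_i.
\]
Put $t:=-w^*\Lambda w>0$ and $y^0:=t^{-1}\bigl(w^*Z_1w,\dots,w^*Z_nw\bigr)\in\R^n$. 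Dividing by $t$ gives $\sum_{i=1}^ny^0_iA_i\succeq A_0$, hence, since $A_0\succeq0$,
\[
L(\lambda y^0)=A_0+\lambda\sum_{i=1}^ny^0_iA_i\succeq(1+\lambda)A_0\succeq0\qquad(\lambda\ge0),
\]
so the whole ray $\{\lambda y^0\mid\lambda\ge0\}$ lies in $S_L$. If $y^0\neq0$ this ray is unbounded, contradicting boundedness of $S_L$. If $y^0=0$, then $0=\sum_iy^0_iA_i\succeq A_0$, so $A_0\preceq0$ and therefore $A_0=0$, contradicting the hypothesis. Either way we reach a contradiction, so $\Lambda\succeq0$.

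I expect no real obstacle: the argument is essentially routine once the compression is set up. The one spot to check carefully is that conjugating the block pencil $S$ by $w\otimes I_m$ exactly substitutes the scalars $w^*\Lambda w,w^*Z_1w,\dots,w^*Z_nw$ for the coefficients $\Lambda,Z_1,\dots,Z_n$; and the genuinely new hypothesis, boundedness of $S_L$, is used only at the very end to discard the ray, the degenerate subcase $y^0=0$ being exactly what makes us invoke $A_0\neq0$.
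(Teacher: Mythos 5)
Your proof is correct, and item (1) is word-for-word the paper's argument. For item (2) the underlying idea is the same compression-plus-unbounded-ray argument, but the implementations differ in a way worth noting. The paper compresses $S$ by the orthogonal projection $P_v\otimes I_m$ (with $P_v=vv^*$ the rank-one projection onto $\R v$), obtaining a matrix tuple $Y=\bigl((v^*Z_iv)P_v\bigr)_{i=1}^n$ and the ray $\{tY:t>0\}\subseteq\cD_L$; it then invokes the equivalence (proved just before the lemma in the paper) that boundedness of $S_L$ forces boundedness of the matricial spectrahedron $\cD_L$. You instead compress by the rectangular map $w\otimes I_m\colon\R^m\to\R^d\otimes\R^m$, which collapses $\Lambda,Z_1,\dots,Z_n$ directly to the scalars $w^*\Lambda w,\,w^*Z_iw$, so your unbounded ray lives in $S_L\subseteq\R^n$ itself. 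This is slightly more self-contained — no appeal to the $\cD_L$-versus-$S_L$ boundedness equivalence is needed — and is arguably the more transparent route, since the paper's $Y_i$ are all scalar multiples of the single rank-one projection $P_v$, so the matricial detour carries no extra information. Both proofs dispose of the degenerate case (your $y^0=0$, the paper's $\sum_iY_i\otimes A_i=0$) by using $A_0\neq0$ and $A_0\succeq0$, exactly as the hypothesis demands.
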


\begin{proof}
 To prove item (1), suppose $S$ is symmetric. Then
$
0=S-S^* = (\Lambda-\Lambda^*)\otimes A_0.
$
Since $A_0\neq0$, $\Lambda=\Lambda^*$.

For (2),
  if $\Lambda \not \succeq 0$, then there is a unit vector $v$ such that
$v^*\La v < 0$.
 Consider the orthogonal projection $P\in S\R^{dm\times dm}$ from $\R^d\otimes\R^m$ onto $\R v \otimes \R^m$, and
let 
$Y=((v^*Z_iv)P_v)_{i=1}^n\in(S\R^{d\times d})^n$. Here $P_v\in S\R^{d\times d}$ is the orthogonal projection from $\R^d$ onto $\R v$. Note that
$P=P_v\otimes I_m$. Then the compression
$$
PSP= P( \Lambda\otimes A_0 +\sum_{i=1}^nZ_i\otimes A_i)P
     = (v^*\La v)P_v\otimes A_0  +\sum_{i=1}^nY_i\otimes A_i \succeq 0,
$$
which says that  $0\neq\sum_{i=1}^nY_i\otimes A_i\succeq 0$ since $0\neq A_0\succeq0$ and $v^*\La v<0$.
This implies $0\neq t Y\in \cD_L$ for all $t>0$;
contrary to $\cD_L$ being bounded.
\end{proof}

\begin{proof}[Proof of Theorem {\rm\ref{thm:tau}}]
In both of the statements, the direction $(\Rightarrow)$ is
obvious. We focus on the converses.

  Fix $d\in\N$. Suppose $S\in\cS_1^{d\times d}$ is positive semidefinite.
Then it is of the form \eqref{eq:T1} for some
$\Lambda\in\R^{d\times d}$ and $Z\in(\SRdd)^d$:
 \[
   S=\Lambda\otimes A_{1,0}  +\sum_{i=1}^n Z_i\otimes A_{1,i} 
         \succeq 0.
 \]
By Lemma \ref{lem:bounded34}, $\Ga\succeq0$.
If we replace $\Lambda$ by $\Lambda+\ep I$ for some $\ep>0$, the resulting
$S=S_\ep$ is still in $\cS_1^{d\times d}$, so without loss of generality
we may assume $\Lambda\succ0$.
   Hence,
 \[
   ( \Lambda^{-\frac12}\otimes I) S( \Lambda^{-\frac12}\otimes I) 
    =  I \otimes A_{1,0} +\sum_{i=1}^n(\Lambda^{-\frac12} Z_i
          \Lambda^{-\frac12}) \otimes A_{1,i}  \succeq 0.
 \]
  Since $\cD_1(d)\subseteq\cD_2(d)$, this implies
 \[
      I \otimes A_{2,0}+   \sum_{i=1}^n (\Lambda^{-\frac12} Z_i
          \Lambda^{-\frac12}) \otimes A_{2,i}\succeq 0.
 \]
   Multiplying on the left and right by $ \Lambda^{\frac12}\otimes I$
   shows
 \[
\tau(S_\ep)=
   \Lambda\otimes A_{2,0}+\sum_{i=1}^n   Z_i \otimes A_{2,i}         \succeq 0.
 \]
  An approximation argument now
  implies that if $S\succeq 0$, then $\tau(S)\succeq 0$ and
  hence $\tau$ is $d$-positive proving (1). Item (2) follows immediately.
\end{proof}

We next use the complete positivity of $\tau$ under the assumption
of LMI matricial domination to give an algebraic characterization of
linear pencils $L_1$ and $L_2$ producing an inclusion $\cD_{L_1}\subseteq\cD_{L_2}$.

\begin{corollary}[Linear Positivstellensatz, 
cf.~\protect{\cite[Theorem 1.1]{hkm}}]
 \label{cor:tau}
Let $L_1$ and $L_2$ be linear pencils of sizes $m_1$ and $m_2$, respectively. Assume that $S_{L_1}$ is bounded
and $\cS_1$ contains a positive definite matrix
$($e.g.~$L_1$ is strictly 
feasible$)$.
Then the following are equivalent:
\ben[\rm (i)]
\item
$\cD_{L_1}\subseteq\cD_{L_2}$;
\item
there exist $\mu\in\N$ and $V\in\R^{\mu m_1\times m_2}$ such that
\beq\label{eq:linPos}
L_2= V^* \big( I_\mu \otimes L_1 \big) V.
\eeq
\een
\end{corollary}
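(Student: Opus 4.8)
The plan is to get the easy implication (ii)$\Rightarrow$(i) by a tensor manipulation, and the substantive implication (i)$\Rightarrow$(ii) by feeding the conclusion of Theorem \ref{thm:tau} into the classical dilation theory of completely positive maps.

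\emph{The implication} (ii)$\Rightarrow$(i). Assume $L_2 = V^*(I_\mu\otimes L_1)V$, i.e.\ $A_{2,i}=V^*(I_\mu\otimes A_{1,i})V$ for $i=0,\dots,n$. Fix $d\in\N$ and $Z=(Z_1,\dots,Z_n)\in\cD_{L_1}(d)$, so $L_1(Z)\succeq0$. Substituting these identities into $L_2(Z)$ and pulling out $I_d\otimes V$ gives
\[
L_2(Z)=(I_d\otimes V)^*\Big(I_d\otimes I_\mu\otimes A_{1,0}+\sum_{i=1}^n Z_i\otimes I_\mu\otimes A_{1,i}\Big)(I_d\otimes V).
\]
A permutation unitary swapping the first two tensor legs identifies the middle factor with $I_\mu\otimes L_1(Z)$, which is positive semidefinite. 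Hence $L_2(Z)\succeq0$, i.e.\ $Z\in\cD_{L_2}(d)$; as $d$ was arbitrary, $\cD_{L_1}\subseteq\cD_{L_2}$.

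\emph{The implication} (i)$\Rightarrow$(ii). By Theorem \ref{thm:tau}(2), the inclusion $\cD_{L_1}\subseteq\cD_{L_2}$ forces the map $\tau\colon\cS_1\to S\R^{m_2\times m_2}$ of Definition \ref{def:tau} to be completely positive. To apply the structure theory I first normalize so that $\cS_1$ is a \emph{unital} operator system: it is automatically $*$-closed since its elements are symmetric, and by hypothesis it contains a positive definite matrix $P$. Conjugating $L_1$ by $P^{-1/2}$ replaces $\cS_1$ by $P^{-1/2}\cS_1P^{-1/2}\ni I_{m_1}$, leaves $S_{L_1}$ (hence its boundedness and the linear independence of the $A_{1,i}$) unchanged, and changes $L_1(Z)$ only by a congruence, so $\cD_{L_1}$ and the complete positivity of $\tau$ are preserved; and if the conjugated pencil satisfies $L_2=\widetilde V^*(I_\mu\otimes\widetilde L_1)\widetilde V$ then the original one satisfies \eqref{eq:linPos} with $V:=(I_\mu\otimes P^{-1/2})\widetilde V$. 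So we may assume $I_{m_1}\in\cS_1$.

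Now $\cS_1$ is a unital operator system inside the matrix algebra $\R^{m_1\times m_1}$ and $\tau$ is completely positive, so by the (real version of) Arveson's extension theorem $\tau$ extends to a completely positive map $\Phi\colon\R^{m_1\times m_1}\to\R^{m_2\times m_2}$. By the Choi/Stinespring structure theorem for completely positive maps on full matrix algebras there are $\mu\in\N$ and $V_1,\dots,V_\mu\in\R^{m_1\times m_2}$ with $\Phi(A)=\sum_{k=1}^\mu V_k^*AV_k$; stacking $V:=\col(V_1,\dots,V_\mu)\in\R^{\mu m_1\times m_2}$ this reads $\Phi(A)=V^*(I_\mu\otimes A)V$. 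Since $\Phi$ agrees with $\tau$ on $\cS_1$ we have $\Phi(A_{1,i})=A_{2,i}$ for all $i$, so applying $\Phi$ to the pencil $L_1$ coefficientwise yields $L_2=\Phi(L_1)=V^*(I_\mu\otimes L_1)V$, which is \eqref{eq:linPos}. The only genuine work is the normalization turning $\cS_1$ into a unital operator system — this is exactly where the hypothesis that $\cS_1$ contains a positive definite matrix is used — after which the argument is a direct citation of the classical dilation theorems; one should only take care to invoke their real (not complex) versions, which hold verbatim for finite-dimensional real matrix algebras, and which in fact give the explicit bound $\mu\le m_1m_2$ not required by the statement.
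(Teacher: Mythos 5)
Your proof is correct and follows essentially the same route as the paper: reduce to complete positivity of $\tau$ via Theorem~\ref{thm:tau}, invoke Arveson's extension theorem, then dilate the resulting completely positive map on the full matrix algebra to obtain \eqref{eq:linPos}. The only differences are cosmetic: you spell out the conjugation by $P^{-1/2}$ that renders $\cS_1$ unital before applying Arveson (the paper leaves this implicit behind its parenthetical ``Caution''), and you cite Choi's structure theorem directly where the paper uses Stinespring's theorem together with the fact that every unital $*$-representation of $\R^{m_1\times m_1}$ is a multiple of the identity.
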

Before turning to the proof of the 
corollary, we pause for a remark.

\begin{remark}
Equation  \eqref{eq:linPos} can be equivalently written
as 
\beq\label{eq:linPss}
L_2=\sum_{j=1}^\mu V_j^* L_1V_j,
\eeq
where
$V_j\in\R^{m_1\times m_2}$ and
$V=
\text{col}(V_1,\ldots,V_\mu)$.
Moreover, $\mu$ can be uniformly bounded 
by Choi's characterization
\cite[Proposition 4.7]{Pau}
of completely positive maps between matrix algebras. In fact,
$\mu\leq m_1 m_2$.
\end{remark}

\begin{proof}[Proof of Corollary {\rm\ref{cor:tau}}]
The proof of the nontrivial implication (i) $\Rightarrow$ (ii) 
proceeds as follows. First invoke 
Arveson's extension theorem \cite[Theorem 7.5]{Pau} to extend
$\tau$ from Definition \ref{def:tau} to a completely positive map $\psi:\R^{m_1\times m_1}\to\R^{m_2\times m_2}$.
(\emph{Caution}: this is where
the existence of a positive definite matrix in  $\cS_1$ is used.)
Then 
apply the Stinespring
representation theorem \cite[Theorem 4.1]{Pau} to obtain
\beq\label{eq:taupi}
\psi(a)= V^* \pi(a) V,\quad a\in\R^{ m_1\times m_1}
\eeq
for some unital
$*$-representation $\pi:\R^{m_1\times m_1}\to\R^{m_3\times m_3}$
and $V:\R^{m_1}\to\R^{m_3}$.
By dissecting the proof (or see \cite[pp.~12-14]{hkm})
we see that $\pi$ is 
(unitarily equivalent to) a multiple of the identity representation, i.e.,
$\pi(a)= I_\mu\otimes a$ for some $\mu\in\N$ and all $a\in\R^{m_1\times m_1}$.
Hence
\eqref{eq:taupi} implies \eqref{eq:linPos}.
\end{proof}

\subsection{Examples and concluding remarks}\label{subsec:bla}

\begin{example}\label{ex:za}
Let us revisit Example \ref{new2} to show that the existence of a positive
definite matrix in $\cS_1$ is needed for Corollary \ref{cor:tau}
to hold.
Consider $$L_1= \begin{bmatrix} 1 & X \\ X & 0\end{bmatrix},
\quad L_2=X.$$
Then $S_{L_1}=\{0\}$ and $\cD_{L_1}=\{0\}$. 
Hence obviously $\cD_{L_1}\subseteq\cD_{L_2}$.
However, $L_2$ does not admit a representation of the form
\eqref{eq:linPss}.
Indeed, such a certificate is equivalent to $X\in C_{L_1}$; but
it was already pointed out that $X$ is not even a member of $M_{L_1}$.
\end{example}

We finish this paper by showing how our Theorem~\ref{indsos}
(cf. Theorem~\ref{sosram}) pertains to positive maps $\cS\to\R$ for a linear
subspace $\cS\subseteq S\R^{m\times m}$.
For this we start by associating to $\cS$ a linear pencil  $L$
with $\cS(L)=\cS$ and bounded $S_L$.

\begin{prop}\label{prop:makeMeBounded}
Let $\cS\subseteq S\R^{m\times m}$ be a linear subspace 
of dimension $n+1\geq2$
containing a nonzero positive semidefinite matrix. 
Then there exist $A_0,A_1,\ldots, A_n\in\cS$ such that
\ben[\rm (1)]
\item $A_0\succeq0$;
\item $\cS=\Span\{A_0,\ldots,A_n\}$;
\item 
for the linear pencil $L:=A_0+X_1 A_1+\cdots+X_n A_n$ the
spectrahedron $S_L$ is bounded.
\een
\end{prop}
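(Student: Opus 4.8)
The goal is to show that a finite-dimensional subspace $\cS \subseteq S\R^{m\times m}$ of dimension $n+1 \geq 2$ containing a nonzero positive semidefinite matrix can be presented as the operator system $\cS(L)$ of a linear pencil $L$ whose spectrahedron $S_L$ is bounded. The plan is to first arrange a good choice of $A_0$, then pick a complementary basis $A_1, \dots, A_n$ and ``tilt'' it by adding a suitable multiple of $A_0$ so that boundedness is forced.

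\emph{Step 1: Choosing $A_0$.} By hypothesis there is a nonzero positive semidefinite matrix in $\cS$; call it $A_0$. This gives (1). The point of this choice is that $A_0 \succeq 0$, $A_0 \neq 0$, so in any evaluation $L(x)$, the summand $A_0$ contributes nonnegatively at $x=0$: indeed $L(0) = A_0 \succeq 0$, so $0 \in S_L$ and in particular $S_L \neq \emptyset$. This will let us invoke the non-empty case when needed. Note also that if $A_0 \succ 0$ we are essentially in the strictly feasible situation and could even quote \cite[Proposition 2.1]{hkm}, but we want the general statement, so we argue directly.

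\emph{Step 2: A first basis and the key tilt.} Extend $A_0$ to a basis $A_0, B_1, \dots, B_n$ of $\cS$; then (2) holds for $A_0, B_1, \dots, B_n$ and remains true after we replace each $B_i$ by $A_i := B_i + t_i A_0$ for any scalars $t_i$, since such a change is invertible and fixes $\Span\{A_0,\dots,A_n\} = \cS$. The idea is to choose the $t_i$ large enough that $A_0 + \sum_i x_i A_i = A_0 + \sum_i x_i B_i + (\sum_i t_i x_i) A_0$ can only be positive semidefinite when $x$ is bounded. Concretely: pick a unit vector $v$ with $A_0 v \neq 0$ and $v^* A_0 v =: \alpha > 0$ (possible since $A_0 \succeq 0$, $A_0 \neq 0$). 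For the pencil $L := A_0 + \sum_i X_i A_i$ with $A_i = B_i + t_i A_0$, a point $x \in S_L$ satisfies $v^* L(x) v \geq 0$, i.e.
\[
\alpha + \sum_{i=1}^n x_i\, v^*B_i v + \Big(\sum_{i=1}^n t_i x_i\Big)\alpha \geq 0.
\]
If the $t_i$ were chosen so that $\sum_i t_i x_i$ is ``comparable to'' $-\|x\|$ on the part of $S_L$ we cannot yet control, this gives a linear inequality bounding $x$ from one side; but one inequality only bounds a half-space, so we need more vectors. The honest way to run this is to note $S_L$ is closed and convex with $0 \in S_L$, and to argue by contradiction that if $S_L$ were unbounded it would contain a ray $\{sw : s \geq 0\}$ for some nonzero $w$; along such a ray, $A_0 + s\sum_i w_i B_i + s(\sum_i t_i w_i) A_0 \succeq 0$ for all $s \geq 0$ forces (dividing by $s$ and letting $s\to\infty$) $\sum_i w_i B_i + (\sum_i t_i w_i) A_0 \succeq 0$, i.e. a nonzero element $C_w := \sum_i w_i A_i \succeq 0$ of the ``leading'' part. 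So it suffices to choose the $t_i$ so that \emph{no} nonzero combination $\sum_i w_i A_i$ is positive semidefinite --- equivalently, so that the only direction of recession of $S_L$ is $0$.

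\emph{Step 3: Killing recession directions by a generic tilt.} Here is where the main work lies. We want $t = (t_1,\dots,t_n)$ such that for every $w \in \R^n \setminus \{0\}$, the matrix $\sum_i w_i B_i + (\sum_i t_i w_i) A_0$ is not positive semidefinite. Equivalently, writing $C_w := \sum_i w_i B_i$, we need: for all $w \neq 0$, $C_w + (t\cdot w) A_0 \not\succeq 0$. Fix $w$. If $C_w + c A_0 \succeq 0$ for some real $c$, then since $A_0 \succeq 0$ the set of such $c$ is an upward-closed half-line $[c_0(w), \infty)$ or is empty or all of $\R$; it is all of $\R$ only if $C_w \succeq 0$ and $C_w$ shares... actually $C_w + cA_0 \succeq 0$ for all $c$ would force, letting $c \to -\infty$ on the eigenvector realizing the min, that $A_0 = 0$ on $\ker$-type directions — in any case one checks the set of bad $c$ is either empty (for some $w$, good) or a proper half-line $[c_0(w),\infty)$. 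So a tilt $t$ works iff $t \cdot w < c_0(w)$ for all $w$ with the bad set nonempty. By homogeneity we may restrict $w$ to the unit sphere; $c_0$ is then a continuous (in fact the epigraph is closed) real-valued function on a compact set where it is finite, hence bounded, so choosing $\|t\|$ large in a suitable single direction — or simply $t \cdot w$ very negative for all unit $w$, impossible for a \emph{linear} functional — shows the subtlety: a linear $t\cdot w$ cannot be $< c_0(w)$ for \emph{all} unit $w$ and its antipode simultaneously unless $c_0(w) + c_0(-w) > 0$ for all $w$. This is exactly the condition that $S_L$ (for \emph{any} tilt) has no line: if $C_w + c_0(w)A_0 \succeq 0$ and $C_{-w} + c_0(-w)A_0 = -C_w + c_0(-w)A_0 \succeq 0$, adding gives $(c_0(w)+c_0(-w))A_0 \succeq 0$, automatic — so this is not automatic and \emph{can genuinely fail}, corresponding to $\cS$ containing a whole line of positive semidefinite matrices through $A_0$ direction.

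\emph{The main obstacle}, then, is precisely this: the construction as stated in Step 2 only works if $\cS$ has no ``hidden'' two-sided unboundedness, and one must handle the degenerate case. I expect the correct fix (and the heart of the authors' argument) is a preliminary reduction: pass to the compression of all of $\cS$ by the projection onto the orthogonal complement of $\bigcap\{\ker A : A \in \cS,\ A \succeq 0,\ \text{and } -A \text{ "compatible"}\}$ — more cleanly, first quotient out the largest subspace $U \subseteq \R^m$ such that $Au \perp$ everything appropriately, reducing to the case where $\cS$ contains $A_0 \succeq 0$ with, after a further change, enough rigidity that $\{w : C_w + c A_0 \succeq 0 \text{ for some } c\}$ together with its negative spans nothing but forces boundedness for a generic large tilt. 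So the plan is: (i) take $A_0 \succeq 0$ nonzero; (ii) reduce (by compressing $\cS$, which does not change $n$ if done carefully, or absorbing the kernel) to the case where the only $w$ with $\pm C_w + (\text{mult of }A_0) \succeq 0$ is $w = 0$; (iii) then \emph{any} choice $A_i := B_i + t_i A_0$ already has bounded $S_L$, or a mild generic perturbation does, by the recession-cone argument of Step 2–3. I would write up (i) precisely, flag (ii) as the delicate compression step — the main obstacle — and then (iii) is the short contradiction argument with recession rays already sketched above.
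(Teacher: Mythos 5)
Your high-level plan — pick some $A_0\succeq0$, pick complementary directions $A_1,\dots,A_n$, and rule out recession rays of $S_L$ — is on the right track, and your reduction of boundedness to ``no $w\neq0$ with $\sum_i w_iA_i\succeq0$'' via recession rays is correct. But the proposal has a genuine gap exactly where you flag the ``main obstacle'': you never actually produce the tilts $t_i$, and your speculative fix (compress $\cS$ / quotient by a kernel) is not what is needed and is not carried out. Worse, the compatibility worry you raise about $c_0(w)+c_0(-w)$ being nonpositive is in fact unfounded: if $C_w+c_0(w)A_0\succeq0$ and $-C_w+c_0(-w)A_0\succeq0$ with $c_0(w)+c_0(-w)=0$, then $C_w+c_0(w)A_0=0$, forcing $w=0$ by linear independence of $A_0,B_1,\dots,B_n$, so the two one-sided constraints are always compatible. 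The genuinely missing step is the simultaneous choice of $t$ for all $w$. Your sketch can be completed by a clean separation argument: the set $K:=\{(c,w)\in\R\times\R^n: cA_0+\sum_iw_iB_i\succeq0\}$ is a pointed closed convex cone (pointed, again, by linear independence), so its dual has nonempty interior; pick $(a,b)$ strictly positive on $K\setminus\{0\}$. Since $(1,0)\in K$ we get $a>0$, and $t:=-b/a$ makes the hyperplane $\{c=t\cdot w\}$ meet $K$ only at the origin, i.e., the recession cone of $S_L$ is trivial. You do not do this, so the argument is incomplete as written.

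The paper's own route is different and avoids any separation argument or tilting. It chooses $A_0$ of \emph{maximum rank} among positive semidefinite matrices in $\cS$ — a step you never consider — and then takes $A_1,\dots,A_n$ to be an orthogonal basis (with respect to the trace inner product) of the orthogonal complement of $\R A_0$ in $\cS$. Writing $A_0=\begin{bmatrix}I_s&0\\0&0\end{bmatrix}$, a short Schur-complement computation shows: if $\langle A_0,A_1\rangle=0$ and $A_0+\la A_1\succeq0$ for all $\la\geq0$, then $A_1=0$ (the off-diagonal block must vanish, and the lower-right block must vanish by rank-maximality of $A_0$). This immediately rules out recession rays: an unbounded $S_L$ yields a unit direction $x$ with $A_0+t\sum_ix_iA_i\succeq0$ for all $t\geq0$, where $\sum_ix_iA_i\perp A_0$, hence $\sum_ix_iA_i=0$, contradicting independence. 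The maximum-rank choice plus trace-orthogonality is the key idea the proposal is missing.
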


\begin{proof}
We will use the trace inner product
\[
(A,B) \mapsto \langle A,B\rangle:= \tr(AB)
\]
on $S\R^{m\times m}$.

Let $A_0$ be a maximum rank positive semidefinite matrix in $\cS$.
Without loss of generality we may assume
\[
A_0= \begin{bmatrix} I_s & 0 \\ 0 & 0_{m-s} \end{bmatrix}
\]
for some $1\leq s\leq m$.

\smallskip
{\bf Claim.} If for some $A_1= \begin{bmatrix} A_{11} & A_{12} \\ A_{12}^* & A_{22}\end{bmatrix}\in\cS$ with $A_{11}\in S\R^{s\times s}$ we have
$\langle A_0, A_1\rangle=0$ and
\beq\label{eq:block2}
A_0 + \la A_1 = 
\begin{bmatrix} I_s+ \la A_{11} & \la A_{12} \\ \la A_{12}^* & \la A_{22}
\end{bmatrix}
\succeq 0 \quad\text{ for all } \quad \la\in\R_{\geq0},
\eeq
then 
$A_1=0$.

\emph{Explanation.}
Since $\langle A_0,A_1\rangle=0$, 
$\tr(A_{11})=0$. This means that either $A_{11}=0$ or $A_{11}$ has both positive and negative eigenvalues.
In the latter case, fix an eigenvalue $\mu<0$ of $A_{11}$.
Then for every $\la\in\R$ with $\la>-\mu^{-1}>0$, 
we have that $I_s+\la A_{11} \not\succeq0$, contradicting \eqref{eq:block2}.
So $A_{11}=0$. If $s=m$ we are done. Hence assume $s<m$.

Now
\beq\label{eq:preschur}
A_0+ \la A_1 = \begin{bmatrix} I_s  & \la A_{12} \\ \la A_{12}^* & \la A_{22}
\end{bmatrix}
\succeq 0
\eeq
for all $\la\in\R_{\geq0}$.
Using Schur complements, \eqref{eq:preschur}
is equivalent to
\bes
\la A_{22} - \la^2 A_{12}^*A_{12} \succeq0.
\ees
Hence $A_{22} - \la A_{12}^*A_{12} \succeq0$ for all $\la\in\R_{\geq0}$. 
Equivalently,
$A_{12}=0$ and $A_{22}\succeq0$.
If $A_{22}\neq0$, then $0 \preceq A_0+A_1 \in\cS$, and
$$s=\rank(A_0) < \rank(A_0+A_1),$$ contradicting the maximality of the rank of $A_0$.
\eop

\smallskip
Take an arbitrary orthogonal basis $A_0,A_1,\ldots, A_n$
 of $\cS$ containing $A_0$, and let
$L:=A_0+X_1 A_1+\cdots + X_n A_n$. 
We claim that $S_L$ is bounded.

Assume otherwise. 
Then there exists a sequence $(x^{(k)})_k$
in $\R^n$
such
  that $\|x^{(k)}\|=1$ for all $k$, and an increasing sequence
  $t_k\in\R_{>0}$ tending to $\infty$ such that
  $L(t_k x^{(k)})\succeq0$.  By convexity this implies $t_kx^{(r)}\in S_L$ for all $r\ge k$.
Without loss of generality we assume the sequence
   $(x^{(k)})_k$ converges to a vector $x=(x_1,\ldots,x_n)\in\R^n$.
Clearly, $\|x\|=1$.
For any $t\in\R_{\ge0}$, $tx^{(k)}\to tx$, and for $k$ big enough,
$tx^{(k)}\in S_{L}$ by convexity.
So $x$ satisfies $L(t x)\succeq 0$ for
all $t\in\R_{\geq0}$.
In other words,
\[ A_0 + t (x_1 A_1 + \cdots + x_n A_n) \geq0
\]
for all $t\in\R_{\geq0}$. But now the claim implies
$x_1A_1+\cdots+x_nA_n=0$, contradicting the linear independence of the $A_j$.
\end{proof}

\begin{lem}
Suppose $\cS\subseteq S\R^{m\times m}$ is a linear subspace and
$\varphi:\cS\to\R$ is a positive map. Then $\varphi$ is completely
positive.
\end{lem}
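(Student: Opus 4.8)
The plan is to realize the operator system $\cS$ as $\cS(L)$ for a linear pencil $L$ with \emph{bounded} spectrahedron $S_L$, and then to read off complete positivity of $\varphi$ from the LMI matricial domination theorem (Theorem~\ref{thm:tau}).

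First I would dispose of the degenerate cases. If $\cS$ contains no nonzero positive semidefinite matrix, then by the Remark preceding Subsection~\ref{sec:tau} \emph{every} linear map out of $\cS$ is completely positive, in particular $\varphi$. If $\dim\cS\le1$ one argues by hand: if $\cS\neq0$ we may write $\cS=\R A_0$ with $0\ne A_0\succeq0$ (replacing $A_0$ by $-A_0$ if needed); every symmetric element of $\cS^{d\times d}$ is $\Lambda\otimes A_0$ with $\Lambda=\Lambda^*$, it is positive semidefinite only if $\Lambda\succeq0$ (as $A_0$ has a positive eigenvalue), and then $\varphi_d(\Lambda\otimes A_0)=\varphi(A_0)\Lambda\succeq0$ because $\varphi(A_0)\ge0$. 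So from now on assume $\dim\cS=n+1\ge2$ and $\cS$ contains a nonzero positive semidefinite matrix.

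Now I would apply Proposition~\ref{prop:makeMeBounded} to pick a basis $A_0,A_1,\dots,A_n$ of $\cS$ with $0\ne A_0\succeq0$ such that $L:=A_0+\sum_{i=1}^nX_iA_i$ has bounded spectrahedron $S_L$. Set $\ell:=\varphi(A_0)+\sum_{i=1}^n\varphi(A_i)X_i\in\rx_1$, viewed as a linear pencil of size~$1$; then $\ell(x)=\varphi(L(x))$ for all $x$, so for $x\in S_L$ we have $L(x)\in\cS\cap\smpsd$ and hence $\ell(x)=\varphi(L(x))\ge0$. Thus $\ell\ge0$ on $S_L$, and Proposition~\ref{prop:1pos} upgrades this to $\ell|_{\cD_L}\succeq0$, i.e.\ $\cD_L\subseteq\cD_\ell$. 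Since $A_0,\dots,A_n$ form a basis, the map $\tau$ of Definition~\ref{def:tau} for the pair $(L,\ell)$ is defined and is nothing but $\varphi$ (it sends $A_i\mapsto\varphi(A_i)$); as $0\neq A_0\succeq0$ and $S_L$ is bounded, Theorem~\ref{thm:tau}(2) converts $\cD_L\subseteq\cD_\ell$ into complete positivity of $\tau=\varphi$.

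The step I expect to carry the weight is the one already packaged inside Theorem~\ref{thm:tau}: reducing an \emph{arbitrary} positive semidefinite $S=\Lambda_0\otimes A_0+\sum_{i\ge1}\Lambda_i\otimes A_i\in\cS^{d\times d}$ (with all $\Lambda_i$ symmetric, by linear independence of the $A_i$) to an element of $\cD_L(d)$. Boundedness of $S_L$ together with Lemma~\ref{lem:bounded34}(2) forces $\Lambda_0\succeq0$; conjugating $S+\ep(I_d\otimes A_0)$ by $(\Lambda_0+\ep I)^{-1/2}\otimes I$ produces an element of $\cD_L(d)$, on which $\ell|_{\cD_L}\succeq0$ can be invoked and then transported back, and letting $\ep\to0^+$ finishes. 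This is exactly the mechanism of the proof of Theorem~\ref{thm:tau}, so no genuinely new obstacle arises; if a self-contained proof is preferred, this last paragraph simply replaces the appeal to Theorem~\ref{thm:tau}.
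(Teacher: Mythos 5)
Your proof is correct, but it takes a genuinely different --- and considerably heavier --- route than the paper, which simply points to the standard operator-theoretic fact that a positive linear functional on an operator system is automatically completely positive (cf.\ Paulsen). You instead rebuild that fact from the paper's own apparatus: Proposition~\ref{prop:makeMeBounded} to manufacture a pencil $L$ with $\cS(L)=\cS$ and bounded $S_L$, Proposition~\ref{prop:1pos} to upgrade scalar nonnegativity of $\ell:=\varphi\circ L$ to $\cD_L\subseteq\cD_\ell$, and Theorem~\ref{thm:tau}(2) to convert that inclusion into complete positivity of $\tau=\varphi$. The chain is sound, your handling of the degenerate cases (no nonzero PSD element, and $\dim\cS\le1$) is correct, and there is no circularity since all three ingredients are proved earlier in the paper; indeed your argument essentially retraces the equivalences the authors themselves set up in the closing discussion of Section~\ref{sec:hkm}.

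It is worth seeing, though, why the paper's one-line citation is apt: because the codomain is $\R=S\R^{1\times1}$, the lemma has a short direct proof that needs none of the pencil machinery and none of the hypotheses on $\cS$ that you had to arrange for (no nonzero PSD element, no bounded spectrahedron). If $S=\sum_k M_k\otimes A_k\in\cS^{d\times d}$ is positive semidefinite with $M_k\in\R^{d\times d}$ and $A_k\in\cS$, then for every $v\in\R^d$ the compression $(v^*\otimes I_m)\,S\,(v\otimes I_m)=\sum_k(v^*M_kv)A_k$ is a positive semidefinite element of $\cS$, so positivity of $\varphi$ gives
$v^*\varphi_d(S)\,v=\sum_k(v^*M_kv)\,\varphi(A_k)=\varphi\bigl(\sum_k(v^*M_kv)A_k\bigr)\ge0$.
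Since $v$ was arbitrary, $\varphi_d(S)\succeq0$. Your route works, but it pays with the full weight of Theorem~\ref{thm:tau} (Arveson extension, Stinespring, etc.\ are hidden behind it) for something that follows from the scalar target alone via this one compression.
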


\begin{proof}
This is well-known and easy, cf.~\cite[Proposition 3.8 or Theorem 3.9]{Pau}.
\end{proof}

As seen in Example \ref{ex:za} 
there does not exist a clean linear certificate for LMI matricial domination
(or, equivalently, complete positivity). 
However, 
our Theorem \ref{indsos} yields
a nonlinear algebraic certificate for LMI (matricial)
domination $S_{L} \subseteq S_{f}$ in the case $f$ is a 
size $1$ linear pencil, i.e., $f\in\R\cx_1$. 
Equivalently, Theorem \ref{indsos} can be used to describe
(completely) positive maps $\tau:\cS\to\R$, where 
$\cS\subseteq S\R^{m\times m}$ is a(ny) linear subspace.

Indeed, suppose $\cS$ contains a nonzero positive semidefinite matrix 
(otherwise $\tau$ is automatically completely positive)
and apply Proposition \ref{prop:makeMeBounded}
to obtain a basis $A_0,\ldots, A_n$ of $\cS$, and 
the 
 linear pencil
\[
L := A_0 + X_1 A_1 + \cdots + X_n A_n
\]
of size $m$ with bounded $S_L$.
Define
\[
f:= \tau(A_0) + X_1 \tau(A_1) + \cdots + X_n \tau(A_n) \in \R\cx_1.
\]
Then, by Theorem \ref{thm:tau} (here is where the boundedness
of $S_L$ is used),
$\tau$ is completely positive if and only if 
$\cD_L\subseteq\cD_f$. By Proposition \ref{prop:1pos}
the latter is equivalent to $S_L\subseteq S_f$, i.e.,
$f|_{S_L}\geq 0$, and this is a situation
completely characterized by Theorem \ref{indsos}.

\end{document}